\documentclass[10pt]{article}
\usepackage{amsmath}
\usepackage{amssymb}
\usepackage{amsthm}
\usepackage{graphicx}
\usepackage{verbatim}
\usepackage{enumerate}
\usepackage{color}
\usepackage{xcolor}
\definecolor{cgreen}{rgb}{0.0,0.42.0.24}
\definecolor{cpurple}{rgb}{0.78,0,0.82}
\usepackage{mathrsfs}
\usepackage[colorlinks=true,linkcolor=blue,citecolor=blue,pdfpagelabels=false]{hyperref}
\usepackage{longtable}
\numberwithin{equation}{section}
\usepackage[english]{babel}
\usepackage{geometry,amsmath,amsthm,amssymb,latexsym,graphicx}
\usepackage{tikz}

\setlength{\evensidemargin}{-.05in}
\setlength{\oddsidemargin}{-.05in}
\setlength{\textwidth}{6.5in}
\setlength{\textheight}{8.5in}
\setlength{\topmargin}{-0.5in}
\setlength{\parskip}{10pt plus 2pt minus 1pt}
\setlength{\floatsep}{0pt}
\setlength{\parindent}{0pt}

\newcommand{\eqb}{\begin{equation}}
\newcommand{\eqe}{\end{equation}}

\theoremstyle{plain}
\newtheorem{thm}{Theorem}[section]

\newtheorem{lem}[thm]{Lemma}
\newtheorem{pr}[thm]{Proposition}
\newtheorem{cor}[thm]{Corollary}
\newtheorem{defn}[thm]{Definition}

\newtheorem{example}[thm]{Example}
\newtheorem{problem}{Problem}

\theoremstyle{remark}

\newtheorem*{unremark}{Remark}

\vfill

\allowdisplaybreaks

\font\elevenss=cmss11

\font\eightss=cmss8

\font\sixss=cmss8 at 6pt

\newfam\ssfam
\textfont\ssfam=\elevenss \scriptfont\ssfam=\eightss
 \scriptscriptfont\ssfam=\sixss
\def\ss{\fam\ssfam \elevenss}%

\def\N{\mathbb{N}}

\def\R{\mathbb{R}}

\def\C{\mathbb{C}}

\def\CC{{\cal C}}

\def\ee{\varepsilon}

\def\Cox{\hfill \Box}

\def\one{{\bf 1}}
\def\|{{\, | \, }}

\def\xx{{\bf x}}
\def\yy{{\bf y}}
\def\rr{{\bf r}}
\def\zz{{\bf z}}

\def\ww{{\bf w}}
\def\vv{{\bf v}}
\def\uu{{\bf u}}
\def\mm{{\bf m}}
\def\dd{\mathbf{d}}

\def\M{{\cal M}}

\def\cD{{\cal D}}
\def\D{{\cal D}}

\def\nbd{{\mathcal N}}
\def\grad{{\nabla}}

\def\slab{\mathtt{slab}}
\def\bottom{\mathtt{bottom}}
\def\zero{{\bf 0}}
\def\sing{{\mathcal V}}
\def\TT{{\bf T}}
\def\Ll{\mathbf{L}}
\def\rhat{{\hat{\rr}}}
\def\shat{\hat{\bf s}}

\def\Res{{\rm Res}\,}

\def\nbdhat{{\widehat{\nbd}}}
\def\amoeba{\mbox{\ss amoeba}}

\def\I{{\mathcal I}}
\def\homot{{\mathbf \Phi}}
\def\e{{\bf e}}
\def\singq{{\tilde{\sing}}}
\def\hq{{\tilde{h}}}
\def\hyper{{\mathtt H}}
\def\sph{\mathtt{S}}
\def\hyperr{{\mathcal H'}}

\def\sphh{{\mathcal S'}}
\def\north{{\mathfrak n}}
\def\nint{{\mathfrak m}}
\def\xt{{\bf x}}
\def\yt{{\bf y}}
\def\zt{{\bf z}}
\def\sign{{\rm sgn}\,}

\def\bH{{\bf H}}

\def\locus{\Sigma}

\def\leray{\mathtt{o}}
\def\disk{\bullet}
\def\chain{{\gamma}}
\def\Comp{\mathbb{C}}
\def\Real{\mathbb{R}}
\def\Int{\mathbb{Z}}
\def\WW{{\mathbf W}}
\def\bpt{\Xi}

\begin{document}

\begin{titlepage}

\begin{center}
{\LARGE \bf Asymptotics of multivariate sequences in the presence of a lacuna}
\footnote{SM and RP gratefully acknowledge the support and 
hospitality of the Erwin Schr{\"o}dinger Institute}
\end{center}

{\large

\noindent{\bf Abstract:}
We explain a discontinuous drop in the exponential growth rate for 
certain multivariate generating functions at a critical parameter
value, in even dimensions $d \geq 4$.  This result depends on 
computations in the homology of the algebraic variety where the 
generating function has a pole.  These computations are similar to, 
and inspired by, a thread of research in applications of complex 
algebraic geometry to hyperbolic PDEs, going back to Leray, Petrowski, 
Atiyah, Bott and G\"arding.  As a consequence, we give a topological 
explanation for certain asymptotic phenomenon appearing in the
combinatorics and number theory literature.  Furthermore, we show
how to combine topological methods with symbolic algebraic computation 
to determine explicitly the dominant asymptotics for such 
multivariate generating functions, giving a significant
new tool to attack the so-called connection problem for
asymptotics of P-recursive sequences.  This in turn enables the 
rigorous determination of integer coefficients in the Morse-Smale 
complex, which are difficult to determine using direct geometric
methods.
\vfill
\vfill

{\sc \small
Yuliy Baryshnikov,
University of Illinois,
Department of Mathematics,
273 Altgeld Hall 1409 W. Green Street (MC-382), 
Urbana, IL 61801, {\tt ymb@illinois.edu},
partially supported by NSF grant DMS-1622370. 

Stephen Melczer,
University of Waterloo,
Department of Combinatorics and Optimization,
200 University Avenue,
Waterloo, ON N2L 3G1, {\tt smelczer@uwaterloo.ca},
partially supported by an NSERC postdoctoral fellowship
and NSERC Discovery Grant. 

Robin Pemantle,
University of Pennsylvania,
Department of Mathematics,
209 South 33rd Street,
Philadelphia, PA 19104, {\tt pemantle@math.upenn.edu},
partially supported by NSF grant DMS-1612674. 
}

\noindent{\em Subject classification:} 05A16; secondary 57Q99.

\noindent{\em Keywords}:  analytic combinatorics, generating function, 
diagonal, coefficient extraction, Thom isomorphism, intersection cycle,
Morse theory.}
\end{titlepage}

\section{Introduction}

Let $k \geq 1$ be an integer, and for $P$ and $Q$ coprime polynomials 
over the complex numbers let
\begin{equation} \label{eq:F}
F(\zz) = \frac{P(\zz)}{Q(\zz)^k} 
   = \sum_{\rr\in\mathbb{Z}^d} a_\rr \zz^\rr = \sum_{\rr\in\mathbb{Z}^d} a_\rr z_1^{r_1} \cdots z_d^{r_d}
\end{equation}
be a rational Laurent series converging in some open domain
$\cD \subset \mathbb{C}^d$.  The field of Analytic Combinatorics 
in Several Variables
(ACSV) describes the asymptotic determination of the coefficients 
$a_\rr$ via complex analytic methods.  Let $\sing = \sing_Q$ denote 
the algebraic set $\{ \zz : Q(\zz) = 0 \}$ containing the singularities
of $F(\zz)$.  The methods of ACSV, summarized below,
vary in complexity depending on the nature of $\sing$.  When $\sing$ 
is a smooth manifold, for instance when $Q$ and $\grad Q$ do not vanish simultaneously,
explicit formulae may be obtained that are universal outside of cases
when the curvature of $\sing$ vanishes~\cite{PW1,PW-book}.  When $\sing$ is the
union of transversely intersecting smooth surfaces, similar residue
formulae hold~\cite{PW2,PW-book,BMP-morse}.  The next most difficult case
is when $\sing$ has an isolated singularity whose tangent cone is 
quadratic, locally of the form $x_1^2 - \sum_{j=2}^d x_j^2$.  
These points, satisfying the 
{\em cone point hypotheses}~\cite[Hypotheses~3.1]{BP-cones}, are called 
{\em cone points}; the necessary complex 
analysis in the vicinity of a cone point singularity, based on 
the work of~\cite{ABG}, is carried out in~\cite{BP-cones}.  

Let $|\rr|=|r_1|+\cdots+|r_d|$. In each of these cases, asymptotics may be found of the form 
\begin{equation} \label{eq:form}
a_\rr \sim C(\rhat) |\rr|^\beta \zz_* (\rhat)^{-\rr}
\end{equation}
where $C$ and $\zz_*$ depend continuously on the direction 
$\rhat := \rr / |\rr|$.  A very brief summary of the methodology 
is as follows.  The multivariate Cauchy integral formula gives
\begin{equation} \label{eq:cauchy}
a_\rr = \left ( \frac{1}{2 \pi i} \right )^d \int_T \zz^{-\rr} 
   F(\zz) \frac{d\zz}{\zz},
\end{equation}
where $T \subseteq \cD$ is a torus in the domain of convergence
and $d\zz / \zz$ is the logarithmic holomorphic volume form
$z_1^{-1} \cdots z_d^{-1} dz_1 \wedge \cdots \wedge dz_d$.
Expand the chain of integration $T$ so that it passes through the 
variety $\sing$, touching it for the first time at a point $\zz_*$ 
where the logarithmic gradient of $Q$ is normal to $\sing$, and  
continuing to at least a multiple $(1+\ee)$ times this polyradius.
Let $\I$ be the intersection with $\sing$ swept out by the homotopy
of the expanding torus.  The residue theorem, described 
in Definition~\ref{def:residue} below, says that the integral~\eqref{eq:cauchy} is
equal to the integral over the expanded torus plus the integral
of a certain residue form over $\I$.  Typically, $\zz^{-\rr}$ is
maximized over $\I$ at $\zz_*$, and integrating over $\I$ yields
asymptotics of the form~\eqref{eq:form}.   

In the case of an isolated singularity with quadratic tangent cone,
Theorem~3.7 of~\cite{BP-cones} gives such a formula but excludes the 
case where $d = 2m > 2k+1$ is an even integer and $d-1$ 
is greater than twice the power $k$ in the denominator of~\eqref{eq:F}.
In that paper the asymptotic estimate obtained is only 
$a_\rr = o(|\rr|^{-m} \zz_*^{-\rr})$ for all $m$, due to the 
vanishing of a certain Fourier transform.
This leaves open the question of what the correct asymptotics are,
and whether they are smaller by a factor exponential in $|\rr|$.

In~\cite{BMPS} it is shown via diagonal extraction that, for $k=1$ and 
a class of polynomials $Q$ with an isolated quadratic cone singularity,
in fact $a_{n, \ldots , n}$ has strictly smaller exponential order
than expected.  Diagonal extraction applies only
to coefficients of monomials precisely on the diagonal -- i.e., where every
variable has the same power -- leaving open the question 
of behavior in a neighborhood of the diagonal\footnote{To see why this is
important, consider the function $(x-y)/(1+x+y)$ that generates differences
of binomial coefficients ${i+j+1 \choose i} - {i+j+1 \choose j}$. The diagonal
coefficients where $i=j=n$ are zero but the growth of those nearby
approaches a constant times $n^{-1/2} 4^n$.}, 
and leaving open the question of whether this behavior holds 
beyond the particular class, for all polynomials with cone point
singularities.  The purpose of the present paper is to use ACSV methods
to show that indeed the behavior is universal for cone points,
to prove that it holds in a neighborhood of the diagonal, and to give a 
topological explanation.

\section{Main results and outline} \label{sec:results}

\subsection*{Main result}

Let $F,P,Q$ and $\{ a_\rr \}$ be as in~\eqref{eq:F}, choosing signs  
so that $Q(\zero) > 0$.  Throughout the paper we denote by $\Ll : \C_*^d
\to \R^d$ the coordinatewise log-modulus map
\begin{equation} \label{eq:Ll}
\Ll (\zz) := \log |\zz| 
   = \left ( \log |z_1| , \ldots , \log |z_d| \right ) \, .
\end{equation}
Let $\C_* := \C \setminus \{ 0 \}$ and
let $\M := \C_*^d \setminus \sing$ be the domain of holomorphy of 
$\zz^{-\rr} F(\zz)$ for sufficiently large $\rr$.  Let $\amoeba$
denote the amoeba of $Q$, defined by $\amoeba := \{ \Ll(\zz) :
\zz \in \sing \}$.  It is known~\cite{GKZ} that the components of
the complement of the amoeba are convex and correspond to
Laurent series expansions for $F$, each component being a 
logarithmic domain of convergence for one series expansion.
Let $B$ denote the component of $\amoeba^c$ such that the given series 
$\sum_\rr a_\rr \zz^\rr$ converges whenever $\zz = \exp (\xx + i \yy)$ 
with $\xx \in B$. 

We refer to the torus $T(\xx):=\Ll^{-1}(\xx)$ 
as the {\bf torus over} $\xx$.
For any $\rr \in \R^d$ we denote $\rhat := \rr / |\rr|$ and 
$$h_{\rhat} := - \sum_{j=1}^d \hat{r}_j \log |z_j| \, .$$
For a subset $A \subset \C^d$, when $\rhat$ and $\zz_*$ are understood,
we use the shorthand
\begin{equation} \label{eq:filtration}
A({-\ee}) := A \cap \{\zz : h_{\rhat} (\zz) < h_{\rhat} (\zz_*) - \ee \}
   \, .
\end{equation}

Assume that $\sing$ intersects the torus
$\{ \exp (\xx_* + i \yy): \yy \in (\R / (2\pi))^d \}$ at the unique 
point $\zz_* = \exp (\xx_*)$.  We will be dealing with the situation 
where $\sing$ has a quadratic singularity at $\zz_*$.  More specifically, 
we will assume that $Q$ has a real hyperbolic singularity at $\zz_*$.
\begin{defn}[quadric singularity]
\label{def:quadratic} 
We say that $Q$ has a {\bf real hyperbolic quadratic singularity} at 
$\zz_*$ if $Q(\zz_*)=0$, the gradient $\nabla Q(\zz_*)=0$, 
and the quadratic part $q_2$ of 
$Q=q_2(\zz)+q_3(\zz)+\ldots$ at $\zz_*$ is a real quadratic form of 
signature $(1,d-1)$; in other words, there exists a real linear coordinate 
change so that $q_2(\uu) = u_d^2 - \sum_{j=1}^{d-1} u_j^2 + O(|\uu|^3)$
for $\uu$ a local coordinate centered at $\zz_*$.
\end{defn}

We denote by $T_{\xx_*} (B)$ the open tangent cone in $\R^d$ to 
the component $B$ of $\amoeba (Q)^c$, namely all vectors
$\vv$ at $\xx_* := \Ll (\zz_*)$ such that $\xx_* + \ee \vv \in B$
for sufficiently small $\ee$.  The inequality defining $T_{\xx_*} (B)$
is the same as the inequality $\widetilde{Q} (\vv) > 0$ where $\widetilde{Q}$
is the leading (homogeneous quadratic) term of 
$Q (\exp (\xx_* + \vv + i \yy_*))$, along with an inequality 
specifying $T_{\xx_*} (B)$ rather than $- T_{\xx_*} (B)$.

\begin{defn}[tangent cone; supporting vector]
\label{def:tangent_cone}
The vector $\rr$ is said to be {\bf supporting} at $\zz_*$ if
$h_{\rr}$ attains its maximum on the closure of $B$ at $\xx_*$
and if $\{dh_\rr = 0 \}$ intersects the tangent cone $T_{\xx_*}(B)$
only at the origin.  The open convex cone of supporting vectors is
denoted $\nbd$ and the set of unit vectors over which it is a cone
is denoted $\nbdhat$.
\end{defn}

\begin{thm}[main theorem] \label{th:lacuna}
Let $P$ be holomorphic in $\Comp^d$, $Q$ a Laurent polynomial, $k$ a 
nonnegative integer, $B$ a component in the complement of the amoeba
of $Q$, and $\sum_{\rr \in E} a_\rr \zz^\rr$ the corresponding 
Laurent series expansion of $F = P/Q^k$. 

Suppose that $Q$ has real hyperbolic quadratic 
singularity at $\zz_* = \exp(\xx_*)$ such that $\xx_*$ belongs to
the boundary of $B$ and $\zz_*$ is the unique 
intersection of the torus $\TT(\xx_*)$ with $\sing$.

Let $K \subseteq \nbdhat$ be a compact set
and suppose that $d$ is even and $2k<d$.
\begin{enumerate}[(i)]
\item If $\ee>0$ is small enough then for any $\rhat\in K$ there exists 
a compact cycle $\Gamma(\rhat)$, of volume uniformly bounded in $\rhat$, such that for all $\rhat \in K$ the cycle 
$\Gamma(\rhat)$ is supported on $\M({-\ee})$ and
\begin{equation} \label{eq:exact}
a_\rr = \int_{\Gamma(\rhat)} \zz^{-\rr} \frac{P}{Q^k} \frac{d\zz}{\zz}.
\end{equation}
\item
If $P$ is a polynomial, then
\begin{equation} \label{eq:int chain}
a_\rr = \int_{\gamma(\rhat)} \Res_\sing \zz^{-\rr} \frac{P}{Q^k} \frac{d\zz}{\zz} 
\end{equation}
for all but finitely many $\rr \in E$, where 
$\gamma(\rhat)$ is a compact $(d-1)$ cycle in $\sing(-\ee)$ 
of volume uniformly bounded in $\rhat$ and $\Res_\sing$ is 
the residue operator defined below in Section \ref{sec:preliminaries}.
\end{enumerate}
\end{thm}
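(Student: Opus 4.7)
The strategy is to start from the Cauchy representation \eqref{eq:cauchy} on a torus $T(\xx_0)$ with $\xx_0 \in B$ close to $\xx_*$, and then to deform this torus down through the critical height $h_{\rhat}(\zz_*)$ to a compact cycle $\Gamma(\rhat)$ contained in $\M(-\ee)$. The identity \eqref{eq:exact} then amounts to the assertion that no residual contribution is produced when the cycle crosses the cone point $\zz_*$; this is exactly the lacuna phenomenon, namely that the local obstruction class near $\zz_*$ integrates to zero against $\zz^{-\rr}(P/Q^k)\,d\zz/\zz$ precisely when $d$ is even and $2k<d$.

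Away from $\zz_*$ the deformation is routine. Since $\rhat \in \nbdhat$ is supporting and $\zz_*$ is the unique intersection of $T(\xx_*)$ with $\sing$, a downward gradient-like flow for $h_{\rhat}$, lifted through $\Ll$, carries $T(\xx_0)$ into $\M(-\ee)$ outside a small neighborhood $U$ of $\zz_*$; the flow can be chosen continuously in $\rhat \in K$, so the resulting chain has uniformly bounded volume. All of the topology that prevents pushing $T(\xx_0)$ entirely into $\M(-\ee)$ is concentrated in $U$ and is encoded by a relative class in $H_d(U \cap \M,\, U \cap \M(-\ee))$. Using the cone-point analysis of \cite{BP-cones} (itself modeled on \cite{ABG}), this local class can be represented by an explicit cycle whose pairing with $\zz^{-\rr}P/Q^k\, d\zz/\zz$ reduces, up to smooth nonvanishing factors, to the Fourier--Leray transform of $1/\widetilde{Q}^k$ in the direction $\rhat$, where $\widetilde{Q}$ is the hyperbolic quadratic form of Definition~\ref{def:quadratic}. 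This Fourier--Leray integral is exactly the fundamental solution of the iterated wave-type operator $\widetilde{Q}(\partial)^k$ in the direction $\rhat$, and the classical Petrowski--Atiyah--Bott--G{\aa}rding lacuna criterion says it vanishes on the propagation cone when $d$ is even and $2k<d$. Feeding this vanishing into the relative Morse decomposition produces the desired $\Gamma(\rhat)$.

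Part (ii) is obtained by applying $\Res_\sing$ from Section~\ref{sec:preliminaries} to the cycle constructed in part (i). Retracting $\Gamma(\rhat)$ onto a tubular neighborhood of $\sing \cap \{h_{\rhat} < h_{\rhat}(\zz_*) - \ee\}$ and invoking the residue theorem converts the $d$-dimensional identity \eqref{eq:exact} into the $(d-1)$-dimensional identity \eqref{eq:int chain} over the core cycle $\gamma(\rhat)$. The ``all but finitely many $\rr \in E$'' clause absorbs the finite set of exponents where poles of $\zz^{-\rr}$ at the coordinate hyperplanes obstruct the retraction, which for polynomial $P$ is a controlled, finite list.

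The principal obstacle is step two: identifying the local relative class at $\zz_*$ with a Petrowski-type kernel and invoking its vanishing uniformly in $\rhat \in K$. Concretely, one must write the cone-point cycle from \cite{BP-cones} in coordinates adapted to $\widetilde{Q}$, compute its pairing with the Leray residue of $\zz^{-\rr}P/Q^k\,d\zz/\zz$, and recognize the resulting oscillatory integral as a Petrowski fundamental solution that vanishes throughout the supporting cone under the parity/degree condition. Once this vanishing is in place, the global flow, the homological bookkeeping, and the residue conversion are all standard pieces of the ACSV and Atiyah--Bott--G{\aa}rding machinery.
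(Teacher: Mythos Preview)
Your proposal follows the Atiyah--Bott--G{\aa}rding/Petrowski route of \cite{BP-cones,ABG}: identify the local obstruction at the cone point with a Fourier--Leray kernel and invoke the classical lacuna criterion to see it vanishes when $d$ is even and $2k<d$. The paper explicitly does \emph{not} do this. Instead it perturbs $Q$ to $Q_c=Q-c$ so that $\sing_c$ is smooth, computes the intersection class of the expanding torus with $\sing_c$ in the relative group $H_{d-1}(\sing_c\cap\slab,\bottom)$, shows via an intersection-pairing calculation (Proposition~\ref{pr:pairing}, Theorem~\ref{th:rel zero}) that for $d$ even this class equals $\pm[S_c]$ where $S_c$ is the vanishing $(d-1)$-sphere, and then lets $c\to 0$: the tube around $S_c$ scales like $c^{d/2-k}$ and hence contributes nothing in the limit when $d>2k$, while the remaining piece $\Gamma$ is a fixed cycle in $\M(-\ee)$ avoiding all $\sing_c$.

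The gap in your argument is the sentence ``Feeding this vanishing into the relative Morse decomposition produces the desired $\Gamma(\rhat)$.'' The Petrowski lacuna criterion is an \emph{analytic} statement: a certain oscillatory integral vanishes. What Theorem~\ref{th:lacuna}(i) asserts is a \emph{topological} statement: there is an actual cycle $\Gamma(\rhat)\subset\M(-\ee)$ homologous to the Cauchy torus (or at least giving the same integral). Vanishing of the local integral does not by itself produce such a cycle; you would still need to exhibit an absolute $d$-cycle in $\M$ carrying the local obstruction class and supported near $\zz_*$, so that subtracting it leaves something representable in $\M(-\ee)$. The paper comments on exactly this point in the outline (Section~\ref{sec:results}): the ABG path ``would not immediately prove that the integration cycle in the presence of lacuna \ldots\ allows one to `slide' the integration cycle below the height of the cone point.'' Their perturbation argument supplies precisely the missing absolute cycle, namely $\leray S_c$, together with the homological identity $[\TT]=[\leray S_c]+[\leray\gamma]+[\TT']$ in $H_d(\M_c)$ (Lemma~\ref{lem:lift}), and only \emph{then} uses an integral estimate on $\leray S_c$.

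A second difficulty you pass over is that $\sing$ is singular at $\zz_*$, so the tube, intersection-class, and residue constructions you invoke from Section~\ref{sec:preliminaries} are not directly available there; this is again what the perturbation to $\sing_c$ is designed to circumvent. For part~(ii), the paper's argument is not a retraction of $\Gamma(\rhat)$ onto a tubular neighborhood, but the specific decomposition $\Gamma(\rhat)=\leray\gamma(\rhat)+\TT(\xx')$ with $\xx'$ in a descending component, combined with Proposition~\ref{pr:intersection} to kill the $\TT(\xx')$ term for all but finitely many $\rr$.
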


The heuristic meaning of this result is that, for purposes of computing the Cauchy integral,
the chain of integration in~\eqref{eq:cauchy} can be slipped below 
the height $h_\rr(\zz_*)$ of the singular point $\zz_*$.

\subsection*{Motivation}

Our motivating example for this theorem comes from the 
Gillis-Reznick-Zeilberger family of generating functions~\cite{laguerre},
which has origins in the work of Szeg{\"o}~\cite{Szego1933} and 
Askey and Gasper~\cite{AskeyGasper1977} and is further
discussed in~\cite[Theorems~9~--~12]{BMPS}.
\begin{example}[GRZ function at criticality] \label{eg:GRZ}
In four variables, let $F_\lambda(\zz) := 1 / (1 - z_1 - z_2 - z_3 - z_4 
+ \lambda z_1 z_2 z_3 z_4)$ and, for convenience, write $F(\zz)=F_{27}(\zz)$.
It is shown in~\cite{BMPS} via ACSV results for smooth functions
that the diagonal exponential growth rate $|a_{n,n,n,n}|^{1/n}$ of the
power series coefficients of $F_\lambda$
is a function of $\lambda$ that approaches~81 as $\lambda \to 27$.
At the critical value~27, however, the denominator $Q$ of $F$ has a 
real hyperbolic quadric singularity at $\zz_* := (1/3, 1/3, 1/3, 1/3)$.  
Theorem~\ref{th:lacuna} has the immediate consequence that
the exponential growth of $a_\rr$ for $\rhat$ in a neighborhood 
of the diagonal is strictly less than that of 
$\zz_*^{-\rr}=81^{|\rr|}$.  Thus there is a drop in the exponential rate
at criticality.
\end{example}

With a little further work, understanding of the drop can be sharpened 
considerably.  In Section~\ref{sec:GRZ} we state a result for
general functions satisfying the conditions of Theorem~\ref{th:lacuna}.
The result, Theorem~\ref{th:exp}, sharpens Theorem~\ref{th:lacuna},  
quantifying the exponential drop by pushing the contour $\Gamma$ 
down all the way to the next critical value.  It is a direct consequence
of Theorem~\ref{th:lacuna} together with a deformation result
of~\cite{BMP-morse}.  In the case of the GKZ function 
at criticality, the following explicit asymptotics result.

\begin{thm} \label{th:zeta}
The diagonal coefficients $a_{n,n,n,n}$ in the power series 
expansion of $F(\zz)$ from Example~\ref{eg:GRZ} 
have an asymptotic expansion in decreasing
powers of $n$, beginning
\begin{eqnarray}
a_{n,n,n,n} & = & 3 \cdot \left ( 
   \frac{\left(4i\sqrt{2}-7 \right)^n}{n^{3/2}} 
   \, \frac{\left(5i-\sqrt{2}\right)\sqrt{-2i\sqrt{2}-8}}{24\pi^{3/2}} 
   + \frac{\left(-4i\sqrt{2}-7 \right)^n}{n^{3/2}} 
   \, \frac{\left(-5i-\sqrt{2}\right)\sqrt{2i\sqrt{2}-8}}{24\pi^{3/2}} 
   \right ) \label{eq:anLambda}\\[2ex]
&& + O\left( 9^n \, n^{-5/2} \right). \nonumber
\end{eqnarray}
More generally, as $\rr \to \infty$ and $\rhat$ varies over 
some compact neighborhood of the diagonal, there is a uniform estimate
$$a_\rr = p_\rhat^n \, n^{-3/2} \cdot C_{\rhat} \cos (n \alpha_{\rhat} 
   + \beta_{\rhat}) + O(p_\rhat^n n^{-5/3}) \,, $$
where $p_\rhat,C_\rhat,\alpha_\rhat,$ and $\beta_\rhat$ vary continuously 
with $\rhat$ and specialize to produce~\eqref{eq:anLambda}
when $\rhat$ is on the diagonal.
\end{thm}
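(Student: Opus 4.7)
The strategy is to use Theorem~\ref{th:lacuna} to deform the Cauchy contour past the cone singularity $\zz_*=(1/3,1/3,1/3,1/3)$, and then to extract asymptotics by locating the next critical points of the height function and applying smooth-point saddle analysis. First I would apply part~(ii) of Theorem~\ref{th:lacuna} to $\rhat=(1,1,1,1)/4$ to rewrite
\[
a_{n,n,n,n} \;=\; \int_{\gamma(\rhat)} \Res_\sing \zz^{-\rr}\, \frac{1}{Q}\, \frac{d\zz}{\zz},
\]
with $\gamma$ a compact $(d{-}1)$-cycle supported in $\sing(-\epsilon)$; this already shows that the integrand has modulus strictly less than $81^n$ on the chain, so the exponential rate drops below the value predicted by the cone.

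Next I would invoke Theorem~\ref{th:exp} to push $\gamma$ further downward along $h_{\rhat}$, collecting Morse-theoretic contributions at each smooth critical point of $h_{\rhat}|_{\sing}$ that is crossed, until the chain lies below the next critical value. For the diagonal direction the Lagrange equations $z_j\,\partial Q/\partial z_j = \text{const}$ force $z_1=\cdots=z_4=:z$ with $1-4z+27z^4=0$; the factorization $(3z-1)^2(3z^2+2z+1)$ picks out the cone point $z=1/3$ (whose double appearance reflects its singular nature) together with a complex conjugate pair $z_\pm = (-1\pm i\sqrt{2})/3$, at which $\zz^{-(n,n,n,n)} = (-7\mp 4i\sqrt{2})^n$ of modulus $9^n$. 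One then verifies that these are the only new critical points encountered down to the height $-\log 9$ and that no other critical points of $h_{\rhat}|_{\sing}$ lurk at intermediate height.

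Each $\zz_\pm$ is a nondegenerate smooth critical point, so the standard saddle-point asymptotic of~\cite{PW1,PW-book} applies and produces a contribution of order $n^{-(d-1)/2}=n^{-3/2}$ times the corresponding exponential, with constant determined by $P(\zz_\pm)=1$ and the logarithmic Hessian of the phase. Summing the two complex conjugate contributions yields the real oscillatory expression~\eqref{eq:anLambda}, and the remainder $O(9^n n^{-5/2})$ is the next order of the saddle expansion. The uniform statement for $\rhat$ in a compact neighborhood of the diagonal follows because the smooth critical points $\zz_\pm(\rhat)$ depend analytically on $\rhat$ (by the implicit function theorem on the smooth part of $\sing$) and saddle-point asymptotics are uniform over compact parameter families of nondegenerate smooth critical points; the resulting two-term sum is naturally repackaged as $3 C_{\rhat} \cos(n\alpha_{\rhat}+\beta_{\rhat})$.

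The main obstacle is pinning down the integer multiplicity~$3$ with which the pair $\zz_\pm$ contributes after $\gamma$ crosses them. This amounts to expressing the homology class of the pushed-down cycle in the basis of Thom classes of $\zz_\pm$ inside $H_{d-1}(\sing, \sing(-\epsilon'))$, which, as the abstract stresses, is difficult to read off by direct geometric means. I would resolve this by combining the topological description provided by Theorem~\ref{th:lacuna} with symbolic algebraic evaluation of the saddle-point constants and a finite numerical match against exactly computed values of $a_{n,n,n,n}$: since the overall prefactor must be an integer, a finite check determines it rigorously and fixes the constant at $3$.
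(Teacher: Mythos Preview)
Your overall strategy matches the paper's: apply Theorem~\ref{th:lacuna} to slip past the cone point, push the cycle down to the next critical height via Theorem~\ref{th:exp}, identify the pair of smooth critical points $\zz_\pm=(\zeta,\zeta,\zeta,\zeta)$ and its conjugate, and then invoke the standard smooth-point saddle formulas to produce~\eqref{eq:anLambda} up to an unknown integer multiplicity~$\nint$, with uniformity in $\rhat$ coming from analytic dependence of the smooth critical data.

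The place where you diverge from the paper, and where there is a genuine gap, is the determination of $\nint=3$. You propose to fix $\nint$ by a ``finite numerical match against exactly computed values of $a_{n,n,n,n}$.'' But this is not rigorous as stated: to conclude $\nint=3$ from finitely many exact coefficients you would need an \emph{effective} bound on the implied constant in the $O(9^n n^{-5/2})$ error, and neither Theorem~\ref{th:lacuna} nor the saddle-point expansion supplies one. The paper circumvents this by a different route: it uses creative telescoping to produce an explicit Fuchsian ODE annihilating the diagonal generating function, then applies rigorous numerical analytic continuation (the Mezzarobba algorithms in the Sage \texttt{ore\_algebra} package) to compute the connection coefficients at the singularities $\zeta^4,\overline{\zeta}^4$ to about $1000$ certified decimal places. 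This yields the leading asymptotic constant of $a_{n,n,n,n}$ directly, with a rigorous error interval, and comparison with the explicit saddle constant times the unknown integer $\nint$ then forces $\nint=3$. The D-finite machinery is doing real work here that a naive coefficient match cannot replace without an independent source of effective error control.
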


\subsection*{Heuristic argument}

Our plan is to expand a torus $\TT$ of integration representing 
series coefficients via the Cauchy integral theorem using
a homotopy $\bH$ that takes 
it through the point $\zz_*$ and beyond.  Let 
$\sing_*=\sing \cap \C_*^d$ denote the points of $\sing$ with no coordinate
vanishing.  A classical construction, due to Leray, Thom and others,
shows that $\TT$ is homologous in $H_d (\M)$ to a cycle $\Gamma'$
which coincides above height $h(\zz_*) - \ee$ with a tube around a
cycle $\sigma$; the height $h_\rr$ is maximized on $\sigma$ at the
point $\zz_*$ and the chain $\sigma$ is the intersection of $\bH$ with
$\sing_*$.  We would like to see that $\sigma$ is homologous to 
a class supported on $\sing(-\ee)$.  

To do this, we compute the intersection $\sigma$ directly in coordinates 
suggested by the hypotheses of the theorem. In particular, we use local 
coordinates where, after taking logarithms, $\sing$ is the cone 
$\{ z_1^2 - \sum_{j=2}^d z_j^2 = 0 \}$, and select a homotopy $\bH$ from 
$\xx + i \R^d$ to $\xx' + i \R^d$ with $\xx \in B$ so that the line 
segment $\overline{\xx \xx'}$ is perpendicularly bisected by the support
hyperplane to $B$ at $\xx_*$.  In these coordinates, the intersection
class $\I$ is the cone $\{ i \yy : \yy \in \R^d \mbox{ and } 
y_1^2 = \sum_{j=2}^d y_j^2 \}$.  The residue is singular at
the origin (in new coordinates) but converges when $d > 2k + 1$.
Inside the variety $\sing$, the cone $\I$ may be folded down so 
as to double cover the cone $\{ x + i \yy : \yy \in \R^d, x > 0, 
y_1 = 0 \mbox{ and } x^2 = |\yy|^2 \}$, as shown in Figure~\ref{fig:fold}.
The two covering maps have opposite orientations when $d$ is even.  
The critical points of $h_{\rr}$ restricted to $\sing$ are obstructions 
for deforming the contour of integration downwards, and in this case 
the residue integral vanishes and the contour may be further deformed 
until it encounters the next highest critical point.

\begin{figure}[!ht]
\centering
\includegraphics[width=3in]{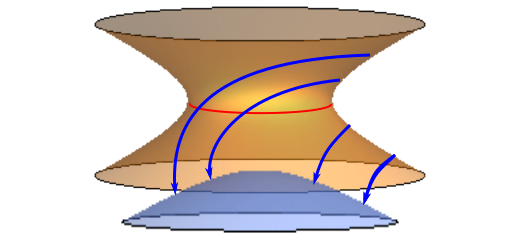}
\caption{folding the cone down in two opposite directions}
\label{fig:fold}
\end{figure}

\subsection*{Outline of actual proof}

The proof cannot precisely follow the heuristic argument because
the intersection cycle construction and the residue integral 
theorem work only when $\sing_*$ is smooth. 
We remark that the same trouble arose in the setting of 
\cite{BP-cones}. There, the authors adopt the method of \cite{ABG} 
to reduce the local integration cycle to its projectivized,
compact counterpart: the so-called Petrowski or Leray cycles. 
That path required significant investment into analytic auxiliary 
results and, more importantly, would not immediately prove that 
the integration cycle in the presence of lacuna (i.e., when $d$ 
is even and the denominator degree not too high) allows one to 
``slide'' the integration cycle below the height of the cone point. 

Thus we use a different strategy, first perturbing the denominator 
so that the perturbed varieties on which $Q(\zz) = c$ for small 
$c$ become smooth. This kind of regularization also has the advantage, 
compared to what was used in \cite{BP-cones}, that we obtain 
information about the behavior of coefficients of the generating 
functions $P/(Q-c)^k$. 
Next, we study the behavior of the coefficients 
of the resulting generating functions as $c\to 0$. We
denote the zero set of $Q(\zz)-c$ by $\sing_c$, write 
$(\sing_c)_*$ for the points of $\sing_c$ with non-zero coordinates, 
and denote the restriction of $\sing_c$ to its points of 
height at most $h(\zz_*) - \ee$ by $\sing_c (\leq -\ee)$.
It is easiest to work in the lower dimensional setting, with 
$\sigma_c$ on $(\sing_c)_*$ rather than $\Gamma_c$ on $\M_c$, 
and to work in relative homology of $(\sing_c)_*$ with respect to 
$\sing_c (\leq - \ee)$.

Section~\ref{sec:quadric} lays the theoretical groundwork 
by computing the explicit intersection cycle in a limiting case of
the perturbed variety as $c \downarrow 0$; this is a rescaled limit, 
and is smooth, in contrast to the variety at $c=0$. 
Although the results of Section~\ref{sec:quadric}
are subsumed by later arguments, its focus on 
explicit computation allows for valuable intuition and visualization.
Properties of our family of perturbations
are given in Section~\ref{sec:perturbation}.  
Section~\ref{sec:relative} uses this approach to complete a relative
homology computation in $\M_c$ for sufficiently small $c$.
It turns out that $\sigma_c$ is not null-homologous in 
$H_{d-1} ((\sing_c)_* , \sing_c (\leq -\ee))$ but is instead 
homologous to an absolute cycle $S_c$ which is homeomorphic 
to a $(d-2)$-sphere and lies in a small neighborhood of $\zz_*$.  
Under the hypotheses of Theorem~\ref{th:lacuna}, the integral 
over $S_c$ is easily seen to converge to zero as $c \downarrow 0$.
The remaining outline of the proof is as follows.

\begin{enumerate}[1.]
\item Show that $\sigma_c \cong S_c$ in $H_{d-1} ((\sing_c)_* , 
\sing_c (\leq - \ee))$.  This is accomplished in Section~\ref{sec:relative}.
\label{step:1}
\item Pass to a tubular neighborhood to see that $T$ in~\eqref{eq:cauchy} 
may be replaced by the sum of tubular neighborhoods of $S_c$ and
a second chain $\gamma$, not depending on $c$, whose maximum height 
is at most $h(\zz_*) - \ee$.  
This is accomplished in Section~\ref{sec:proof}.
\label{step:2}
\item Dimension analysis shows that the integral over the tubular 
neighborhood of $S_c$ goes to zero as $c \downarrow 0$.  This
is accomplished in Section~\ref{sec:proof}, proving 
Theorem~\ref{th:lacuna}.
\label{step:3}
\item Further Morse theoretic analysis shows that the contour $\gamma$
is an integer times the sum of two standard saddle-point contours. 
In Section~\ref{sec:GRZ} we show that for our motivating
example this integer is in fact~3,
proving Theorem~\ref{th:zeta}
\label{step:4}
\end{enumerate}

\section{Preliminaries: tubes, intersection class, residue form}
\label{sec:preliminaries}

We recall some topological facts from various sources, most
of which are summarized for application to ACSV 
in~\cite[pages~334~--~338]{PW-book}.
Let $K$ be any compact subset of $\sing_*$ on which 
the gradient of the square-free part of $Q$ 
(the product of its distinct irreducible factors)
does not vanish.  The well known Tubular Neighborhood Theorem 
(for example,~\cite[Theorem~11.1]{milnor-stasheff}) states the 
following.
\begin{pr}[Tubular Neighborhood Theorem] \label{pr:tube}
The normal bundle over $K$ is trivial and there is a global
product structure of a tubular neighborhood of $\sing_*$ in $\C_*^d$.
$\Cox$
\end{pr}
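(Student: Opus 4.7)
The plan is to deduce both assertions from two short observations: (i) the conormal bundle of $\sing_*$ over $K$ is trivialized by the differential $dQ$, and (ii) the standard tubular neighborhood theorem then upgrades triviality of the normal bundle into a product neighborhood in $\C_*^d$.

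First I would note that on the locus where $\grad Q$ does not vanish, the variety $\sing_*$ is a smooth (real codimension-two) submanifold of $\C_*^d$. Its conormal bundle $N^*\sing_*$ is a complex line bundle, and the $1$-form $dQ$, viewed fiberwise, defines a global holomorphic section that vanishes at a point $\zz \in \sing_*$ if and only if $\grad Q(\zz) = \zero$. By hypothesis this section is nowhere zero on the compact set $K$, so it trivializes $N^*\sing_*|_K$; passing to the dual yields a trivialization $N\sing_*|_K \cong K \times \C$.

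Second, I would invoke the smooth tubular neighborhood theorem (for instance \cite[Theorem~11.1]{milnor-stasheff}) to produce a diffeomorphism from an open neighborhood of the zero section in $N\sing_*|_K$ onto an open neighborhood of $K$ in $\C_*^d$, restricting to the identity along $K$. Composing with the trivialization from the previous paragraph and shrinking to a disk bundle of sufficiently small radius (uniform in $K$ by compactness) gives the desired global product structure.

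There is no real obstacle beyond these two steps; the content of the proposition lies entirely in the triviality statement, which is delivered for free by the nonvanishing of $dQ$. The only minor point to flag is that the resulting tubular neighborhood is only guaranteed to be smooth, not holomorphic — this is harmless for the topological and integration arguments that follow in later sections.
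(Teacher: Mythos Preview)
Your argument is correct and matches the paper's treatment: the paper does not give a proof at all, simply citing \cite[Theorem~11.1]{milnor-stasheff} and placing the $\Cox$ immediately after the statement. Your proposal spells out exactly the two ingredients implicit in that citation --- the trivialization of the (co)normal line bundle by the nowhere-vanishing section $dQ$ on $K$, and the standard smooth tubular neighborhood theorem --- so there is nothing to correct or add.
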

This implies the existence of operators $\disk$ and $\leray$,
respectively the product with a small disk and with its boundary,
mapping $k$-chains in $\sing_*$ respectively to $(k+2)$-chains
in $\M$ and $(k+1)$-chains in $\M$, well defined up
to a natural homeomorphism as long as the radius of the disk
is sufficiently small.  We refer to $\leray \chain$ as the 
{\bf tube around $\gamma$} and $\disk \chain$ as the 
{\bf tubular neighborhood} of $\gamma$.  Elementary rules 
for boundaries of products imply
\begin{equation} \label{eq:bdry disk}
\begin{array}{rcl}
\partial (\leray \gamma) & = & \leray (\partial \gamma) \, ; \\
\partial (\disk \gamma) & = & \leray \gamma \cup \disk (\partial \gamma) \, .
\end{array}
\end{equation}
Because $\leray$ commutes with $\partial$, cycles map to cycles,
boundaries map to boundaries, and the map $\leray$ on
the singular chain complex of $\sing_*$ induces a map on homology 
$\leray : H_* (\sing_*) \to H_* (\C_*^d \setminus \sing)$.  This 
allows one to construct the {\bf intersection class}, as 
in~\cite[Proposition~2.9]{BMP-morse}.

\begin{defn}[intersection class]\label{def:IC}
Suppose $Q$ vanishes on a smooth variety $\sing$ and let $\TT$ and $\TT'$
be two $d$-cycles in $\M$ that are homologous in $\C_*^d$.  Then there exists
a unique class $\I = \I (\TT , \TT') \in H_{d-1} (\sing_*)$ such that
$$[\TT] - [\TT'] = \leray \I \qquad \mbox{\rm in } H_d (\M) \, .$$
The class $\I$ can be represented by the manifold $\bH \cap \sing$
for any manifold $\bH$ with boundary $\TT-\TT'$ in $\C_*^d$ that
intersects $\sing$ transversely, with appropriate orientation (or, 
alternatively, by the image of the fundamental class of 
$\bH\cap\sing$ under the natural embedding).
\end{defn}

We remark that if $\sing$ is not smooth but its singularities 
(where $Q$ and the gradient of its square-free 
part vanish) have real dimension less than 
$d-2$ then $\bH$ generically avoids the singularities of $\sing$, so
$\I(\TT , \TT')$ is well defined. Although the singular set does not 
always satisfy this dimensional condition, it does so in our applications,
where the singular set is zero dimensional.

For our purposes, the natural cycles to consider are the tori $\TT(\xx)$ 
for $\xx$ in the complement of the amoeba of $Q$. In this case, there is an 
especially convenient choice of cobordism between $\TT(\xx)$ and $\TT(\xx')$, 
namely the $\Ll$-preimage of the straight segment connecting $\xx$ and 
$\xx'$ (or its small perturbation). We will be referring to this cobordism 
as the {\em standard} one.

What are the good choices of $\xx'$? We would like to make the integrand 
$F(\zz)\zz^{-\rr}d\zz/\zz$ exponentially small in $|\rr|$ when 
$\Ll(\zz) =\xx'$, which happens if we can take $-\rr\cdot\xx'$ 
to have arbitrarily small
modulus. When $Q$ is a 
Laurent polynomial the feasibility of this follows from known facts
about cones of hyperbolicity, as we now demonstrate.

First, recall that the Newton polytope of $Q$ is the convex hull of the 
exponents $\mm$ of the monomials of $Q$,
\begin{equation}\label{eq:newton}
N(Q)=\mathtt{conv}(\{\mm: q_\mm\neq 0, Q(\zz)=\sum_\mm q_\mm \zz^\mm\})\subset\Real^d.
\end{equation}
The Newton polytope has vertices in the integer lattice, and the 
convex open components of the
amoeba complement $\amoeba(Q)^c$ map injectively into the integer
points in $N(Q)$ (see \cite{FPTs00}). Moreover, any vertex of $N(Q)$
has a preimage under this mapping, which is an unbounded 
component of $\amoeba(Q)^c$. The recession cone of the component 
$B_\mm$ corresponding to a vertex $\mm$ is the interior of the 
normal cone to $N(Q)$ at $\mm$ (i.e., the collection of vectors 
$\dd$ such that $\max_{\rr\in N(Q)} (\dd,\rr)$ is uniquely 
attained at $\mm$). Notice that this normal cone is dual to the 
cone $N(Q)_\mm$ spanned by $N(Q)-\mm$.

Now, let $B$ be the component of $\amoeba(Q)^c$ corresponding
to the Laurent expansion of $F$ under consideration, and let $\mm$ be the corresponding
integer vector in $N(Q)$. The vectors $\vv$ with $\xx_*+\vv \in B$ form an open 
cone contained in $N(Q)_\mm$. Pick a generic $\dd$ in the 
recession cone of $B$; then when $t>0$ is large enough $\xx_*-t\dd$ 
is contained in an unbounded component $B'$ of the complement 
to amoeba (this follows from the fact that the union of the 
recession cones of the unbounded components of $\amoeba(Q)^c$
are the complement to the set of functionals attaining their 
maxima on $N(Q)$ at multiple points, a positive codimension fan in $\Real^d$).
Hence, choosing $\xx'$ in this component $B'$ allows one to 
deform $\TT'=\TT(\xx')$ while avoiding $\sing$ so that $h_\rr$ becomes 
arbitrarily close to $-\infty$.

\begin{defn}\label{def:descending comp}
We call a component $B'$ whose recession 
cone contains a vector $-\dd$ with $\dd$ in the recession 
cone of $B$ {\em descending} with respect to the component 
$B$. Components $B'$ with this property are in general 
not unique, but any choice of $B'$ works for our argument.
\end{defn}

The following result is well known; see, 
e.g.~\cite[Proposition~2.14]{BMP-morse}.

\begin{defn}[residue form] \label{def:residue}
There is a homomorphism $\Res : H^d (\M) \to H^{d-1} (\sing_*)$ 
in deRham cohomologies such
that for any class $\gamma \in H_d (\sing)$,
\begin{equation} \label{eq:residue}
\int_{\leray \gamma} \omega = \int_{\gamma} \Res (\omega) \, .
\end{equation}
In general, $\Res(\omega)$ can be derived 
locally from a form representing $\omega$ 
(we also use the notation $\Res$ for the 
corresponding operator on differential forms). 
When $F = P/Q$ is rational 
with $Q$ squarefree, $\Res$ commutes with multiplication by 
any locally holomorphic function and satisfies
$$Q \wedge \Res (F \, d\zz) = P \, d\zz \, .$$
More generally, if $F = P / Q^k$, then (see, e.g. \cite{Pham})
the residue can be expressed in coordinates as
\begin{equation} \label{eq:residue k}
\Res \left[ \zz^{-\rr} F(\zz) \frac{d\zz}{\zz} \right ]
   := \frac{1}{(k-1)!}\frac{d^{k-1}}{dc^{k-1}} \left [
   \frac{P \zz^{-\rr}}{\zz} \right ] \, d\sigma\, ,
\end{equation}
where $\sigma$ is the natural area form on $\sing$ (characterized by $dQ\wedge \sigma=d\zz$), and the 
partial derivatives with respect to $c$ are taken in the coordinates 
where $c$ is one of the variables.
\end{defn}

Putting this together with the definition and construction of the
intersection class and Cauchy's integral formula yields the following 
representation of the coefficients $a_\rr$.

\begin{pr} \label{pr:intersection}
Suppose $F = G/Q^k = \sum_{\rr \in E} a_\rr \zz^\rr$ with $G$ holomorphic
and $Q$ a polynomial, the series converging when $\log |\zz|$ is 
in the component $B$ of $\amoeba (Q)^c$.  Let 
$\xx \in B$ and $\TT (\xx) := \Ll^{-1} (\xx)$ be the torus with 
log-polyradius $\xx$.  Let $\xx'$ be any other point in $\amoeba(Q)^c$.  
Then 
\begin{equation}\label{eq:two terms}
a_\rr \qquad = \qquad 
\frac{1}{(2 \pi i)^d} \int_{\I (\TT (\xx) , \TT(\xx'))} 
   \Res \left [ \zz^{-\rr} F(\zz) \frac{d\zz}{\zz} \right ]
   \; \; + \;\;  
   \frac{1}{(2 \pi i)^d} \int_{\TT (\xx')} \zz^{-\rr} F(\zz) \frac{d\zz}{\zz} 
   \,.
 \end{equation}
Moreover, if $\xx'$ is a descending component $B'$ with 
respect to $B$, and $G$ is a polynomial, then 
for all but finitely many $\rr \in E$,
$$a_\rr = \frac{1}{(2 \pi i)^d} \int_{\I (\TT(\xx) , \TT(\xx'))} \zz^{-\rr}
   \Res \left ( F(\zz) \, \frac{d\zz}{\zz} \right ) \, .$$
\end{pr}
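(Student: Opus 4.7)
The plan is to first establish \eqref{eq:two terms} by direct assembly of the tools already introduced, then to prove that the torus integral in \eqref{eq:two terms} vanishes for all but finitely many $\rr \in E$ under the descending hypothesis. Write $\omega := \zz^{-\rr} F(\zz)\,d\zz/\zz$, a form holomorphic on $\M$. Both $\TT(\xx)$ and $\TT(\xx')$ are $d$-tori in $\C_*^d$, hence homologous there via the $\Ll$-preimage of the segment joining $\xx$ and $\xx'$, generically perturbed to miss the zero-dimensional singular locus of $\sing$. Definition~\ref{def:IC} then produces the intersection class $\I := \I(\TT(\xx),\TT(\xx')) \in H_{d-1}(\sing_*)$ satisfying $[\TT(\xx)] - [\TT(\xx')] = \leray \I$ in $H_d(\M)$. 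Integrating $\omega$ over both sides, invoking the residue identity \eqref{eq:residue}, and substituting Cauchy's formula \eqref{eq:cauchy} for $\int_{\TT(\xx)}\omega$ yields \eqref{eq:two terms}.

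For the second identity, once the torus integral is shown to vanish for all but finitely many $\rr \in E$, the stated form follows by pulling $\zz^{-\rr}$, holomorphic on a neighborhood of $\sing_*$ in $\C_*^d$, past the residue operator. The torus integral equals $(2\pi i)^d$ times $a^{B'}_\rr$, the coefficient of $\zz^\rr$ in the Laurent expansion of $F$ convergent on $B'$; denoting the support of that expansion by $E^{B'}$, it therefore suffices to show that $E \cap E^{B'}$ is finite.

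The support analysis runs through the Newton polytope. Let $\mm, \mm'$ be the vertices of $N(Q)$ corresponding to $B, B'$, and let $C_\mm$ be the convex cone generated by $\{\alpha - \mm : \alpha \in \mathrm{supp}(Q),\ \alpha \neq \mm\}$. The geometric series expansion of $Q^{-k}$, obtained by factoring out $q_\mm \zz^\mm$, shows that the $B$-supported series of $Q^{-k}$ lies in $-k\mm + C_\mm$, and symmetrically the $B'$-supported series in $-k\mm' + C_{\mm'}$. Multiplication by the polynomial $G$ translates each support by the finite set $S_G := \mathrm{supp}(G)$, giving $E \subset S_G + (-k\mm + C_\mm)$ and $E^{B'} \subset S_G + (-k\mm' + C_{\mm'})$. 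Standard polytope duality identifies $\overline{C_\mm}$ with the dual cone $R_B^*$ of the (open, full-dimensional) recession cone $R_B$---the open normal cone to $N(Q)$ at $\mm$---and similarly $\overline{C_{\mm'}} = R_{B'}^*$. Thus the geometric heart of the argument is to establish $R_B^* \cap R_{B'}^* = \{0\}$.

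By the descending hypothesis there exists $\dd$ in the interior of $R_B$ with $-\dd \in R_{B'}$. Any $\rr \in R_B^* \cap R_{B'}^*$ satisfies both $\rr\cdot\dd \le 0$ and $\rr\cdot(-\dd) \le 0$, hence $\rr\cdot\dd = 0$; since $\dd$ is interior to $R_B$, the inequality $\rr\cdot\dd' \le 0$ must persist for $\dd'$ ranging over a full open neighborhood of $\dd$, which forces $\rr = 0$. Because $R_B^*$ and $R_{B'}^*$ are closed pointed cones with only trivial intersection, any two of their translates intersect in a bounded subset of $\R^d$: an unbounded sequence in such an intersection would, after normalization, yield a nonzero unit vector in $R_B^* \cap R_{B'}^*$, a contradiction. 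Since $S_G$ is finite, $E \cap E^{B'}$ lies in a finite union of these bounded translate-intersections, hence is bounded; as a subset of $\Z^d$ it is finite. I expect this last step---transferring cone-disjointness into lattice-finiteness---to be the main technical point of the argument, the earlier parts amounting to a direct assembly of the tools compiled in Section~\ref{sec:preliminaries}.
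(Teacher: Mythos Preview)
Your assembly of \eqref{eq:two terms} from Cauchy's formula, the intersection class, and the residue identity matches the paper's one-line proof exactly.

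For the second identity, however, you take a genuinely different route. The paper argues analytically: the integral $\int_{\TT(\xx')}\zz^{-\rr}F(\zz)\,d\zz/\zz$ does not depend on the choice of $\xx'$ within the component $B'$, so one may send $\xx'$ to infinity along a recession direction of $B'$; there the modulus $|\zz^{-\rr}|$ decays (for $\rr$ large in $N(Q)_\mm$) faster than $|G/Q^k|$ and the volume element can grow, forcing the integral to zero. Your argument is instead combinatorial: you identify the torus integral with $(2\pi i)^d$ times the coefficient $a^{B'}_\rr$ of the competing Laurent expansion, and then show directly that the two supports $E$ and $E^{B'}$ meet in a finite set by a Newton-polytope cone analysis culminating in $R_B^*\cap R_{B'}^*=\{0\}$. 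This is correct and arguably cleaner, since it makes the ``finitely many $\rr$'' exception completely explicit as a bounded lattice set, whereas the paper's estimate leaves the exceptional set implicit. The analytic argument, on the other hand, avoids committing to the claim that $B$ and $B'$ correspond to \emph{vertices} of $N(Q)$ --- an assumption you use when writing ``let $\mm,\mm'$ be the vertices of $N(Q)$ corresponding to $B,B'$,'' and which is implicit but not stated in the proposition. In the paper's applications (and in the construction of the descending component via a generic $\dd$) this assumption holds, so the gap is cosmetic rather than substantive.
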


\noindent{\sc Proof:} The first identity is Cauchy's integral formula, the
definition of the intersection class, and~\eqref{eq:residue}. 
The second identity follows from the fact 
that $\sup_{\TT(\xx')}|G/Q^k|$ and the volume of $\TT(\xx')$ grow
at most polynomially in $|\xx'|$ on the torus over $\xx'$. 
For $\rr\in N(Q)_\mm$  large enough in size, the degree 
of the decay of $|\zz^{-\rr}|$ overtakes that polynomial growth, 
so that the last term in \eqref{eq:two terms} can be 
made arbitrarily small. As it is independent of $\xx'$ 
as long as $\xx'$ varies in the same component $B'$, it vanishes identically.
$\Cox$

\section{The limiting quadric} \label{sec:quadric}

In this section we focus on the properties of the particular 
smooth quadratic function
\begin{equation} \label{eq:q}
q(\zz) := -1 + z_1^2 - \sum_{j=2}^d z_j^2  \, .
\end{equation}
Note that unlike the local behaviour of a function $Q$ near a
real hyperbolic quadratic singularity,
the quadric $q$ has constant term $-1$.
The zero set $\singq$ of $q$ can be viewed as 
the solution set $\sing_c$ to the equation $Q(\zz) = c$ 
near the quadratic singularity of $Q$,
after the variables are scaled by $c^{1/2}$.
Our first statement deals with the gradient-like flow on 
$\singq$ with respect to the function $h:=x_0$.

\begin{lem}\label{lem:grad_quadric}
The function $h$ has two critical points $\zz_\pm=(\pm1,0,\ldots,0)$ 
on $\singq$, both of index $d-1$. The stable 
manifold for $\zz_+$ is the unit sphere 
$$ \sph:=\left\{\xx+i\yy : x_1^2+\sum_{k=2}^d y_k^2=0, \; y_1=x_2=\cdots=x_d=0\right\} $$
and its unstable manifold is the upper lobe of the 2 sheeted real hyperboloid 
$$ \hyper_+ := \singq \cap \Real^d = 
\left\{\xx+i\yy :  x_1^2 - \sum_{k=2}^d x_k^2 = 0, \; 
  x_1>0,\; \yy=\zero \right\} \, .  $$
The stable manifold for $\zz_-$ is the lower lobe $\hyper_-$ 
of this hyperboloid, while the unstable manifold of $\zz_-$ 
is still the sphere $\sph$.
\end{lem}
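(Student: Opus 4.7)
The plan is to verify the three assertions — location of critical points, their Morse index, and the stable/unstable manifold description — in that order, using the real-structure symmetries of $\singq$ to sidestep any explicit computation of the gradient flow on an implicit submanifold.

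Since $\singq$ is cut out by the holomorphic equation $q=0$, its real tangent space at $\zz$ is $T_\zz \singq = \{v \in \Comp^d : z_1 v_1 = \sum_{j\geq 2} z_j v_j\}$, and $\zz$ is critical for $h = \operatorname{Re}(z_1)$ precisely when $\operatorname{Re}(v_1) = 0$ on this subspace. A short linear-algebra argument (equivalently, Lagrange multipliers applied to the real and imaginary parts of $q$) forces $z_j = 0$ for $j \geq 2$, whence $q(\zz) = 0$ gives $z_1 = \pm 1$, so the critical set is $\{\zz_+, \zz_-\}$. For the index, I would parametrize $\singq$ locally near $\zz_+$ by the holomorphic coordinates $(z_2, \ldots, z_d)$, writing $z_1 = \sqrt{1 + \sum_{j\geq 2} z_j^2}$, which yields
\begin{equation*}
h - 1 \;=\; \tfrac{1}{2}\sum_{j=2}^{d}(x_j^2 - y_j^2) + O(|\zz|^4);
\end{equation*}
the Hessian has signature $(d-1, d-1)$, so the Morse index is $d-1$. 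The symmetry $z_1 \mapsto -z_1$ transfers the conclusion to $\zz_-$.

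The delicate part is the stable/unstable identification. My strategy is to exhibit two anti-holomorphic involutions of $\singq$ that preserve $h$ and the ambient Euclidean metric — and hence the induced gradient flow on $\singq$. Complex conjugation $\zz \mapsto \bar\zz$ has fixed locus $\hyper_+ \cup \hyper_-$, while the twisted conjugation $\tau:(z_1,z_2,\ldots,z_d) \mapsto (\bar z_1, -\bar z_2, \ldots, -\bar z_d)$ preserves $q$ and $h$ and has fixed locus exactly $\sph$ (the fixed-point equations give $z_1$ real and $z_j$ purely imaginary for $j \geq 2$, subject to $x_1^2 + \sum_{j\geq 2} y_j^2 = 1$). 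Invariance of these fixed loci under the gradient flow is then automatic. Restricting $h$ to $\sph$ yields the coordinate function $x_1$ on the unit sphere in $\R^d$, whose unique maximum and minimum are $\zz_+$ and $\zz_-$; restricting $h$ to $\hyper_\pm$ gives $\pm\sqrt{1+\sum_{j\geq 2} x_j^2}$, with unique critical point $\zz_\pm$. Matching tangent spaces at $\zz_+$ against the quadratic expansion above, $\sph$ is tangent to the negative-eigenvalue directions (the $y_j$-axes) and $\hyper_+$ to the positive-eigenvalue directions (the $x_j$-axes); since each of $\sph$ and $\hyper_+$ has real dimension $d-1$ equal to the index and coindex, the inclusions into the true stable and unstable manifolds are equalities. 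The analogous identification at $\zz_-$ follows by the $z_1 \mapsto -z_1$ symmetry.

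The main obstacle is avoiding a direct and unpleasant calculation of the gradient of $h$ on $\singq$ with respect to the induced metric — a calculation that would otherwise require inverting the defining constraint. The involution trick produces two invariant submanifolds for free, and the dimension count then upgrades them from mere pieces of the (un)stable manifolds to the whole.
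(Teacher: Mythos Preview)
Your argument is correct and follows the same strategy as the paper: identify $\sph$ and $\hyper_\pm$ as invariant under the gradient flow via metric- and height-preserving involutions of $\singq$, check that their tangent spaces at $\zz_\pm$ coincide with the stable/unstable eigenspaces of the Hessian, and conclude by dimension. The only cosmetic differences are that you obtain the index from an explicit local chart (the paper instead invokes the general fact that the real part of a holomorphic Morse function on a complex $(d-1)$-fold has index $d-1$), and that your single anti-holomorphic involution $\tau$ is the composite of the two reflections the paper writes down---with the pleasant side effect that its fixed locus on $\singq$ is exactly $\sph$.
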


\begin{proof}
The critical points can be found by a direct computation.  
Their indices are necessarily $d-1$, as $h$ is the real part 
of a holomorphic function on a complex manifold~\cite{GH}. 
Similarly, direct computation shows that the tangent spaces to 
$\sph, \hyper_\pm$ are the stable/unstable eigenspaces 
for the Hessian matrices of $h$ restricted to $\singq$
at the critical points. 
Lastly, as the gradient vector field is invariant 
with respect to symmetries $\yy\mapsto -\yy$ and 
$(x_1,x_2,\ldots,x_d)\mapsto(x_1,-x_2,\ldots,-x_d)$, 
leaving $\hyper_\pm$ and $\sph$ invariant, they are 
the invariant manifolds for the gradient flow.
\qed
\end{proof} 

Let $\homot : \R^d \times [-1,1] \to \C^d$ be the homotopy defined by
$\homot (\yy , t) := t \e_1 + i \yy$ (we use a new symbol because
$\bH$ is in principle only a cobordism).  Let $\hq$ denote the height
function $\hq(\zz) = - \Re \{ z_1 \}$.  

\begin{thm} \label{th:hyp-sphere}
The intersection cycle of the homotopy $\homot$ with the variety 
$\singq$ is the union of a hyperboloid $\hyper$ and a $(d-1)$-sphere
$\sph$ that intersect in a $(d-2)$-sphere $\sphh$.  These are given
by equations~\eqref{eq:hyper}~--~\eqref{eq:sphh}.  The orientation
of the intersection cycle is continuous on each of the four smooth 
pieces, namely the upper and lower half of $\hyper \setminus \sphh$
and the northern and southern hemispheres of $\sph \setminus \sphh$,
but change signs when crossing $\sphh$.
\end{thm}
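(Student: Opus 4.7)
The plan is to substitute the parametrization of $\homot$ directly into $q$ and read off the intersection from the resulting equations. Writing $\zz = \homot(\yy,t) = t\e_1 + i\yy$, so that $z_1 = t + iy_1$ and $z_j = iy_j$ for $j \geq 2$, one computes
\begin{equation}
q(\homot(\yy,t)) = \left(-1 + t^2 - y_1^2 + \sum_{j=2}^d y_j^2\right) + 2ity_1.
\end{equation}
The key observation is that the imaginary part factors as $2ty_1$, which forces the intersection $\homot \cap \singq$ to split into two sheets according to whether $t = 0$ or $y_1 = 0$. Substituting $t = 0$ reduces the real equation to $\sum_{j \geq 2} y_j^2 = 1 + y_1^2$, the hyperboloid $\hyper$ sitting in the pure-imaginary slice $\{\Re \zz = 0\}$; substituting $y_1 = 0$ reduces it to $t^2 + \sum_{j \geq 2} y_j^2 = 1$, the unit $(d-1)$-sphere $\sph$. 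The two sheets meet exactly where $t = y_1 = 0$ and $\sum_{j \geq 2} y_j^2 = 1$, producing the $(d-2)$-sphere $\sphh$. This settles the set-theoretic content of the theorem and pins down the three labelled equations.

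For the orientation claim I would next verify transversality of $\homot$ and $\singq$ away from $\sphh$. At a point of $\hyper$ with $y_1 \neq 0$, the value $dq(\partial_t) = 2iy_1$ supplies the imaginary direction of $\C$ while $dq(i\partial_{y_j}) = 2y_j$, for any $j \geq 2$ with $y_j \neq 0$, supplies the real direction, so $dq|_{T\homot}$ is surjective; the symmetric calculation handles $\sph$ with $t \neq 0$. Transversality together with the canonical complex orientation of $\singq$ then induces, via the short exact sequence $0 \to T_p \I \to T_p \homot \to N_p \singq \to 0$, a continuous orientation on each of the four connected components of $(\hyper \cup \sph) \setminus \sphh$, namely the upper and lower lobes of $\hyper$ and the northern and southern hemispheres of $\sph$.

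The main obstacle, and the only non-mechanical step, is the sign flip at $\sphh$. There the intersection degenerates: both $dq(\partial_t)$ and $dq(i\partial_{y_1})$ vanish, and in the local coordinates $(t, y_1)$ transverse to $\sphh$ the intersection equation reduces to the nodal equation $ty_1 = 0$. My plan is to resolve the node by a small real perturbation of the homotopy, for example $\homot_\epsilon(\yy,t) := t\e_1 + i\yy + \epsilon \e_2$, which alters the imaginary equation to $ty_1 = \epsilon y_2$; near a point of $\sphh$ where $y_2 \approx 1$ this becomes the standard smoothing $ty_1 \approx \epsilon$, and the perturbed intersection is locally a smooth hyperbola carrying a single coherent orientation. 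Letting $\epsilon \to 0^+$, the two branches of the hyperbola limit onto pairs of the four original pieces (connecting upper-$\hyper$ with one hemisphere of $\sph$, and lower-$\hyper$ with the other), and pulling the coherent orientation back along this limit — then comparing with the perturbation by $-\epsilon$ to cover the other two pairs — establishes that the induced orientations on the four smooth pieces alternate in sign across $\sphh$, as claimed.
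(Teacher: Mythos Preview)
Your proposal is correct and follows essentially the same route as the paper. The set-theoretic computation is identical, and your orientation argument---perturbing $\homot$ by a small real shift in a transverse coordinate (you use $\e_2$, the paper uses $\e_d$ with a cutoff function), reducing the crossing at $\sphh$ to the nodal smoothing $ty_1 \approx \epsilon y_j$, and reading off the sign alternation from the two limits $\epsilon \to 0^\pm$ of the resulting hyperbola---is exactly the paper's perturbation analysis and accompanying figure.
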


\noindent{\sc Proof:}
Writing $\zz_j := x_j + i y_j$, the equations for $\zz$ such that
$\zz$ is in the range of $\homot$ and $q (\zz)= 0$ become
\begin{eqnarray} 
|x_1| & \leq & 1 \label{eq:x1} \\
x_j & = & 0 \qquad (2 \leq j \leq d) \label{eq:xj} \\
x_1^2 - y_1^2 & = & 1 - \sum_{j=2}^d y_j^2 \label{eq:Re} \\
x_1 y_1 & = & 0 \label{eq:Im} \, .
\end{eqnarray}
The solutions to~\eqref{eq:x1}~--~\eqref{eq:Im} form
the union of two sets, one obtained by 
solving~\eqref{eq:x1}~--~\eqref{eq:Re} when $x_1 = 0$ and the 
other by solving~\eqref{eq:x1}~--~\eqref{eq:Re} when $y_1 = 0$;
these intersect along the solutions to~\eqref{eq:x1}~--~\eqref{eq:Re} 
when $x_1 = y_1 = 0$.  The first of these is the one-sheeted hyperboloid
$\hyper \subseteq i \R^d$ given by
\begin{equation} \label{eq:hyper}
- y_1^2 = 1 - \sum_{j=2}^d y_j^2 \, .
\end{equation}
The second is the sphere $\sph \subseteq \R \times i (\R^{d-1})$ given by 
\begin{equation} \label{eq:sph}
x_1^2 + \sum_{j=2}^d y_j^2  = 1 \, .
\end{equation}
These intersect at the equator of the sphere $\sph$, which is the neck
of the hyperboloid $\hyper$.  The intersection set is the sphere $\sphh$
in $\{ 0 \} \times i \R^{d-1}$ given by
\begin{equation} \label{eq:sphh}
\sum_{j=2}^d y_j^2  = 1 \, .
\end{equation}

The intersection class is given by the intersection of $\singq$ with any
homotopy intersecting it transversely.  While $\homot$ does not intersect
$\singq$ transversely, it is the limit of the intersections of $\singq$
with arbitrarily small perturbations of $\homot$ that do intersect
$\singq$ transversely.  Let $\gamma_n$ be a sequence of such transverse
intersection cycles converging to $\gamma := \hyper \cup \sph$.  
Because $\singq$ is smooth, the global product structure on a 
neighborhood of $\singq$ from the Thom lemma implies that as $d$-chains,
\begin{eqnarray*}
\homot (\cdot , -1) - \homot (\cdot , 1)  = 
   \gamma_n \times S_1 \to  \gamma \times S_1,
\end{eqnarray*}
and hence that $\gamma$ represents the intersection class.

Finally, we determine the orientation via a different perturbation
argument.  Choose a point $p \in \sphh$, say for specificity
$p = (0 , \ldots , 0 , i)$.  The tangent space $T_p (\sphh)$
is the span of the vectors $i \e_k$ for $2 \leq k \leq d-1$.
The tangent space $T_p (\sph)$ is obtained by adding the
basis vector $\e_1$, while the span of the tangent space 
$T_p (\hyper)$ is obtained by adding instead the basis vector $i \e_1$.
We see that near $\sphh$, $\gamma$ has a product structure
$\sphh \times \WW$, where $\WW$ is diffeomorphic to two crossing lines, 
with tangent cone $xy = 0$ in the plane $\langle \e_1 , i \e_1 \rangle$,
as in the black lines in Figure~\ref{fig:W}.
\begin{figure}
\centering
\raisebox{-0.6in}{\parbox{2.8in}{\includegraphics[width=2.8in]{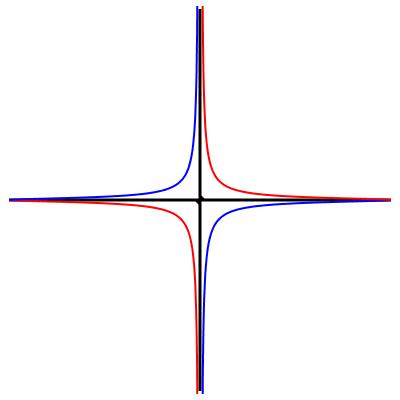}}}
\hspace{0.3in}
\parbox{2.8in}{\includegraphics[width=2.4in]{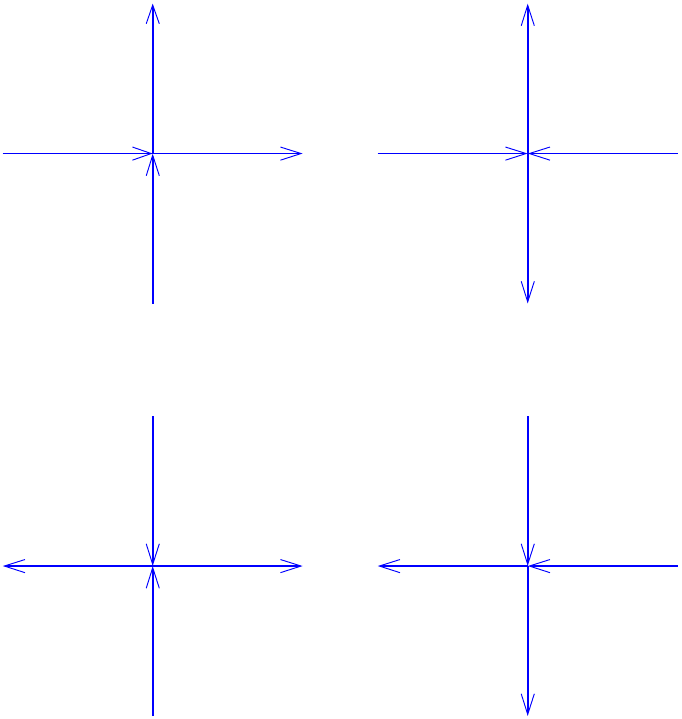}}
\caption{(left) The black line shows $\WW$; the blue line shows the projections 
to the $x_1$-$y_1$ plane of $\WW_\ee$ when $y_d > 0$ and $\ee$ 
is small and positive; the red line shows the projections when
$\ee$ is small and negative; (right) orientations of $\WW$ consistent 
with the blue hyperbola.}
\label{fig:W}
\end{figure}

Now perturb the homotopy as follows.  Let $u: [-1,1] \to \R$ be a 
smooth function that is equal to~1 on $[-1/4 , 1/4]$ and vanishes
outside of $(-1/2 , 1/2)$. 
Define $\homot_\ee (\yy , t) := t \e_1 + \ee u(t) \e_d + i \yy$ where 
$\ee$ is a real number whose magnitude will be chosen sufficiently small 
and whose sign could be either positive or negative.  Because $\sph$
and $\hyper$ intersect only on the subset of $\homot$ where $t = 0$,
their Hausdorff distance on the set $t \notin (-1/4,1/4)$ is positive;
it follows that for sufficiently small $|\ee|$, the intersection of
$\homot_\ee$ with $\singq$ is in the subset of $\homot_\ee$ where
$-1/4 \leq t \leq 1/4$.  There $u = 1$ and the equations for the
intersection $\gamma_\ee$ are modified from~\eqref{eq:x1}~--~\eqref{eq:Im}
as follows:
\begin{eqnarray*}
|x_1| \leq 1 && \eqref{eq:x1} \\
x_j = 0 \;\; (2 \leq j \leq d-1) , \;\; \; x_d = \ee && \eqref{eq:xj}' \\
x_1^2 - y_1^2 = 1 - \ee^2 - \sum_{j=2}^d y_j^2 && \eqref{eq:Re}' \\
x_1 y_1 = - \ee y_d \, . && \eqref{eq:Im}'
\end{eqnarray*}

Although $\homot_\ee$ and $\singq$ still do not intersect transversely,
the intersection set $\gamma_\ee := \homot_\ee \cap \singq$ is now a 
manifold.  We now fix $y_2, \ldots , y_{d-1}$ at a value $\yy$ inside the
unit ball, setting $x_2 = \ee$ and solving~\eqref{eq:Re}$'$ 
and~\eqref{eq:Im}$'$ for $y_1$ and $y_d$ as a function of $x_1$. 
For $x_1^2 < 1 - |\yy|^2$, as $\ee \downarrow 0$, there are two
components of the solution, with $y_d \to \pm \sqrt{1 - x_1^2 - |\yy|^2}$
respectively.  These correspond to different points on the sphere.
Fixing one, say with $y_d > 0$, locally $\gamma_\ee$ has a product
structure $\sphh \times \WW_\ee$, where $\WW_\ee$ is a hyperbola
in quadrants~II and~IV; see the blue curve in Figure~\ref{fig:W}.  
The (oriented) chains $\WW_\ee$ converge to $\WW$ as $\ee \downarrow 0$, 
therefore the possible orientations for $\WW$ are one of the four
shown on the right of Figure~\ref{fig:W}.
The (oriented) chains $\WW_\ee$ also converge to $\WW$ as $\ee \uparrow 0$, 
narrowing the choices to the second and third choices in Figure~\ref{fig:W},
and proving the desired result.
$\Cox$

\begin{thm} \label{th:null}
Let $\north$ be the chain given by $\sph$ with orientation reversed
in the southern hemisphere; in other words, $\north$ is a sphere,
oriented the same as the northern hemisphere of $\sph$.  When $d$
is even, the chain $\gamma$ is homotopic to $\north$ in $H_{d-1} (\singq)$.
\end{thm}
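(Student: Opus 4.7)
The plan is to exhibit an explicit $d$-chain $C \subset \singq$ with $\partial C = \gamma - \north$, establishing $[\gamma] = [\north]$ in $H_{d-1}(\singq)$. First I would pass to new coordinates $w_1 = z_1$, $w_j = iz_j$ for $j \geq 2$, under which $\singq$ becomes the standard smooth affine quadric $\{w_1^2 + \cdots + w_d^2 = 1\} \cong TS^{d-1}$, with $H_{d-1}(\singq) \cong \Z$ generated by the real sphere $S^{d-1}$. In these coordinates $\sph$ is this real sphere, i.e., the zero section of $TS^{d-1}$, so $[\north]$ is a generator; meanwhile $\hyper$ is the ``normal-to-equator'' line sub-bundle of $TS^{d-1}$, whose fiber over the equatorial base point $(0, u_2, \ldots, u_d) \in S^{d-2}$ is the tangent line $\R \cdot \partial_{x_1}$ to $S^{d-1}$ perpendicular to the equator. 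Thus $\hyper \cong S^{d-2} \times \R$ and meets $\sph$ along the equator $\sphh \cong S^{d-2}$.

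The core construction is a folding homotopy $\Phi \colon [0, 1] \times S^{d-2} \times \R \to \singq$, built using the $SO(d-1)$ symmetry acting on the last $d-1$ coordinates (which preserves both $\sph$ and $\hyper$). Equivariance reduces the construction to a $2$-dimensional meridian slice of $\singq$, itself a smooth complex affine conic in $\C^2$; in that slice the traces of $\sph$ and $\hyper$ are an arc of a circle and two branches of a hyperbola meeting at two antipodal points, and one writes an explicit homotopy through the conic dragging each branch of the hyperbola onto the same arc of the ``southern'' circle, in the spirit of Figure~\ref{fig:fold}. Spreading this slice homotopy via the $SO(d-1)$ action produces the chain $C$: its image at $s = 0$ is $\hyper$, while at $s = 1$ it covers the closed southern hemisphere of $\sph$ twice, once via each of the two lobes $\hyper^{\pm}$ of $\hyper \setminus \sphh$.

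The parity of $d$ enters through orientation bookkeeping at $s = 1$. The involution exchanging the two lobes of $\hyper$, restricted to the image at $s = 1$, acts on the $S^{d-2}$ equator factor as the antipodal map, whose Jacobian sign is $(-1)^{d-1}$. Combined with the orientation flip across $\sphh$ that $\gamma$ inherits from Theorem~\ref{th:hyp-sphere}, this produces two coherent contributions of the southern hemisphere at $s = 1$ (with its consistent orientation) precisely when $d$ is even, giving the $s = 1$ boundary of $C$ total coefficient $2[\sph^S]$. A chain-level accounting writes $\gamma - \north = \hyper_{\gamma} - 2[\sph^S]$, matching $\partial C$ and yielding $[\gamma] = [\north]$. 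I expect the main obstacle to be the precise matching of orientations of the four smooth pieces at $\sphh$ (dictated by Theorem~\ref{th:hyp-sphere}) against the Jacobian sign of the fold map at $s = 1$ — a parity-sensitive computation that cancels (rather than adds) for odd $d$, consistent with the parity hypothesis of the theorem.
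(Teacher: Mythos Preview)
Your strategy mirrors the paper's: exploit the $SO(d-1)$ symmetry in the last $d-1$ coordinates to reduce to a two-dimensional meridian slice, fold the hyperboloid $\hyper$ via an explicit homotopy, and read off the parity condition from the degree $(-1)^{d-1}$ of the antipodal map on the $S^{d-2}$ factor. The paper differs only in the \emph{target} of the fold: it sends $\hyper_\gamma$ onto the \emph{non-compact} upper sheet $\hyperr$ of the real two-sheeted hyperboloid $\{x_1^2 = 1 + \sum_{j\ge 2} x_j^2,\ x_1>0\}$ via the explicit formulas~\eqref{eq:deform}, where the two sheets of the double cover $(\rho,\sigma)\mapsto(|\rho|,\operatorname{sgn}(\rho)\sigma)$ cancel exactly when $d$ is even; a separate latitude-pushing homotopy $T_2$ then takes $\sph_\gamma$ to $\north$, and the two homotopies are glued along $\sphh$ via a commuting diagram.

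Your variant folds $\hyper$ onto the compact southern hemisphere of $\sph$, and this is where a genuine gap appears. The chain $\hyper \cong S^{d-2}\times\R$ is non-compact, so $\gamma$ and the homology statement implicitly live in locally-finite (Borel--Moore) chains on $\singq$. A homotopy that collapses the non-compact $\hyper$ onto a compact set is not proper: the image $d$-chain $C=\Phi_*\big([0,1]\times\hyper\big)$ fails to be locally finite, because for $s$ close to $1$ arbitrarily far-out pieces of $\hyper$ land in any fixed neighborhood of the southern hemisphere, and $\partial C$ is then undefined in the relevant homology. The paper's fold onto the non-compact $\hyperr$ is proper for every $t\in[0,1]$ (each time-slice still escapes to infinity as $|\rho|\to\infty$), so this issue does not arise there. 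Your chain-level identity $\gamma-\north=\hyper_\gamma-2[\sph^S]$ and the orientation bookkeeping are otherwise on the right track; the fix is simply to redirect the fold to a non-compact target inside $\singq$---for instance $\hyperr$ as in the paper---and argue cancellation there.
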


\noindent{\sc Proof:}  Let $X_1 := \R \times \sphh$ and $\iota_1 :
\sphh \to X_1$ be the embedding $\yy \mapsto (0,\yy)$.  Let
$X_2 = [-\pi/2 , \pi/2] \times \sphh$ and $\iota_2 : \sphh \to X_2$ 
be the embedding $\yy \mapsto (0,\yy)$.  Let $X$ denote the space 
obtained by gluing $X_1$ to $X_2$ modulo the identification of $\iota_1$
and $\iota_2$ (which conveniently identifies identically named points
$(0,\yy)$ in $X_1$ and $X_2$).  If for $j \in\{1, 2\}$ there are homotopies
$T_j : X_j \times [0,1] \to \singq$ making the maps in 
Figure~\ref{fig:ident} commute, then their union modulo the
identification is a homotopy $T : X \times [0,1] \to \singq$.

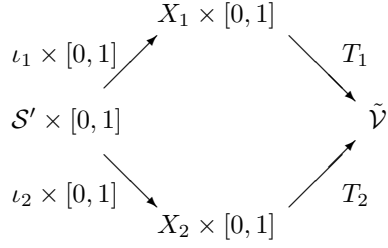
\begin{figure}[!ht] \label{fig:ident}
\hspace{1.5in}
\begin{picture}(100,100)
\put(40,40){$\sphh \times [0,1]$}
\put(75,55){\vector(1,1){20}}
\put(95,80){$X_1 \times [0,1]$}
\put(145,75){\vector(1,-1){25}}
\put(40,65){$\iota_1 \times [0,1]$}
\put(75,30){\vector(1,-1){20}}
\put(95,0){$X_2 \times [0,1]$}
\put(165,65){$T_1$}
\put(40,12){$\iota_2 \times [0,1]$}
\put(145,10){\vector(1,1){25}}
\put(165,12){$T_2$}
\put(175,40){$\singq$}
\end{picture}
\caption{commuting homotopies define a homotopy on the 
identification space $X$}
\end{figure} 

To prove the lemma, it suffices to construct these in such a way
that $T_2$ is a homotopy from $\sph$ to $\north$ and $T_1$ is
a homotopy from $\hyper$ to a null homologous chain.  On $X_1$,
let $\rho$ denote the $\R$ coordinate on $X_1$ and $\sigma$ denote 
the $\sphh$ coordinate.  Let $\zt=(z_2,\dots,z_d)$
and let $\xt$ and $\yt$ denote respectively the real and imaginary 
parts of $\zt$.  Let $t$ denote the $[0,1]$-coordinate of $X_1 \times
[0,1]$.  We may then define the homotopy $T_1$ via the equations
\begin{align*}
\begin{split}
x_0&=\sin(\frac{\pi}2 t)\cosh(\rho), \quad
y_0=\cos(\frac{\pi}2 t)\sinh(\rho), \\[+2mm]
x&=\sin(\frac{\pi}2 t)\sigma\sinh{\rho}, \quad\;\;
y=\cos(\frac{\pi}2 t)\sigma\cosh{\rho},
\end{split}\label{eq:deform}
\end{align*}
and check that $T_1 ((\rho , \sigma) , 0)$ parametrizes $\hyper$ via 
$$y_1 = \sinh (\rho), \qquad \yt = \cosh (\rho) \sigma \, .$$

Next we define the map $\tau : [-\pi/2,\pi/2] \times [0,1] \to 
[-\pi/2 , \pi/2]$ by $\tau (\rho,t) = (1-t) \rho + t (\min(2 \rho, 0) 
- \pi/2)$.  This is a linear homotopy from the identity to the map
$\rho \mapsto \min(2 \rho, 0) - \pi/2$, pictured in Figure~\ref{fig:tau}.
Define $T_2$ by the equations
\begin{equation} \label{eq:deform2}
x_0 = \sin(\tau (\rho,t)), \qquad
y = \cos(\tau (\rho,t)) \sigma \; .
\end{equation}
Again, we verify that $T_2 ((\rho , \sigma) , 0)$ parametrizes the
chain $\sph$ via the parametrization $x_0 = \sin (\rho)$ and $\yt = 
\cos (\rho) \sigma$.  The parametrization is not one to one, mapping
the set $\{ -\pi/2 \} \times \sphh$ to the south pole and $\{ \pi/2 \}
\times \sphh$ to the north pole, however it defines a singular chain
homotopy equivalent to a standard parametrization of~$\sphh$.
\begin{figure}[!ht]
\centering
\includegraphics[width=2in]{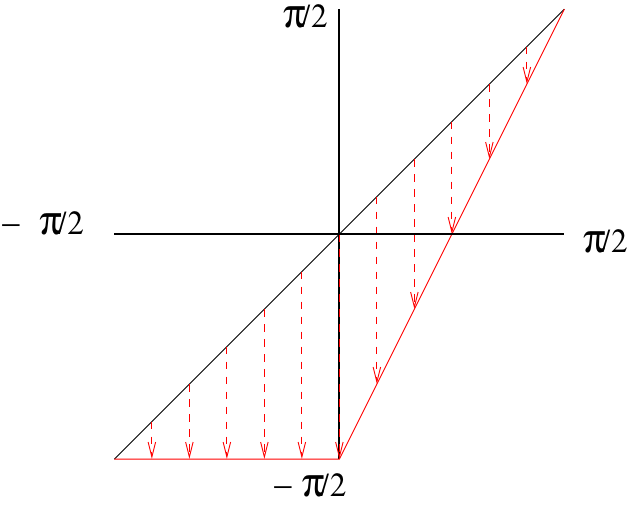}
\caption{the linear homotopy $\tau$}
\label{fig:tau}
\end{figure}

Thirdly, we check that the diagram in Figure~\ref{fig:ident} commutes,
mapping $(\yt , t)$ in both cases to the point $(\sin (t \pi/2) , 
i \cos (t \pi/2) y_2, \ldots , i \cos (t \pi / 2) y_d)$.  Fourthly, we
check that $T_2$ is a homotopy from $\sph$ to $\north$.  This
is clear because the homotopy $T_2$ leaves the (generalized) longitude 
component alone while pushing all the southern latitudes to the south pole
and stretching the northern latitudes to cover all the latitudes.  

Finally, we check that $T_1$ is a homotopy from $\hyper$ to a 
null-homologous chain.  The map $T_1 (\cdot , 1)$ maps the
imaginary hyperboloid $\hyper$ parametrized by 
$(\rho , \sigma)$ into the $\{ x_1 > 0 \}$ branch 
of the real two sheeted hyperboloid $\hyperr$ defined by $x_1^2 = 
1 + \sum_{j=2}^d x_j^2$ and parametrized by cylindrical
coordinates $(r , \sigma')$.  The parametrization is a double
covering, with $(\rho , \sigma)$ and $(-\rho , \sigma)$ 
getting mapped to the same point.  We need to check that
the orientations at $(\rho , \sigma)$ and $(-\rho , -\sigma)$
are opposite.  We may parametrize $\hyper$ by its projection
$\xt$ onto the last $d-1$ coordinates, then, still preserving 
orientation, by polar coordinates $(r , \sigma')$ where $r > 0$
is the magnitude and $\sigma' (\xt) = \xt/r$ when $r > 0$ ($\sigma'$
can be anything when $r=0$).  In these coordinates, the point
$(\rho , \sigma) \in \hyper$ gets mapped to the point
$$ (r, \sigma') = \begin{cases} (\sinh (\rho) , \sigma) & \rho > 0 \\
   (- \sinh (\rho) , -\sigma) & \rho < 0 \end{cases} \, .$$
Recalling that the orientation form on $\hyper$ is given by 
$\sign (\rho) d\rho \wedge d\sigma$, the Jacobian is therefore
given by
\begin{equation}\label{eq:orient}
\frac{D(\sigma',r)}{D(\sigma,\rho)}=
\left\{
\begin{array}{ll}
\frac{d\sigma\wedge\cosh(\rho)d\rho}{d\sigma\wedge d\rho}&:\rho>0\\[+2mm]
\frac{d(-\sigma)\wedge(-\cosh(\rho))d\rho}{-d\sigma\wedge d\rho}&:\rho<0.
\end{array} \right.
\end{equation}

The central symmetry flips the orientation exactly on even-dimensional 
spheres, so that~\eqref{eq:orient} changes signs with the sign of $\rho$ 
exactly when $d-2$ is even.  This implies that for $d$ even, the 
two branches locally covering the sheet $\{x_0^2=|x|^2+1, x_0>0\}$ 
receive opposite signs and the chain $T_1 (\cdot , 1)$ is homologous
to zero.
$\Cox$

In the next section we prove perturbed versions of these results
leading to identification of certain homology and cohomology classes.
To pave the way, we record some further facts about the intersection
of the explicit homotopy $\homot$ with the quadric.

\begin{pr} \label{pr:up down}
There are precisely two critical points for $\hq(\zz) := - \Re \{ z_1 \}$
on $\singq$, namely $\pm e_1$.  At the higher critical point $-\e_1$,
the unstable manifold for the downward gradient flow on $\singq$ is 
the sphere $\sph$, which happens to be a subset of $\homot$, with 
flow lines going longitudinally from the ``north pole'' $-\e_1$ 
to the ``south pole'' $\e_1$.  The stable manifold for the downward 
gradient flow at the north pole is not a subset of $\homot$; it is
the upper sheet $\hyper^+$ of the two-sheeted hyperboloid forming 
the real part of $\sing$, namely the set $\{ \zz \in \R^d : z_1 > 0 
\mbox{ and } z_1^2 = 1 + \sum_{j=2}^d z_j^2 \}$.  
At the south pole $-\e_1$ these are reversed, with the stable manifold
for downward gradient flow equal to $\sph$ and the unstable manifold
being the real surface $\hyper^- := \singq \cap \R^d$;
see Figure~\ref{fig:generators}.
\end{pr}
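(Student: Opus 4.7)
The plan is to deduce Proposition \ref{pr:up down} from Lemma \ref{lem:grad_quadric}. Since $\hq = -h$, the downward gradient flow of $\hq$ on $\singq$ is the same as the upward gradient flow of $h$. Thus the critical points, Morse indices, and all four invariant submanifolds are completely determined by Lemma \ref{lem:grad_quadric}, up to a bookkeeping step to match the sign conventions and to identify which submanifold plays the role of $W^s$ versus $W^u$.

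First, since the critical points of $\hq|_{\singq}$ coincide with those of $h|_{\singq}$, Lemma \ref{lem:grad_quadric} gives exactly the two critical points $\pm\e_1$, each of Morse index $d-1$. The values $\hq(-\e_1) = 1 > -1 = \hq(\e_1)$ identify $-\e_1$ as the higher critical point of $\hq$. Next, I would transfer the identification of the invariant submanifolds: the stable manifold for the downward $\hq$-flow at a critical point $p$ equals the unstable manifold for the downward $h$-flow at $p$, and vice versa. Consequently, at the higher critical point $-\e_1$ one obtains $W^u = \sph$ and $W^s$ equal to the real sheet of $\singq \cap \R^d$ through $-\e_1$; at the lower critical point $\e_1$ one obtains $W^s = \sph$ and $W^u$ equal to the real sheet through $\e_1$.

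Finally, I would verify the inclusions into $\homot(\yy, t) = t\e_1 + i\yy$. By equation~\eqref{eq:sph}, the sphere $\sph$ consists of points $(x_1, iy_2, \ldots, iy_d)$ with $x_1^2 + \sum_{j \geq 2} y_j^2 = 1$; each such point equals $\homot((0, y_2, \ldots, y_d), x_1)$, so $\sph \subset \homot$. Conversely, every point in the image of $\homot$ has real part on the $\e_1$-axis, so any point of the real hyperboloid having a nonzero real coordinate in positions $2, \ldots, d$ cannot lie in $\homot$, giving $\hyper^{\pm} \not\subset \homot$.

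No substantive obstacle is anticipated; the entire argument is a careful translation of Lemma \ref{lem:grad_quadric} via the sign flip $\hq = -h$. The main source of possible confusion is that both critical points have the same Morse index and both pairs of invariant submanifolds have the same dimension, so one must track carefully which real sheet of the hyperboloid corresponds to which critical point.
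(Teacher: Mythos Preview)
Your approach is correct and a bit different from the paper's. The paper gives a self-contained argument: it verifies directly that $\sph$ and $\hyper^\pm$ are invariant submanifolds for the gradient flow (using that the gradient of a real function preserves the real subspace, and the analogous statement after the coordinate change $(x_1,\ldots,x_d)\mapsto(s_1,is_2,\ldots,is_d)$ for $\sph$), and then reads off which manifold is stable and which unstable from the ranges of $\hq$ on $\hyper^+$, $\sph$, $\hyper^-$, namely $(-\infty,-1]$, $[-1,1]$, $[1,\infty)$. You instead deduce everything from Lemma~\ref{lem:grad_quadric} via the observation $\hq=-h$, so that the downward $\hq$-flow is literally the same vector field as the flow treated in the lemma; after that it is pure bookkeeping. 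Your route is more economical given that Lemma~\ref{lem:grad_quadric} is already in the paper, while the paper's route is independent of that lemma and identifies the roles of the manifolds by a clean range argument rather than by tracing sign swaps.

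One small caution: your phrasing ``the real sheet of $\singq\cap\R^d$ through $-\e_1$'' correctly picks out $\hyper^-$ (since $-\e_1$ has $x_1=-1<0$), and your answer at $\e_1$ correctly picks out $\hyper^+$. The statement of the proposition as printed has some $\pm$ slips in the labels; your approach in fact yields the correct assignments, so do not be alarmed if the labels you obtain do not match the printed $\hyper^+$/$\hyper^-$ literally.
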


\begin{figure}[!ht] 
\centering
\includegraphics[width=2.9in]{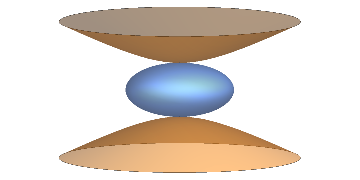}
\caption{Stable and unstable manifolds at the critical points}
\label{fig:generators}
\end{figure}

\noindent{\sc Proof:} Once we check that $\sph, \hyper^+$ and $\hyper^-$
are invariant manifolds for the gradient flow on $\singq$, the proposition 
follows from the dimensions and the fact the ranges of $\hq$ on $\hyper^+$, 
$\sph$ and $\hyper^-$ respectively are $[1,\infty)$, $[-1,1]$ and 
$(-\infty , -1]$.  Invariance of $\hyper^\pm$ follow from the fact 
that the gradient is a real map (the gradient at real points is real)
and therefore the real subspace, of which $\hyper^\pm$ are connected
components, is preserved by gradient flow.  Invariance of $\sph$  
follows from the same argument after reparameterizing via 
$(x_1 , \ldots , x_d) = (s_1 , i \, s_2 , \ldots , i \, s_d)$.
$\Cox$

\section{Perturbation of the variety} \label{sec:perturbation}

Instead of working directly with $Q$, we consider the small perturbations 
$Q_c (\zz) := Q(\zz) - c$. As above, let $\sing_c$ denote the zero set of $Q_c$, 
let $\omega_c=(P/Q_c^k)d\zz/\zz$ denote the corresponding Cauchy $d$-form, and let
$\M^c=\Comp_*^d-\sing_c$ denote the points where $\omega_c$ is analytic.
Below we collect several results on the behavior of this deformation.

\begin{pr}[stable behavior] \label{pr:stable}
Under the setup of the previous paragraph,
\begin{enumerate}[(i)]
\item For sufficiently small $|c|> 0$ the variety $\sing_c$ is smooth. 
\label{item:1}
\item For any index $\rr$, the coefficient of the power series expansion 
for $F_c=P/Q_c^k$ given by \eqref{eq:cauchy},
$$ a_\rr(c):=\left ( \frac{1}{2 \pi i} \right )^d \int_T \zz^{-\rr} 
   \frac{P(\zz)}{Q_c^k} \frac{d\zz}{\zz},$$
is holomorphic in the disk $|c|<|Q(0)|$.  In particular, any given 
coefficient is continuous at $c=0$ as a function of $c$.
\label{item:2}
\end{enumerate}
\end{pr}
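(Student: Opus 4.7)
The plan is to treat the two parts separately, exploiting throughout that $F_c(\zz) = P(\zz)/(Q(\zz) - c)^k$ is jointly holomorphic in $(\zz, c)$ on the open set $\{(\zz, c) : Q(\zz) \neq c\}$.

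For \textbf{part (i)}, $\sing_c$ fails to be smooth exactly at points where both $Q(\zz) = c$ and $\grad Q(\zz) = \zero$, so the set of ``bad'' parameters is the image $Q(C)$ of the algebraic variety $C := \{\zz \in \Comp_*^d : \grad Q(\zz) = \zero\}$ under $Q$. This image is a constructible subset of $\Comp$ by Chevalley's theorem, and has Lebesgue measure zero by Sard's theorem; the only such constructible subsets of $\Comp$ are the finite ones (on an irreducible component of $C$, $Q$ is either constant or has cofinite image, and the latter is ruled out by Sard). Since $\zz_*$ contributes $0 \in Q(C)$, we can pick $\delta > 0$ so that the only critical value in $\{|c| < \delta\}$ is $0$; for all $0 < |c| < \delta$ the variety $\sing_c = Q^{-1}(c)$ is then a regular level set and hence smooth.

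For \textbf{part (ii)}, fix $0 < r < Q(\zero)$ and set $\eta := (Q(\zero) - r)/2$. By continuity of $Q$ at $\zero$, there is a polydisk $U \ni \zero$ with $|Q(\zz) - Q(\zero)| < \eta$ for $\zz \in \overline{U}$, whence
\begin{equation*}
|Q(\zz) - c| \;\geq\; |Q(\zero) - c| - |Q(\zz) - Q(\zero)| \;>\; (Q(\zero) - r) - \eta \;=\; \eta
\end{equation*}
for all $\zz \in \overline{U}$ and $|c| \leq r$. Consequently $F_c(\zz)$ is jointly analytic on a neighborhood of $\overline{U} \times \{|c| \leq r\}$, and choosing any torus $T' \subset U$, the integral
\begin{equation*}
b_\rr(c) \;:=\; \frac{1}{(2\pi i)^d} \int_{T'} \zz^{-\rr} \frac{P(\zz)}{Q_c(\zz)^k}\,\frac{d\zz}{\zz}
\end{equation*}
is holomorphic on $\{|c| < r\}$ by differentiation under the integral sign.

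To finish I would match $b_\rr$ with $a_\rr$ on a neighborhood of $c = 0$: since $Q(\zero) > 0$, the small torus $T'$ sits, together with the original $T$, in the component $B$ of $\amoeba(Q)^c$ associated with the given expansion, and by convexity of $B$ their log-moduli are joined by a segment in $B$. For $|c|$ sufficiently small this segment remains in $\amoeba(Q_c)^c$, so its $\Ll$-preimage cobords $T$ and $T'$ inside $\M^c$, and Cauchy's theorem gives $b_\rr(c) = a_\rr(c)$ for $c$ near $0$. Hence $b_\rr$ is the unique holomorphic extension of $a_\rr$ to $\{|c| < r\}$, and letting $r \uparrow Q(\zero)$ concludes. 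The only conceptual subtlety is that the fixed torus $T$ typically fails to avoid $\sing_c$ once $|c|$ grows past $\min_T|Q|$, so the proposition should be read as asserting the existence of a holomorphic extension, obtained by retreating to a contour $T'$ near $\zero$ where $|Q_c|$ admits a uniform lower bound.
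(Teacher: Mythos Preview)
Your argument is correct. For part~(i) you and the paper do essentially the same thing: the paper cites the Bertini--Sard theorem to say that the singular values of $Q$ form a finite set, while you spell this out via Chevalley plus Sard on the critical locus $C=\{\grad Q=\zero\}$.

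For part~(ii) the approaches genuinely differ. The paper expands the integrand as a binomial series in $c$,
\[
\frac{P}{(Q-c)^k}=\sum_{l\geq 0}\binom{-k}{l}\frac{P}{Q^{k+l}}\,c^l,
\]
and integrates term by term over a torus in the domain of holomorphy of $F$, observing that each term is bounded there. Your route---bounding $|Q_c|$ below on a small torus $T'$ near $\zero$, invoking holomorphy under the integral sign, and then matching $b_\rr$ with $a_\rr$ by deforming $T'$ to $T$ through $\M^c$---is more hands-on but also more explicit about the analytic continuation. In particular you flag a point the paper leaves tacit: the original torus $T$ need not satisfy $\min_T|Q|\geq|Q(\zero)|$, so to reach all of $|c|<|Q(\zero)|$ one must move the contour toward the origin. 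The paper's phrase ``any torus in the domain of holomorphy of $F$'' is presumably meant to license exactly this move, but your version makes the contour deformation, and hence the extension of $a_\rr(c)$ beyond $\min_T|Q|$, visible rather than implicit.
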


\noindent{\sc Proof:} 
The first statement is follows from the Bertini-Sard theorem (the values
of $c$ that make $\sing_c$ singular is a finite algebraic set).  
The second follows 
from the fact that each term in the (converging, under our assumptions) 
expansion of
$$ \frac{P}{(Q-c)^k}=\sum_{l\geq0} {{-k}\choose{l}} \frac{P}{Q^{k+l}} c^l $$
is holomorphic and thus integrable over any torus in the domain 
of holomorphy of $F$, and the modulus of each term is bounded.
$\Cox$

We will need to understand the local behavior of $h_\rr$ on the smooth 
varieties $\sing_c$ near $\zz_*$.  The following proposition shows that
the perturbed varieties have the same geometry as the limiting quadric
described in Section~\ref{sec:quadric}.
\begin{pr}[local behavior] \label{pr:loc}
Assume that $\rhat$ strictly supports the tangent cone $T_{\xx_*} (B)$.
Then
\begin{enumerate}[(i)]
\item There is a $\delta > 0$ such that for sufficiently small  
$|c|\neq 0$, there are precisely two critical points of $h_{\rhat}$ 
on the variety $\sing_c$ in the 
ball $B_\delta (\zz_*)$.  These points tend to $\zz_*$ as $c \to 0$.
\label{item:3}
\item If $c$ is positive and real, these critical points $\zz_c^\pm$
are real, and can be chosen such that
\begin{equation}
h_{\rhat} (\zz_c^+) > h_{\rhat} (\zz_*) > h_{\rhat} (\zz_c^-) .
\end{equation}
\label{item:4}
\end{enumerate}
\end{pr}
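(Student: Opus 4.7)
My plan is to reduce the critical-point problem to a local model via the normal form guaranteed by Definition~\ref{def:quadratic}, and then solve the Lagrange multiplier equations using the implicit function theorem in the small parameter $\mu := -1/\lambda$. First, introduce logarithmic coordinates $\uu$ via $\zz = \zz_* \exp(\uu)$, in which $h_{\rhat}(\zz) = h_{\rhat}(\zz_*) - \Re(\rhat \cdot \uu)$, and then perform a real linear change of variables $\vv = A\uu$ that puts the quadratic part of the pullback of $Q$ into the standard form $q_2(\vv) = v_d^2 - \sum_{j<d} v_j^2$; set $\tilde{\rr} := (A^{-1})^T \rhat$, so the height becomes $h_{\rhat}(\zz_*) - \Re(\tilde{\rr}\cdot\vv)$. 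The tangent cone $T_{\xx_*}(B)$ is one of the components of $\{q_2 > 0\}$, and strict support of $\rhat$ translates exactly into
\[
\kappa^2 \;\assign\; \tilde{\rr}_d^{\,2} - \sum_{j<d} \tilde{\rr}_j^{\,2} \;>\; 0.
\]
Because $-\sum_j \hat r_j \log z_j$ is holomorphic, critical points of the real function $h_{\rhat}$ on the complex manifold $\sing_c$ coincide with critical points of this linear holomorphic function, which by Lagrange multipliers amount to solving the system $\nabla Q(\vv) = \mu \tilde{\rr}$, $Q(\vv) = c$ for some $\mu \in \C$.

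Since $D^2 Q(0) = D^2 q_2$ is nonsingular, the holomorphic map $\vv \mapsto \nabla Q(\vv)$ is a local biholomorphism near $0$, so I may invert to write $\vv = G(\mu\tilde{\rr})$ with $G$ defined on a fixed, $c$-independent neighborhood of the origin, $G(0)=0$, and $G'(0) = (D^2 q_2)^{-1}$. Substitution reduces the system to a scalar equation $f(\mu) \assign Q(G(\mu\tilde{\rr})) = c$, and differentiating using $\nabla Q(G(\mu\tilde{\rr})) = \mu \tilde{\rr}$ yields $f(0) = f'(0) = 0$ together with $f''(0) = \tilde{\rr}^T (D^2 q_2)^{-1} \tilde{\rr} = \kappa^2/2$. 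Hence $f(\mu) = \mu^2 \phi(\mu)$ with $\phi$ holomorphic and $\phi(0) = \kappa^2/4 \neq 0$, so for small $|c| \neq 0$ the equation $f(\mu) = c$ has exactly two solutions
\[
\mu_\pm(c) \;=\; \pm\, \frac{2\sqrt{c}}{\kappa}\bigl(1 + O(\sqrt{c})\bigr),
\]
producing via $\vv = G(\mu\tilde{\rr})$ two critical points $\zz_c^\pm$ that tend to $\zz_*$ as $c \to 0$. Uniqueness in a fixed ball $B_\delta(\zz_*)$ follows from the $c$-independent bijection between critical points and solutions of $f(\mu) = c$, establishing (i). For (ii), when $c > 0$ is real the Lagrange system is defined over $\R$ (since $A$, $Q$, and $\rhat$ are all real), so the real implicit function theorem produces real solutions that must agree with the complex ones just found. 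At a critical point the first equation gives $v_j = -\mu\tilde{\rr}_j/2 + O(\mu^2)$ for $j<d$ and $v_d = \mu\tilde{\rr}_d/2 + O(\mu^2)$, hence $\tilde{\rr}\cdot\vv = (\mu/2)\kappa^2 + O(\mu^2)$, and therefore
\[
h_{\rhat}(\zz_c^\pm) - h_{\rhat}(\zz_*) \;=\; -\Re(\tilde{\rr}\cdot\vv) \;=\; \mp\,\kappa\sqrt{c} + O(c),
\]
producing strict opposing inequalities and fixing the labeling.

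The one subtle point I anticipate is ensuring that the ball $B_\delta(\zz_*)$ can be chosen independently of $c$ and that no stray critical points at distance $\gg \sqrt{c}$ from $\zz_*$ inside $B_\delta$ escape the analysis. This is handled precisely by the global-in-$\mu$ biholomorphism $G$: on the $c$-independent domain where $G$ is defined, critical points of $h_{\rhat}$ on $\sing_c \cap B_\delta(\zz_*)$ are in bijection with solutions of the single scalar equation $f(\mu) = c$, which is a genuine perturbed quadratic in $\mu$ and has exactly two small roots. The remaining bookkeeping, which I expect to be routine, amounts to tracking the real structure in (ii) and verifying that $\phi(\mu) > 0$ throughout the relevant range so the $\mu_\pm$ are indeed real for $c > 0$.
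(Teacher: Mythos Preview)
Your proposal is correct and takes essentially the same approach as the paper. Both arguments identify the critical points of $h_{\rhat}$ on $\sing_c$ with the intersection of $\sing_c$ and the \emph{log-polar curve} (the locus where $\nabla Q$ is proportional to $\tilde{\rr}$), then use the nondegeneracy of $D^2 q_2$ together with the supporting condition $\kappa^2>0$ to see that this curve meets $\sing$ with multiplicity~2 at $\zz_*$, splitting into two simple points for small $c\neq 0$.

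The only stylistic difference is that the paper phrases the reduction geometrically---normalizing so that $Q = z_1^2 - \sum_{k\ge 2} z_k^2 + O(|\zz|^3)$ and $h = z_1 + \sum_{k\ge 2} a_k z_k + O(|\zz|^2)$ with $\sum a_k^2 < 1$, writing the log-polar curve as $z_k = -a_k z_1 + O(|\zz|^2)$, and invoking transversality---whereas you invert $\nabla Q$ analytically via the implicit function theorem to obtain the explicit parametrization $\vv = G(\mu\tilde\rr)$ and reduce to the scalar equation $f(\mu)=c$ with a double zero at $\mu=0$. Your route buys the explicit asymptotics $\mu_\pm \sim \pm 2\sqrt{c}/\kappa$ and the height estimate $h_{\rhat}(\zz_c^\pm)-h_{\rhat}(\zz_*) = \mp\kappa\sqrt{c}+O(c)$, which the paper's terse ``similar computation'' leaves implicit. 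One small caution: your reality argument in~(ii) requires not just that $A$ and $q_2$ be real but that the full germ of $Q$ at $\zz_*$ be real, so that $G$ commutes with conjugation; this is implicit in the paper as well (and holds in all the applications), but you might flag it.
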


\noindent{\sc Proof:} 
By part~$(i)$ of Proposition~\ref{pr:stable}, 
$\sing_c$ is smooth.
The function $h_{\rhat}$ is the real part of the logarithm of 
the locally holomorphic 
function $\zz^{\rhat}$ near $\zz_*$, hence it has a critical point 
on the smooth complex manifold $\sing_c$ if and only if
$\zz^\rr$ does, i.e., if $d\zz^\rr$ is collinear with $dQ$.  This 
latter condition defines the so-called {\bf log-polar} variety. 
A local computation implies that under our conditions 
this is a smooth curve, intersecting $\sing$ with multiplicity~2
at $\zz_*$ as long as $\rr$ is not tangent to the tangent cone 
$T_{\Ll (\zz_*)} (\log \sing)$.

Indeed, one can find a real affine-linear coordinate change such that 
in the new coordinates, centered at $\zz_*$, 
$$Q = z_1^2 - \sum_{k \geq 2} z_k^2 + O(|\zz|^3) \qquad\text{and}\qquad
   h = z_1 + \sum_{k \geq 2} a_k z_k + O(|\zz|^2) \, ,  $$
where our conditions on $\rr$ imply $\sum_k a_k^2 < 1$.
In these coordinates, the log-polar variety is given by the 
equation $z_k = -a_k z_0 + O(|\zz|^2)$. Thus,
the log-polar curve intersects $\sing_c$ transversely for 
$|c| \neq 0$ small, and consists of $2$ geometrically distinct points.
A similar computation implies the second statement for real $c$.
$\Cox$

The main work in proving Theorem~\ref{th:lacuna} will be to prove 
the following result. 

\begin{thm} \label{th:actual} 
Assume the hypotheses of Theorem~\ref{th:lacuna}. For
$\ee>0$ and $c_*>0$ small enough, and for any
$\rhat \in K \subseteq \nbdhat$, there is a cycle $\Gamma(\rhat)$
such that for $|c|<c_*$,
\begin{enumerate}[(i)]
\item The cycle $\Gamma(\rhat)$ lies in the set $\M_c(-\ee)$; in other words,
the cycle $\Gamma(\rhat)$ lies below the height level $h_{\rhat}(\zz_*) - \ee$ 
for all $\rhat \in K$, and it avoids $\sing_c$ for all $c$ such that $|c|<c_*$.
\item There is a chain 
$\Gamma_c \subseteq \M^c$ such that $[\TT] \simeq [\Gamma_c] + [\Gamma(\rhat)]$ 
in $H_d (\M_c)$.
\item The cycle $\Gamma(\rhat)$ can be chosen 
to be $[\leray\gamma(\rhat)]+[\TT(\yy)]$, where $\gamma(\rhat)$ 
is a $(d-1)$ cycle in $\sing(-\ee)$, and $\yy$ is in a descending
component $B'$ of the complement of $\amoeba (Q)$ with respect to $B$
(see Definition \ref{def:descending comp}).
\item For fixed $\rr$ as $c \to 0$, 
\begin{eqnarray}
\int_{\Gamma} \zz^{-\rr}\omega_c & \to & \int_\Gamma  \zz^{-\rr}\omega \hspace{1.5in} 
   \label{eq:omega_c} \\
\int_{\Gamma_c}  \zz^{-\rr}\omega_c & \to & 0 \label{eq:gamma_c} , \mathrm{\ if\ } d>2k.
\end{eqnarray}
\end{enumerate}
\end{thm}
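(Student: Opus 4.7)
The plan is to build $\Gamma(\rhat)$ and $\Gamma_c$ from the intersection cycle in the smooth perturbed variety $\sing_c$, and then isolate a vanishing local piece near $\zz_*$ by a relative-homology decomposition. I would pick $\xx'$ in a descending component $B'$ (Definition~\ref{def:descending comp}) and take the standard cobordism $\bH$ from $\TT(\xx)$ to $\TT(\xx')$ inside $\Comp_*^d$. By Proposition~\ref{pr:stable}(i), $\sing_c$ is smooth for $0<|c|<c_*$; after an arbitrarily small perturbation of $\bH$ one obtains transversality and an intersection $(d-1)$-cycle $\sigma_c\subset(\sing_c)_*$, and by Proposition~\ref{pr:intersection} the identity $[\TT] = [\leray\sigma_c] + [\TT(\xx')]$ holds in $H_d(\M^c)$.

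The central step is a relative-homology decomposition of $\sigma_c$ in $H_{d-1}((\sing_c)_*,\,\sing_c(-\ee))$. By Proposition~\ref{pr:loc}, for small $c>0$ the only critical point of $h_{\rhat}|_{\sing_c}$ lying above height $h_{\rhat}(\zz_*)-\ee$ near $\zz_*$ is $\zz_c^+$, and under the affine rescaling $\zz-\zz_*\mapsto c^{-1/2}(\zz-\zz_*)$ the pair $(\sing_c,\sigma_c)$ converges locally to the model pair $(\singq,\gamma)$ from Theorem~\ref{th:hyp-sphere}. The folding homotopy of Theorem~\ref{th:null}, which uses $d$ even in an essential way to cancel the orientations on the two sheets of the unstable manifold, shows that in the model the intersection cycle is homologous to the sphere $\north$. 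Transporting this homotopy back through the rescaling, I would obtain a $(d-2)$-sphere $S_c$ of diameter $O(\sqrt{c})$ centered at $\zz_*$ together with an absolute $(d-1)$-cycle $\gamma(\rhat)\subset\sing(-\ee)$ such that
$$\sigma_c \;\simeq\; \gamma(\rhat) + S_c \quad\text{in } H_{d-1}\bigl((\sing_c)_*,\,\sing_c(-\ee)\bigr).$$
Setting $\Gamma_c := \leray S_c$ and $\Gamma(\rhat) := \leray\gamma(\rhat) + \TT(\xx')$ then delivers items~(ii) and~(iii); the compactness of $K$ allows a single choice of $\ee>0$ and $c_*>0$ uniform in $\rhat\in K$, which gives~(i).

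For part~(iv), the first convergence is uniform convergence of $\omega_c$ to $\omega$ on the compact chain $\Gamma(\rhat)$, which by construction is disjoint from $\sing_c$ for all $|c|<c_*$. The second is the dimension count that is the quantitative content of the lacuna. On the part of $\sing_c$ near $\zz_*$ the gradient $\nabla Q_c$ has magnitude of order $\sqrt{c}$ (Proposition~\ref{pr:loc}); the sphere $S_c$ has volume $O(c^{(d-2)/2})$, and the residue formula in Definition~\ref{def:residue} shows that the residue integrand on $S_c$ is $O(|\nabla Q_c|^{-(k-1)})=O(c^{-(k-1)/2})$. Combining these bounds gives $\int_{\leray S_c}\omega_c = O(c^{(d-k-1)/2})$, which tends to zero as $c\downarrow 0$ since the hypotheses of Theorem~\ref{th:lacuna} force $d\geq 2k+2 > k+1$.

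The step I expect to be the main obstacle is the relative-homology identification $\sigma_c\simeq \gamma(\rhat)+S_c$, together with uniformity of the decomposition in $\rhat\in K$. The model computation of Section~\ref{sec:quadric} takes place on the smooth quadric $\singq$, but transferring it to $\sing_c$ requires checking that the rescaling is compatible with the relative homology and that the folding chains constructed in the model can actually be realized on the perturbed smooth varieties (not merely in the $c\to 0$ limit). Once this is in hand, items~(i)--(iv) follow by the bookkeeping above.
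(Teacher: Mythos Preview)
Your overall architecture is right, but two of the load-bearing steps are handled differently in the paper, and one step you gloss over is in fact nontrivial.

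\textbf{The relative homology identification.} You propose to establish $\sigma_c\simeq S_c$ in $H_{d-1}((\sing_c)_*,\sing_c(-\ee))$ by transporting the folding homotopy of Theorem~\ref{th:null} through the $c^{-1/2}$ rescaling. The paper explicitly does \emph{not} do this; the material of Section~\ref{sec:quadric} is declared heuristic and is ``subsumed by later arguments.'' Instead, the paper first shows (Proposition~\ref{pr:local}) that the relative group $H_-$ is free of rank~2, generated by the vanishing sphere $\sph_-$ and the real lobe $\hyper_-$, then computes the intersection pairing of $H_-$ with the dual group $H_+$ (Proposition~\ref{pr:pairing}). The key observation is that the standard cobordism meets $\hyper_+$ transversely in a single point and misses $\sph_+$ entirely, forcing the coefficients of $\sigma_c$ in the $(\hyper_-,\sph_-)$ basis to be $(0,\pm 1)$ when $d$ is even (because $\chi(S^{d-1})=0$). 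This pairing argument is more robust than transporting an explicit homotopy: it avoids precisely the compatibility-under-rescaling issue you flag as the main obstacle.

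\textbf{Independence of $\gamma$ from $c$.} You write ``together with an absolute $(d-1)$-cycle $\gamma(\rhat)\subset\sing(-\ee)$'' as if this comes for free, but the decomposition of Lemma~\ref{lem:split} only gives $\gamma_c\subset\sing_c(-\ee)$, a different cycle on a different variety for each $c$. Producing a single $\gamma\subset\sing(-\ee)$ whose tube $\leray\gamma$ avoids \emph{all} $\sing_c$ simultaneously is the content of Lemma~\ref{lem:lift}: one uses the product structure of a tubular neighborhood of the smooth part of $\sing$ to project every $\gamma_c$ onto a fixed $\gamma_*\subset\sing$ and then takes a tube of radius $c_*$. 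Without this step you do not get a $c$-independent $\Gamma(\rhat)$, so conclusions~(i) and~\eqref{eq:omega_c} are not yet justified.

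\textbf{The dimension count.} Two small errors: $S_c$ is a $(d-1)$-sphere, not a $(d-2)$-sphere, and the residue density on $S_c$ scales like $|\nabla Q_c|^{-(2k-1)}$ rather than $|\nabla Q_c|^{-(k-1)}$ (the volume form $d\sigma$ with $dQ\wedge d\sigma=d\zz$ already contributes one inverse gradient, and each of the $k-1$ normal derivatives contributes two more). The paper sidesteps this by working directly with $\int_{\leray S_c}\omega_c$ in rescaled coordinates, where the Jacobian $c^{d/2}$ and the denominator $c^{-k}$ give $c^{d/2-k}$ on the nose. Your estimate $c^{(d-k-1)/2}$ happens still to vanish under $d>2k$, but it is not the correct rate.
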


Theorem~\ref{th:actual} is proven in Section~\ref{sec:proof}.

\noindent{\sc Proof of Theorem~\ref{th:lacuna}:} 
The first statement of Theorem~\ref{th:lacuna} 
follows immediately from Theorem~\ref{th:actual},
as
\begin{equation}
a_\rr = \lim_{c \downarrow 0} a_{\rr,c} =\lim_{c \downarrow 0} \int_{\TT} \zz^{-\rr} \omega_c =
\lim_{c \downarrow 0} \left [ \int_{\Gamma(\rhat)} \zz^{-\rr} \omega_c 
   + \int_{\Gamma_c} \zz^{-\rr} \omega_c.\right ] = \int_{\Gamma(\rhat)} \zz^{-\rr} \omega 
\end{equation}
by~\eqref{eq:omega_c},~\eqref{eq:gamma_c} and~\eqref{eq:exact}. The 
uniform bound in $\rhat$ follows from compactness of $K$. Indeed, 
any cycle $\Gamma(\rhat)$ satisfies the conclusions for all $\rhat'$ 
in small enough open vicinity of $\rhat$; choosing a finite cover of 
$K$ by such open vicinities, we prove the claim.
To obtain the second statement of Theorem~\ref{th:lacuna}, 
we use Proposition \ref{pr:intersection} to see 
that
\begin{equation}
\int_{\TT(\xx')} \zz^{-\rr} \omega_c 
\end{equation}
vanishes for all but finitely many $\rr$. Together with~$(iii)$
of Theorem~\ref{th:actual}, this implies the conclusion of 
Theorem~\ref{th:lacuna} for polynomial numerators.
$\Cox$

\section{Local homology near quadratic point} \label{sec:relative}

Recall our sign choice for $Q$, which implies that $Q$ is positive 
on the real part of the domain of holomorphy for the Laurent
expansion under consideration.  
We are interested in the local topology of the intersections of the 
singular set $\sing_c$ with the height function $h=|\zz^\rr|$. 
We start with a result proved in~\cite[Lemma~1.3]{AVG2}, though it
dates back at least to~\cite{milnor-hypersurfaces}.

\begin{pr} \label{pr:total space}
There exist $\delta, \delta' > 0$ such that if $B = B(\zz_*,\delta)$ 
denotes the ball of radius $\delta$ about $\zz_*$ then 
 $\sing_c \cap B$ is diffeomorphic to the total space of 
the tangent bundle to the $(d-1)$-dimensional sphere
for all $c\in\C$ with $0<|c| < \delta'$.  In particular, 
the (absolute) homology groups of $\sing_c \cap B$ are trivial in dimensions 
not equal to $d-1$, and $H_{d-1}(\sing_c\cap B) \cong \Int$.
$\Cox$
\end{pr}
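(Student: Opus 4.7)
The plan is to reduce to the standard model of an ordinary double point (the $A_1$ singularity) and then invoke the classical Milnor-fiber description. Since $Q$ has a nondegenerate quadratic part at $\zz_*$ --- the signature $(1,d-1)$ in Definition~\ref{def:quadratic} becomes irrelevant after complexification, as every nondegenerate complex quadratic form is equivalent to $\sum_j w_j^2$ --- the holomorphic Morse lemma produces a holomorphic change of coordinates $w$ on a neighborhood $U$ of $\zz_*$ with $w(\zz_*)=0$ such that
\begin{equation*}
Q(\zz) = w_1(\zz)^2 + \cdots + w_d(\zz)^2
\end{equation*}
throughout $U$. Choosing $\delta$ so that $\overline{B(\zz_*,\delta)} \subset U$ and $w(B(\zz_*,\delta))$ contains a Euclidean ball $B(0,\delta_0)$, the problem reduces to analyzing the local Milnor fiber
\begin{equation*}
V_c \;:=\; \{ w \in \C^d : w_1^2 + \cdots + w_d^2 = c \} \cap B(0,\delta_0)
\end{equation*}
for $0 < |c| < \delta'$ sufficiently small.

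Next I would make the standard explicit identification. Rescaling $w \mapsto w/\sqrt{c}$ reduces to $c=1$. Writing $w = u + iv$ with $u,v \in \R^d$, the equation $\sum_j w_j^2 = 1$ becomes $|u|^2 - |v|^2 = 1$ together with $u \cdot v = 0$. The map $\Phi(u+iv) := (u/|u|,\,v)$ is a diffeomorphism from the global quadric $\{w \in \C^d : \sum_j w_j^2 = 1\}$ onto the total space $TS^{d-1}$ of the tangent bundle of the unit sphere, with inverse $(x,v) \mapsto \sqrt{1+|v|^2}\,x + iv$. Truncating by $B(0,\delta_0)$ corresponds to an open disk subbundle of $TS^{d-1}$, which is itself diffeomorphic to the full tangent bundle (via radial rescaling in the fibers), and the flow of the gradient of $|w|^2$ along the quadric provides a deformation retraction onto the zero section $S^{d-1}$.

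Pulling back through $\Phi$ and the Morse coordinate change yields the claimed diffeomorphism $\sing_c \cap B \cong TS^{d-1}$; the homology conclusion is then immediate from the deformation retraction onto $S^{d-1}$ (with ``trivial in dimensions not equal to $d-1$'' read in the reduced sense, since $H_0$ is of course $\Int$). The main obstacle is purely technical: one must verify that for $c$ small enough, cutting the affine quadric by the ball really does produce the full local Milnor fiber rather than a proper subset or spurious boundary components. This is handled by Ehresmann's theorem applied to $Q : U \setminus \{\zz_*\} \to \C$, which is a submersion in a deleted neighborhood of $\zz_*$, so that all nearby fibers restricted to a fixed small ball are diffeomorphic --- precisely the argument of~\cite{milnor-hypersurfaces} underlying the citation to~\cite{AVG2} already made in the statement.
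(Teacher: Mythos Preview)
Your argument is correct and is precisely the standard Milnor-fiber computation for an $A_1$ singularity that the paper invokes by citing~\cite[Lemma~1.3]{AVG2} and~\cite{milnor-hypersurfaces}; the paper gives no proof of its own beyond those references. The one place to be slightly more careful is that $w(B(\zz_*,\delta))$ is not a round ball, so $\sing_c \cap B$ is not literally $V_c$ as you define it; but your final paragraph already handles this, since transversality of the level sets $\{Q=c\}$ to $\partial B(\zz_*,\delta)$ for small $c$ (which follows because $\zz_*$ is an isolated critical point of $Q$) lets Ehresmann identify $\sing_c \cap B$ with the round-ball model $V_c$ up to diffeomorphism.
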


Let $h_*:=h(\zz_*)$. What we require for our results is 
a description of the relative homology group 
$H_{d-1} ( (\sing_c)_* \cap B , \sing_c \cap B (h \leq h_*-\ee)),$ 
together with explicit generators.  To compute these we start with the
homogeneous situation and then perturb.  Denote by $q$ the 
quadratic part of $Q$ at $\zz_*$.  This is a real quadratic form, 
invariant with respect to conjugation, with signature $(1,d-1)$ 
on the real part of the tangent space at $\zz_*$.  We denote the
two convex cones where $q \geq 0$ as $C_\pm$, and extend
Definition~\ref{def:tangent_cone} by considering supporting
vectors to $C_+$ as well as $C_-$.

Consider the following three surfaces in $\C^d$ of respective 
co-dimensions 1, 1 and 2: $(i)$ the boundary $S$ of the unit ball; 
(ii) the hyperplane $H := \{ \xx + i \yy : \xx \cdot \rhat = 0\}$ 
orthogonal to the real vector $\rhat$; and $(iii)$ the complex hypersurface 
$v := \{ q=0 \}$ defined by the quadric.  The transverse intersection 
of $S$ and $H$ is the equator of~$S$.  

\begin{lem}\label{lem:transversal}
If $\rr$ is supporting then $v$ intersects $S \cap H$ transversely.
\end{lem}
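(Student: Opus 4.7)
The plan is to work in the local linear coordinates at $\zz_*$ furnished by Proposition~\ref{pr:loc}, in which $q(\uu) = u_1^2 - u_2^2 - \cdots - u_d^2$ and the supporting hypothesis on $\rr$ translates to $q(\rhat) > 0$ (with $\rhat$ identified with its image in the new coordinates, a real vector whose square-norm under $q$ equals $1 - \sum_{k \geq 2} a_k^2 > 0$). Writing $\zz = \xx + i\yy$ and setting $M := \mathrm{diag}(1,-1,\ldots,-1)$, $\eta := M\xx$, $\mu := M\yy$, the four real conormals at a point $p = \xx + i\yy \in v \cap S \cap H$ to the defining equations $\|\zz\|^2 = 1$, $\xx\cdot\rhat = 0$, $\Re q = 0$, and $\Im q = 0$ are, up to positive scalars,
\[
(\xx,\yy), \qquad (\rhat, 0), \qquad (\eta, -\mu), \qquad (\mu, \eta),
\]
so transversality at $p$ reduces to showing the linear independence of these four vectors in $\R^{2d}$.

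Using the three defining equations $\Re q = q(\xx) - q(\yy) = 0$, $\Im q = 2B(\xx,\yy) = 0$ (with $B$ the polarisation of $q$), and $\xx\cdot\rhat = 0$, I would first verify that $(\xx,\yy)$ is orthogonal to each of the other three vectors, while $(\eta,-\mu)\perp(\mu,\eta)$ automatically from sign cancellation. Combined with the normalisations $\|(\eta,\pm\mu)\|^2 = \|\xx\|^2 + \|\yy\|^2 = \|(\xx,\yy)\|^2 = 1$ from $p \in S$, the Gram matrix of the four vectors block-diagonalises, and its determinant reduces to
\[
D = \|\rhat\|^2 - (\eta\cdot\rhat)^2 - (\mu\cdot\rhat)^2.
\]
It therefore suffices to show $D > 0$.

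The main step, and the only place the supporting hypothesis enters, is a Cauchy--Schwarz argument: since $\eta\cdot\rhat = \xx \cdot M\rhat$, $\mu\cdot\rhat = \yy \cdot M\rhat$, and $\|M\rhat\| = \|\rhat\|$, we get
\[
(\eta\cdot\rhat)^2 + (\mu\cdot\rhat)^2 \;\leq\; (\|\xx\|^2 + \|\yy\|^2)\|\rhat\|^2 = \|\rhat\|^2,
\]
with equality iff $\xx$ and $\yy$ are both scalar multiples of $M\rhat$. Then $q(\rhat) > 0$ rules out the equality case: writing $\xx = t_1 M\rhat$, the constraint $\xx \cdot \rhat = 0$ becomes $t_1 q(\rhat) = 0$, forcing $\xx = 0$; the equation $\Re q = 0$ next gives $q(\yy) = 0$, while $\yy = t_2 M\rhat$ yields $q(\yy) = t_2^2 q(\rhat)$, which forces $\yy = 0$ as well, contradicting $\|\xx\|^2 + \|\yy\|^2 = 1$. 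The only real obstacle is setting up the Gram-matrix computation cleanly enough that the Cauchy--Schwarz structure becomes transparent; the positivity $q(\rhat) > 0$, which is the content of the supporting hypothesis, does double duty, both ensuring the Cauchy--Schwarz bound is non-degenerate and eliminating the equality case.
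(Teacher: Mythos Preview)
Your proof is correct and takes a genuinely different route from the paper's. The paper first uses the supporting hypothesis to make a Lorentz-preserving linear coordinate change so that $\rhat$ becomes $e_1$, and then simply asserts that the differentials of $x_1$, $\Re q$, and $\Im q$ are independent at their common nonzero zeros; the sphere $S$ is handled implicitly, since $v \cap H$ is a cone through the origin and is therefore automatically transverse to any sphere centered there. You instead keep $\rhat$ general (subject only to $q(\rhat) > 0$), include the sphere normal explicitly among the four conormals, and reduce transversality to the nonvanishing of a $4\times 4$ Gram determinant, which you then bound below via Cauchy--Schwarz. Your argument is more self-contained: it treats the sphere on equal footing rather than appealing to homogeneity, and it makes transparent exactly where $q(\rhat) > 0$ enters (in eliminating the Cauchy--Schwarz equality case). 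The paper's version is terser but leaves both the sphere and the ``easy to check directly'' step to the reader.
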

\begin{proof}
By the hypothesis that $\rr$ is supporting, one can choose $h$ as 
one local coordinate, changing the rest of the coordinates so that 
the quadratic form $q$ preserves its Lorentzian form.  In these new 
coordinates it remains to prove that the functions
$$ 
x_1=0, 
\quad
x_1^2 + \sum_2^d y_k^2 - y_1^2 + \sum_2^d x_k^2 = 0, 
\quad 
 x_1 y_1 - \sum_2^d x_k y_k=0 $$
have independent differentials at their common zeros outside of 
the origin.  This can be checked directly. \qed
\end{proof}

\begin{cor}\label{cor:slab}
For $\rho > 0$ small enough there are positive numbers $\epsilon_*$ 
and $c_*$ such that the manifolds 
$\{\zz:\|\zz-\zz_*\| = r\},$  $\{\zz:h(\zz) = h_*(\zz) + \epsilon\}$ 
and $\{\zz:Q(\zz)=c\}$ intersect 
transversely, provided that $\rho/2 < r < \rho$ while $\epsilon < \epsilon_*$
and $|c| <  c_*$.
\end{cor}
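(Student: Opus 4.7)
The plan is to deduce the corollary from Lemma \ref{lem:transversal} via an openness-of-transversality argument combined with compactness. Working in the local coordinates at $\zz_*$ used in the proof of that lemma, one has $Q = q + O(\|\zz\|^3)$ with $q$ the Lorentzian quadratic part, and $h - h_* = L + O(\|\zz\|^2)$ with $L$ a real linear functional on the tangent space. The three submanifolds in the corollary are therefore small perturbations of the homogeneous triple $\{\|\zz\| = r\}$, $\{L = 0\}$, $\{q = 0\}$ considered in the lemma.

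First I would record that the lemma is essentially scale invariant: since $L$ is linear and $q$ is a homogeneous quadratic, the dilation $\zz \mapsto \zz/r$ carries the triple intersection on the sphere of radius $r$ to the triple intersection on the unit sphere without affecting transversality. Hence for every $r > 0$ the three homogeneous submanifolds meet transversely at every point of their common intersection. Equivalently, the $3\times d$ matrix formed by the gradients of $\|\zz-\zz_*\|^2 - r^2$, $L(\zz)$, and the real and imaginary parts of $q(\zz)$ has full rank there, and an appropriate minor (or its squared norm, a continuous function) is strictly positive on the intersection.

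Fix now a small $\rho > 0$ and consider the compact shell $A := \{\zz : \rho/2 \leq \|\zz - \zz_*\| \leq \rho\}$. The intersection of the three homogeneous loci with $A$ is compact, so by the scale-invariant form of the lemma the transversality minor is bounded below by some $\delta_0 > 0$ throughout. By Taylor's theorem, the gradients of $\|\zz-\zz_*\|^2 - r^2$, $h(\zz) - h_* - \epsilon$, $Q(\zz) - c$ differ from those of the homogeneous model by quantities $O(\rho) + O(\epsilon) + O(c)$ uniformly on $A$, while the common zero set moves only by $O(\rho + \epsilon + c)$ in Hausdorff distance (by the implicit function theorem at each transverse point of the homogeneous intersection). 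Choosing $\rho$ small enough and then $\epsilon_* , c_*$ small enough keeps the transversality minor of the perturbed system bounded away from zero on its own common intersection, which is exactly the transversality claim. The main technical point is just uniformity of these estimates in $r \in (\rho/2, \rho)$, and this is automatic from the scale invariance used in the second paragraph; no further geometric input beyond Lemma \ref{lem:transversal} is needed.
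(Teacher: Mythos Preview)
Your argument is correct and essentially identical to the paper's: both invoke Lemma~\ref{lem:transversal} for the homogeneous triple, use its scale invariance, and then pass to the actual $Q$ and $h$ by continuity of the transversality determinants on a compact shell. The only cosmetic difference is that the paper carries out the scaling explicitly (sending the $\rho$-ball to the unit ball and rescaling $Q$ by $\rho^{-2}$, $h$ by $\rho^{-1}$), which makes the orders of magnitude transparent; note also that the gradients of $h-h_*-\epsilon$ and $Q-c$ do not actually depend on $\epsilon$ or $c$, so the ``$O(\epsilon)+O(c)$'' in your gradient estimate is superfluous---those parameters enter only through the Hausdorff displacement of the common zero set, exactly as you then say.
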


\begin{proof}
For a given $\rho>0$, introduce new coordinates in which $\zz_*$ 
is the origin and the $\rho$-ball around $\zz_*$ becomes the unit ball 
in $\Comp^d$, while rescaling $Q$ by $\rho^{-2}$ and $h$ by $\rho^{-1}$. 
The resulting functions become small perturbations (decreasing with $\rho$) of 
the quadratic and linear functions in Lemma \ref{lem:transversal}, 
and their zero sets become small deformations $Q^\rho$ and $H^\rho$ 
of the corresponding varieties.

In particular, the determinants whose nonvanishing witnesses the transversality 
of the varieties of $Q^\rho, H^\rho$ and $S$ are small deformations of the 
determinants witnessing the transversality in Lemma~\ref{lem:transversal}, 
and therefore are non-vanishing on some open neighborhood $U$ of the set 
of solutions to 
$H^\rho=Q^\rho=0$ intersected with the spherical shell where the 
distance to the origin is between, say, $1$ and $1/2$ for small 
enough $\rho$. 

For small enough $\epsilon_*,c_*>0$ the sets $\{|Q^\rho|\leq c_*\}
\cap \{|h^\rho|\leq \epsilon_*\} \cap B_1$ are contained in $U$.
Therefore the varieties $\{Q^\rho=c\}, \{H^\rho=\epsilon\}$ and 
$\{|\zz| = r \}$ are transverse when $|c|\leq c_*, |\epsilon| \leq \epsilon_*$
and $1/2 \leq r \leq 1$.
\qed\end{proof}

We will need one more result on the local geometry of $\sing$ and $\{h=\mathrm{const}\}$.

\begin{lem}\label{lem:bottom}
For $\ee\neq 0$, the intersection of the real hyperplane $x_1=-\ee$ with the quadric
\[
z_1^2-z_2^2-\ldots-z_d^2=c
\]
is homotopy equivalent to a $(d-2)$-dimensional sphere for $|c|$ small enough.
\end{lem}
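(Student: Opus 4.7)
The plan is to parametrize the intersection explicitly and realize it as a smooth family of nonsingular affine complex quadrics over a contractible base. Enforcing the constraint $x_1 = -\ee$ by writing $z_1 = -\ee + i y_1$ with $y_1 \in \R$, I would compute $z_1^2 = -(y_1 + i\ee)^2$ and rewrite the intersection as
\[
M_c \;=\; \Bigl\{ (y_1, z_2, \ldots, z_d) \in \R \times \C^{d-1} \,:\, z_2^2 + \cdots + z_d^2 = w(y_1, c) \Bigr\}, \qquad w(y_1, c) := -(y_1 + i\ee)^2 - c.
\]

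The first step would be to show that $w(y_1, c)$ never vanishes. Since $|(y_1 + i\ee)^2| = y_1^2 + \ee^2 \geq \ee^2$, the reverse triangle inequality gives $|w(y_1, c)| \geq \ee^2 - |c| > 0$ uniformly in $y_1 \in \R$ as long as $|c| < \ee^2$. Consequently each fiber of the projection $\pi : M_c \to \R$ onto the first coordinate is a smooth affine complex quadric $\{ z \in \C^{d-1} : \sum_{j=2}^d z_j^2 = w \}$ with $w \neq 0$. I would then invoke the classical identification (going back to Milnor) of any such quadric with the tangent bundle $TS^{d-2}$: after scaling the $z_j$ by a common phase so that $w$ is a positive real, the splitting $z_j = x_j + i y_j$ produces the defining equations $|x|^2 - |y|^2 = w$ and $x \cdot y = 0$, which are explicit $TS^{d-2}$ coordinates. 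In particular each fiber is homotopy equivalent to $S^{d-2}$.

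To assemble the conclusion, I would construct an explicit global trivialization of $\pi$. Because the curve $y_1 \mapsto w(y_1, c)$ is smooth and avoids $0 \in \C$, a polar decomposition $w(y_1, c) = \rho(y_1) e^{i \theta(y_1)}$ exists with $\rho > 0$ smooth and $\theta$ a smoothly chosen continuous branch of the argument. The fiberwise rescaling $z_j \mapsto \sqrt{\rho(y_1)}\, e^{i \theta(y_1)/2}\, z_j$ then identifies $M_c$ with $\R \times Q$, where $Q = \{ \sum z_j^2 = 1 \} \subset \C^{d-1}$ is the standard unit complex quadric. Since $\R$ is contractible, one concludes $M_c \simeq Q \simeq S^{d-2}$.

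The main obstacle, such as it is, lies in producing the global branch $\theta$ of the argument of $w(\cdot, c)$; this is straightforward here because the image curve lies in $\C \setminus \{0\}$ and is a shifted parabola, but it is the step where both hypotheses $\ee \neq 0$ and $|c|$ small get used. The quantitative bound $|c| < \ee^2$ is what guarantees $w$ stays uniformly away from $0$; strengthening or relaxing it affects only the allowable size of $c$, not the conclusion.
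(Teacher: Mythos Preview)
Your argument is correct and takes a genuinely different route from the paper's.  The paper first handles the case $c=0$ by a direct retraction: writing the real and imaginary parts of $(z_2,\ldots,z_d)$ in spherical coordinates $s\xi$ and $t\eta$, it reduces the defining equation (after rescaling $\ee$ to $-1$) to $1+t^2=s^2t^2|\xi\cdot\eta|^2+s^2$ and retracts $(s,t)$ to $(1,0)$, landing on the unit $(d-2)$-sphere; the case of small nonzero $c$ is then deduced by a separate transversality argument.  You instead fiber the intersection over the free real parameter $y_1$, observe that each fiber is a nonsingular affine complex quadric because $|w(y_1,c)|\ge\ee^2-|c|>0$, and trivialize the family using a global square root of $w$ (available since the base $\R$ is simply connected), reducing to the classical identification of $\{\sum z_j^2=1\}\subset\C^{d-1}$ with $TS^{d-2}$.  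Your approach treats all $|c|<\ee^2$ uniformly and yields an explicit quantitative threshold, at the cost of invoking the Milnor-fiber identification as a black box; the paper's approach is more self-contained for $c=0$ but needs the extra transversality step to pass to $c\neq 0$.
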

\begin{proof}
Rescaling, we can assume that $\ee=-1$. Parameterizing $(x_2,\ldots,x_d) = s \xi$
and $(y_2,\ldots,y_d) = t\eta$ where $s,t\geq 0$ and $\xi, \eta$ 
are unit vectors in $\Real^{d-1}$, we obtain the equations
\begin{equation}\label{eq:bottom}
x_1=1, \qquad 
1-y_1^2+t^2|\eta|^2-s^2|\xi|^2=c, \qquad 
y_1= st(\eta\cdot\xi).
\end{equation}
Suppose $c=0$. Then the manifold in question is given by
\[
1+t^2=s^2t^2|\xi\cdot\eta|^2+s^2.
\]
Since $s^2t^2|\xi\cdot\eta|^2+s^2 \leq s^2(1+t^2)$, one can keep $\xi,\eta$ fixed
and retract $(s,t)$ satisfying this equation to $(1,0)$. 
This retracts the manifold onto the unit 
$(d-2)$-sphere.

For nonzero $c$ it can be verified that the manifolds given 
by~\eqref{eq:bottom} are transverse, and therefore remain transverse 
for small $c$, meaning the intersections are homeomorphic.
\qed
\end{proof}

\begin{cor}\label{cor:bottomtop} Assume that $\rhat$ is supporting.
Then, for $\rho>0$ small enough, there are $\ee,c_*>0$ such that 
\[
\sing_c\cap\{h_{\rhat}=-\ee\}\cap B_\rho(\zz_*)
\]
is homotopy equivalent to $S^{(d-2)}$ for $|c|<c_*$.
\end{cor}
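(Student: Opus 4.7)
The plan is to reduce $\sing_c\cap\{h_{\rhat}=h_{\rhat}(\zz_*)-\ee\}\cap B_\rho(\zz_*)$, up to diffeomorphism, to the model studied in Lemma~\ref{lem:bottom} and then read off its homotopy type from that lemma. The reduction proceeds via the rescaling used in Corollary~\ref{cor:slab}, and the passage from nonzero $(c,\ee)$ back to the limiting quadric uses Ehresmann's fibration theorem powered by the transversality assertions of Corollary~\ref{cor:slab}.

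First I would install the adapted coordinates from the proof of Proposition~\ref{pr:loc}: after a real affine-linear change with $\zz_*$ at the origin,
\[ Q(\zz) = z_1^2-\sum_{k\geq 2} z_k^2 + O(|\zz|^3), \qquad h_{\rhat}(\zz)-h_{\rhat}(\zz_*) = z_1+\sum_{k\geq 2}a_k z_k + O(|\zz|^2), \]
with $\sum a_k^2<1$ (the supporting hypothesis). Since the linear part of $h_{\rhat}-h_{\rhat}(\zz_*)$ is then time-like with respect to $q_2$, a further real Lorentz rotation (preserving $q_2$) brings it to a positive multiple of $x_1$. Following the proof of Corollary~\ref{cor:slab}, rescale $\zz=\rho\zz'$ and set $Q^\rho(\zz')=\rho^{-2}Q(\rho\zz')$, $h^\rho(\zz')=\rho^{-1}(h_{\rhat}(\rho\zz')-h_{\rhat}(\zz_*))$, $c'=c/\rho^2$, $\ee'=\ee/\rho$. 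As $\rho\downarrow 0$ the functions $Q^\rho, h^\rho$ converge in $C^\infty$ on compact sets to the pure Lorentzian quadric and to a positive multiple of $x_1$ respectively. Choosing $\ee=\alpha\rho$ and $|c|<\beta\rho^2$ with $\alpha,\beta>0$ small and fixed keeps $\ee'$ and $c'$ in a fixed small window.

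Second I would apply Ehresmann's fibration theorem to the family
\[ X(c',\ee') := \{\zz'\in\overline{B}_1 : Q^\rho(\zz')=c',\ h^\rho(\zz')=-\ee'\}. \]
For $\rho, c', \ee'$ small enough and $\ee'>0$, Corollary~\ref{cor:slab} yields mutual transversality of the three hypersurfaces $\{Q^\rho=c'\},\ \{h^\rho=-\ee'\},\ \partial B_1$; this makes $X(c',\ee')$ a locally trivial smooth fiber bundle over the connected parameter set $\{(c',\ee') : |c'|<c_*',\ 0<\ee'<\ee_*'\}$. Hence all fibers are mutually diffeomorphic, and it suffices to compute the homotopy type of the limiting fiber $X(0,\ee')$, namely the pure quadric intersected with $\{x_1=-\ee'\}$ inside $\overline{B}_1$.

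Finally, to identify this model fiber with $S^{d-2}$ I would use the parameterization $(x_2,\ldots,x_d)=s\xi,\ (y_2,\ldots,y_d)=t\eta$ with $\xi,\eta\in S^{d-2}$ from the proof of Lemma~\ref{lem:bottom}; in the substituted variables $s=\ee'S,\ t=\ee'T$ the solution curve becomes exactly the one analyzed there, and Lemma~\ref{lem:bottom} retracts it onto $\{T=0,\ S=1\}$, which traces out a copy of $S^{d-2}$ sitting at distance $\ee'\sqrt{2}$ from the origin (well inside $\overline{B}_1$ for $\ee'$ small). A short monotonicity computation shows that the squared distance $\ee'^{2}\bigl(1+S^2+T^2+(ST\,\xi\cdot\eta)^2\bigr)$ along the solution curve is non-decreasing as $T$ increases, so the retraction moves points inward and never exits $\overline{B}_1$. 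The main obstacle is precisely this last point: the global model intersection in Lemma~\ref{lem:bottom} is non-compact and does leave $\overline{B}_1$, so one must certify that truncating to the ball does not disturb the homotopy type, which is exactly what the monotonicity check gives. Everything else is routine transversality plus the rescaling reduction.
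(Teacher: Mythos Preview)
Your approach is the paper's: change coordinates so the leading parts of $Q$ and $h_{\rhat}$ are the Lorentzian quadric and $x_1$, rescale by $\rho$, and invoke transversality to transport the homotopy type from Lemma~\ref{lem:bottom}. Your explicit monotonicity check---the squared distance along the solution curve simplifies to $2\ee'^2(1+T^2)$, so the retraction of Lemma~\ref{lem:bottom} never exits $\overline B_1$---fills in a detail the paper leaves to the single word ``transversality''; the only slip is that your Ehresmann parameter window should keep $\ee'$ bounded away from $0$ (otherwise the critical values of $h^\rho$ on $\{Q^\rho=c'\}$, which are $\approx\pm\Re\sqrt{c'}$, can meet $-\ee'$), but since the corollary requires only a single $\ee>0$ this is immaterial.
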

\begin{proof}
We can choose coordinates in which the quadratic part of $Q$ and 
$h_\rhat$ are given by $z_1^2-z_2^2-\ldots-z_d^2$ and $x_1$, respectively. Then, 
repeating the argument in Corollary \ref{cor:slab}, we can view 
a rescaled $Q$ and $h$ as small perturbations of the quadratic and 
linear functions in Lemma \ref{lem:bottom}, and apply transversality.
\qed\end{proof}

We will be referring to the intersection 
$$ \slab := \slab_{\rho , \ee} := B_\rho\cap \{|h-h_*| \leq \epsilon\} \, ,$$
for $\rho, \epsilon$ satisfying the conditions of Corollary \ref{cor:slab}, 
as the $(\rho,\epsilon)$-slab.  We call the intersection of the slab 
with the boundary $\partial B_\rho$ its {\bf vertical} boundary, and
the intersection with $h = h_* - \ee$ its {\bf bottom}.

\begin{figure}[!ht] 
\centering
\includegraphics[width=2.9in]{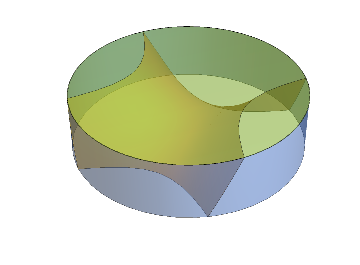}
\caption{A slab.}
\label{fig:slab}
\end{figure}

\begin{cor}\label{cor:vv}
For $\rho, \epsilon_*, c_*$ satisfying the conditions of 
Corollary~\ref{cor:slab}, whenever $|c| < c_*$ there exists 
a vector field $\vv$ on the the intersection $\sing_{c,\slab} 
:= \sing_c\cap \slab_{\rho , \ee_*}$ such that the following hold.
\begin{enumerate}
\item $dh\cdot \vv<0$ everywhere outside of the critical points of $h$ 
on $\sing_{c,\slab}$,
\label{item:slab1}
\item For points on $\sing_{c,\slab}$ within $\rho/3$ from $\zz_*$, 
the vector field is the gradient vector field for $-h$ on $\sing_c$ 
with respect to the standard Hermitian form on $\Comp^d$,
\label{item:slab2}
\item For points at distance between $\rho/2$ and $\rho$ from $\zz_*$, 
the vector field is tangent to the spheres 
$\{|\zz-\zz_*| = \mathtt{const}\}$ and $dh \cdot \vv=-1$,
\label{item:slab3}
\item If $c$ is real, the vector field is invariant under conjugation:
$\vv (\overline{\zz}) = \overline{\vv(\zz)}$.
\label{item:slab4}
\end{enumerate}
\end{cor}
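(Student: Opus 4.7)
The plan is to build $\vv$ by patching two locally defined vector fields via a smooth radial partition of unity. First I define an inner field $\vv_{\mathrm{in}}$ on $\sing_c \cap B_{\rho/3}(\zz_*)$ as the (intrinsic) gradient of $-h$ along the smooth manifold $\sing_c$ with respect to the metric induced from the standard Hermitian form on $\Comp^d$. By construction $dh \cdot \vv_{\mathrm{in}} = -\|\mathrm{grad}_{\sing_c} h\|^2$, which is strictly negative away from the critical points of $h|_{\sing_c}$. By Proposition \ref{pr:loc}, shrinking $c_*$ if necessary, all critical points of $h|_{\sing_c}$ inside $B_\rho(\zz_*)$ lie well inside $B_{\rho/3}(\zz_*)$ for every $|c|<c_*$, so $\vv_{\mathrm{in}}$ has the required sign in the overlap annulus and condition \ref{item:slab2} holds automatically near $\zz_*$.

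Next I construct an outer field $\vv_{\mathrm{out}}$ on the spherical shell $\sing_c \cap \{\rho/3 \leq |\zz - \zz_*| \leq \rho\} \cap \slab$. By Corollary \ref{cor:slab} the three varieties $\{|\zz-\zz_*|=r\}$, $\{h = h_*+\epsilon\}$ and $\sing_c$ meet mutually transversely throughout this shell, so for each $r\in[\rho/2,\rho]$ the restriction $h|_{\sing_c\cap\{|\zz-\zz_*|=r\}}$ has nonvanishing differential. Taking the intrinsic gradient of $-h$ on the submanifold $\sing_c\cap\{|\zz-\zz_*|=r\}$ and rescaling it pointwise by $1/\|\mathrm{grad}\, h\|^2$ gives a smooth vector field, tangent to the concentric spheres, with $dh\cdot \vv_{\mathrm{out}} = -1$. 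This is precisely what condition \ref{item:slab3} demands.

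I then glue the two fields with a smooth cutoff $\phi = \phi(|\zz-\zz_*|)$ equal to $1$ for $|\zz-\zz_*|\leq \rho/3$ and to $0$ for $|\zz-\zz_*|\geq \rho/2$, setting
\[
\vv := \phi\,\vv_{\mathrm{in}} + (1-\phi)\,\vv_{\mathrm{out}}.
\]
Since $dh\cdot\vv_{\mathrm{in}}<0$ throughout the overlap (no critical points there) and $dh\cdot \vv_{\mathrm{out}} = -1 < 0$, the convex combination still satisfies $dh\cdot \vv<0$ away from the critical points, yielding \ref{item:slab1}. Conditions \ref{item:slab2} and \ref{item:slab3} follow because $\phi$ equals $1$ or $0$ on the respective regions. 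For \ref{item:slab4}, when $c\in\Real$ the variety $\sing_c$ is invariant under complex conjugation, and both the Hermitian metric and the function $h$ are real, so each intrinsic gradient construction is conjugation-equivariant; since $\phi$ depends only on $|\zz-\zz_*|$, the patched field $\vv$ inherits this equivariance.

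The only place where something can go wrong is in the overlap annulus, where stray critical points of $h|_{\sing_c}$ would kill the sign of $dh\cdot \vv_{\mathrm{in}}$; this is the main obstacle and it is neutralized by invoking Proposition \ref{pr:loc} to confine the two critical points to an arbitrarily small neighborhood of $\zz_*$ by further reducing $c_*$. Everything else is a routine bump-function argument.
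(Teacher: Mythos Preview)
Your proof is correct and follows the same two--patch strategy as the paper: an inner piece equal to the negative gradient of $h$ on $\sing_c$, an outer piece tangent to the spheres $\{|\zz-\zz_*|=r\}$ with $dh\cdot\vv<0$, and a radial bump to glue them. The only substantive difference is in how the outer field is manufactured: the paper picks, near each point of the shell, local coordinates $(h,\,|\zz-\zz_*|,\,\Re Q,\,\Im Q)$ and sets $\vv^{(\zz)}=\partial/\partial h$, then patches these with a further partition of unity; you instead take the intrinsic gradient of $-h$ on each slice $\sing_c\cap\{|\zz-\zz_*|=r\}$ and rescale, which is a cleaner global description and makes the conjugation--equivariance immediate.

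One small wrinkle to tidy up: Corollary~\ref{cor:slab} as stated only guarantees the triple transversality for $\rho/2<r<\rho$, so your $\vv_{\mathrm{out}}$ is not yet defined on the gluing annulus $\rho/3\le r\le\rho/2$ where $1-\phi>0$. This is harmless---the constant $1/2$ in that corollary is explicitly arbitrary (``between, say, $1$ and $1/2$'' in its proof), so rerunning it with inner radius $\rho/4$, or simply shifting your cutoff to transition inside $(\rho/2,\rho)$ while invoking Proposition~\ref{pr:loc} to keep the critical points below the new inner radius, closes the gap. The paper's version has the same looseness.
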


\begin{proof}
Let $\vv^{(\zz_*)}$ denote the gradient vector field for $-h$
on $\sing_c$ as in statement~\ref{item:slab2}.  For any point $\zz$
at distance between $\rho$ and $\rho/2$ from $\zz_*$, 
the transversality conclusions of Corollary~\ref{cor:slab} 
imply that near $\zz$ one can choose coordinates that include the 
four functions $h, |\zz-\zz_*|, \Re \{ Q \}$ and $\Im \{ Q \}$.  
In such coordinates, define $\vv^{(\zz)} := \partial / \partial h$.  
Because $h$, $Q$, distance and the standard Hermitian form are
invariant with respect to complex conjugation, we may choose
the family $\{ \vv^{(\zz)} \}$ to be invariant, in the sense
that $\vv^{(\overline{\zz})} (\overline{\ww})$ is the conjugate
of $\vv^{(\zz)}(\ww)$.  

Use a partition of unity to glue together the vector fields 
$\vv^{(\zz)}$, ensuring that the partition gives weight~1 
to points in a $\rho / 3$ neighborhood of $\zz_*$ and zero weight
outside the $\rho / 2$ neighborhood.  This ensures 
conclusions~\ref{item:slab1},~\ref{item:slab2} and~\ref{item:slab3}.
If the partition is chosen invariant with respect to conjugation, 
the last conclusion will be true as well.
\qed\end{proof}

\begin{pr} \label{pr:local}
Assume again that $\rr$ is supporting at $\zz_*$, where $\zz_*$ 
is a quadratic singularity of $Q$ with signature $(1,d-1)$.  
Fix $\rho$ and $\epsilon$ satisfying conditions of Corollary~\ref{cor:slab} 
and the corresponding $(\rho,\ee)$-slab.  Letting $\bottom$ denote
$\sing_c \cap \slab \cap \{ h - h_* = -\ee \}$,
the relative homology group
$$ H_-:=H_{d-1}(V_c\cap \slab, \bottom ) $$
is free of rank 2 for small enough $|c| \neq 0$.  For small real 
$c > 0$, its generators are given by 
\begin{itemize}
\item
an absolute cycle, the image of the generator of 
$H_{d-1}(V_c \cap B_r)$ under the natural homomorphism into $H_-$, and 
\item
the relative cycle corresponding to the lobe of the real part of $V_c$ 
located in $\{h \leq h_*\}$.
\end{itemize}
\end{pr}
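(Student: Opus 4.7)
The plan is to apply relative Morse theory to the cobordism $\sing_c \cap \slab$ between its lower boundary $\bottom$ and its top level, using the gradient-like vector field $\vv$ from Corollary~\ref{cor:vv}; that field decreases $h$, is tangent to the vertical boundary, transverse to top and bottom, and conjugation-invariant for real $c$. By Proposition~\ref{pr:loc} (after shrinking $\rho$ if necessary so that $\slab \subset B_\delta(\zz_*)$), for small $|c| \neq 0$ the function $h|_{\sing_c \cap \slab}$ has exactly two critical points $\zz_c^\pm$, both non-degenerate, with $h(\zz_c^+) > h_* > h(\zz_c^-)$ for real $c > 0$. Because $h$ is the real part of the locally holomorphic function $\log \zz^{\rhat}$ on the complex manifold $\sing_c$ of complex dimension $d-1$, the same holomorphicity argument as in Lemma~\ref{lem:grad_quadric} gives both critical points Morse index exactly $d-1$.

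Relative handle theory then presents $(\sing_c \cap \slab, \bottom)$ as a CW-pair whose only relative cells are the two $(d-1)$-cells provided by the cores of the $(d-1)$-handles attached at $\zz_c^\pm$. The relative cellular chain complex is therefore $\Int^2$ concentrated in degree $d-1$, so $H_- \cong \Int^2$ is free of rank $2$.

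It remains to match the two abstract cells to the generators named in the proposition for real $c > 0$. By the conjugation invariance in Corollary~\ref{cor:vv}(iv), the descending disk of $\zz_c^-$ is preserved by complex conjugation, hence lies in $\sing_c \cap \Real^d$; in the quadric limit it degenerates to the lower lobe $\hyper_-$ of Lemma~\ref{lem:grad_quadric}, so for small $c>0$ it is precisely the real lobe of $\sing_c$ in $\{h \leq h_*\}$, truncated by $\bottom$, that the proposition prescribes. For the absolute generator, Proposition~\ref{pr:total space} gives $H_{d-1}(\sing_c \cap B_\rho) \cong \Int$ and $\sing_c \cap B_\rho \simeq T^*S^{d-1}$; on this space the two index-$(d-1)$ critical points $\zz_c^\pm$ form a perfect Morse pair, so the handle at $\zz_c^+$ assembles with part of the lower handle into an absolute $(d-1)$-cycle in $\sing_c \cap B_\rho$ generating $H_{d-1}(\sing_c \cap B_\rho)$, and its image under the natural map $H_{d-1}(\sing_c \cap B_\rho) \to H_-$ is independent of the relative generator above and supplies the second basis element.

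The main obstacle is this last step, namely verifying that the cycle built from the upper handle genuinely generates $H_{d-1}(\sing_c \cap B_\rho)$, rather than some integer multiple, and with the correct sign. The cleanest route I see is to rescale $\sing_c$ around $\zz_*$ by $c^{-1/2}$ so that it converges to the smooth quadric $\singq$ of Section~\ref{sec:quadric}, where the $T^*S^{d-1}$ structure and the explicit vanishing sphere $\sph$ have already been exhibited; smooth deformation invariance of the handle decomposition then transports the resulting integer generator back to $\sing_c$ for small $c$, completing the identification.
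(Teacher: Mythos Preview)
Your approach is essentially the paper's: use the gradient-like field $\vv$ from Corollary~\ref{cor:vv}, invoke Morse theory to get two index-$(d-1)$ cells and hence $H_-\cong\Int^2$, and identify the real lobe with the descending disk of $\zz_c^-$ via conjugation invariance. That part matches the paper's proof closely.

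Where you diverge is in handling the absolute generator. You propose to build the vanishing sphere from the two handles and then justify that it generates $H_{d-1}(\sing_c\cap B_\rho)$ by rescaling to the quadric $\singq$ and invoking deformation invariance. This works, but the paper takes a shorter route that you are overlooking: it feeds the pair $(\sing_c\cap\slab,\bottom)$ into the long exact sequence
\[
H_{d-1}(\bottom)\;\to\;H_{d-1}(\sing_c\cap\slab)\;\to\;H_-\;\to\;H_{d-2}(\bottom)
\]
and uses Corollary~\ref{cor:bottomtop}, which says $\bottom\simeq S^{d-2}$, so $H_{d-1}(\bottom)=0$. Hence the natural map $H_{d-1}(\sing_c\cap\slab)\to H_-$ is injective, and the absolute generator from Proposition~\ref{pr:total space} survives as a primitive element of $H_-$. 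No rescaling or explicit handle assembly is needed. Your rescaling argument is not wrong, just heavier than necessary; the long exact sequence together with the already-proved homotopy type of $\bottom$ disposes of exactly the ``main obstacle'' you flag.
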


\begin{proof}
The trajectories of the flow along the vector field $\vv (\cdot)$ 
constructed in Corollary~\ref{cor:vv} starting on $\sing_{c,\slab}$ 
either converge to the critical points of $h$ on $\sing_{c,\slab}$, 
or reach $\bottom$.  Indeed, the value of $h$ is strictly decreasing 
outside of the critical points, and cannot leave the slab through 
its side due to conclusion~\ref{item:slab3} of Corollary~\ref{cor:vv}.
All trajectories therefore remain in the slab or reach the bottom.  

The homology of the pair $(\sing_c \cap \slab , \bottom)$ is generated by
classes represented by the unstable manifolds of the Morse function $h$
at critical points on $\sing_c \cap \slab$; this is the fundamental
theorem of stratified Morse theory, for example~\cite[Theorem~B]{GM}.
In our situation, there are exactly two such critical points, 
$\zz_-$ and $\zz_+$, both in the real part of $\sing_c$ and both of index 
$d-1$. This proves the statement about the rank of the group. 

The long exact sequence of the inclusion of the bottom into 
$\sing_c \cap \slab$ gives an exact sequence containing the maps
$$H_{d-1} (\bottom) \to H_{d-1} (\sing_c \cap \slab) \to 
   H_{d-1} (\sing_c \cap \slab , \bottom) \, .$$
Using Corollary~\ref{cor:bottomtop}, the first of these groups vanishes 
because $\sing_c \cap \bottom$ is homotopy equivalent to $S_{d-2}$. 
It follows that the absolute cycle generating $H_{d-1}(V_c \cap B_\rho)$ 
is nonvanishing in $H_{d-1} (\sing_c \cap \slab , \bottom)$ and is
therefore a generator of $H_-$.

For $c>0$, the real part of $\sing_c$ located within the lower half 
of the slab, $\{h<h_*\}$, contains the critical point $\zz_-$ (by
Proposition~\ref{pr:loc}), and 
the vector field $\vv$ is tangent to it (thanks to the reality property 
mentioned above). Hence it coincides with the unstable manifold of $\zz_-$.
\qed\end{proof}

Of course, the same argument applies to the Morse function 
$-h$ on $V_c$, implying that the group
$$H_+ := H_{d-1}(V_c\cap \slab, (V_c\cap\slab) 
   \cap \{h = h_* + \epsilon \}) $$
also has rank~2 and, for positive real $c$, is generated by
the same absolute cycle together with the analogous relative
cycle (the lobe of the real part of $\sing_c$ located in
$\{ h \geq h_* \}$).  
For small positive $c$, the situation we will restrict ourselves to 
from now on, we will denote the generators in $H_-$ as $\sph_-$ and  
$\hyper_-$,  where $\sph_-$ is the absolute class represented by the small sphere 
in $V_c$ and $\hyper_-$ is the relative class represented by the 
corresponding component of the real part of $Q_c$. In the same way 
we define classes $\sph_+$ and $\hyper_+$ generating $H_+$.

A general duality result implies that the relative groups 
$H_-$ and $H_+$ are dual to each other, with the coupling given 
by the intersection index.  Briefly, the reason is that the vector
field in Corollary~\ref{cor:slab} may be used to deform $\slab$
until the boundary of the top flows down to the boundary of the bottom;
this makes the space into a manifold with boundary satisfying the
hypotheses of~\cite[Theorem~3.43]{hatcher}. The conclusion of that
theorem is an isomorphism between a homology group and a cohomology 
group, which, combined with Poincar{\'e} duality, proves the claim.  
In fact, we won't use this argument because we need to compute this 
coupling explicitly, as follows.

\begin{pr} \label{pr:pairing}
The intersection pairing between 
$H_-$ and $H_+$ is given by 
\begin{eqnarray*}
\langle \hyper_+, \hyper_- \rangle & = & 0 \, ; \\
\langle \hyper_+, \sph_- \rangle & = & (-1)^{d(d-1)/2} \, ; \\
\langle \sph_+, \hyper_- \rangle& = & (-1)^{d(d-1)/2} \, ; \\
\langle \sph_+,\sph_- \rangle & = & (-1)^{d(d-1)/2}\chi(S^{d-1})=(-1)^{d(d-1)/2}(1+(-1)^{d-1}) \, .
\end{eqnarray*}
\end{pr}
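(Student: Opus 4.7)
The plan is to reduce to the standard flat quadric model and compute each of the four intersection numbers explicitly.

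First I would perform a real-linear change of coordinates near $\zz_*$ bringing the quadratic part of $Q$ into its signature $(1,d-1)$ normal form and $h$ into its leading linear form. This is legitimate because the intersection pairing is a topological invariant and, for small $c$, the higher-order terms of $Q$ and $h$ do not affect the local topology of $\sing_c$ near $\zz_*$ that carries the generators $\sph_\pm, \hyper_\pm$ described in Proposition~\ref{pr:local}. In the resulting model,
\begin{align*}
Q_c(\zz) = z_1^2 - z_2^2 - \cdots - z_d^2 - c, \qquad h(\zz) = -x_1,
\end{align*}
with small real $c > 0$, the underlying manifold of $\sph_\pm$ and the relative cycles $\hyper_\pm$ admit the explicit descriptions
\begin{align*}
\sph &= \{y_1 = 0,\ x_2 = \cdots = x_d = 0,\ x_1^2 + y_2^2 + \cdots + y_d^2 = c\}, \\
\hyper_\pm &= \{y_1 = \cdots = y_d = 0,\ x_1 = \pm\sqrt{c + x_2^2 + \cdots + x_d^2}\}.
\end{align*}

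I would then evaluate each pairing. The identity $\langle \hyper_+, \hyper_- \rangle = 0$ is immediate, as the two real sheets live in the disjoint half-spaces $\{x_1 > 0\}$ and $\{x_1 < 0\}$. For $\langle \hyper_+, \sph_- \rangle$ and $\langle \sph_+, \hyper_- \rangle$, the real lobe and the sphere meet transversely at the single respective pole $\zz_\pm = (\pm\sqrt{c}, 0, \ldots, 0)$: at $\zz_+$, $T\hyper_+$ is the real span of $\{\partial_{x_j}\}_{j \geq 2}$ while $T\sph$ is the real span of $\{\partial_{y_j}\}_{j \geq 2}$, together filling $TV_c\big|_{\zz_+} = \{dz_1 = 0\}$. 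Comparing the oriented wedge of these bases with the complex orientation of $TV_c$ is a standard shuffle; combined with the orientation conventions on $\hyper_\pm$ (as descending manifolds of the Morse flow from Corollary~\ref{cor:vv}) and on $\sph$ (as the boundary of its enclosing ball, matching its role as a Milnor vanishing cycle), this produces the common sign $(-1)^{d(d-1)/2}$.

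For the self-pairing $\langle \sph_+, \sph_- \rangle$ I would exploit that $\sph$ is a totally real submanifold of $V_c$: it is the fixed locus on $V_c$ of the antiholomorphic involution $(z_1, z_2, \ldots, z_d) \mapsto (\bar z_1, -\bar z_2, \ldots, -\bar z_d)$. Consequently the normal bundle $\nu_\sph$ in $V_c$ is real-isomorphic to $T\sph$ via multiplication by the complex structure $J$, and the standard self-intersection formula gives $\langle \sph_+, \sph_- \rangle = \pm\, e(\nu_\sph) = \pm\, \chi(S^{d-1}) = \pm(1 + (-1)^{d-1})$, with the sign $(-1)^{d(d-1)/2}$ produced by the same real-to-complex orientation comparison as in the mixed pairings. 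Equivalently, one may perturb $\sph_-$ off $\sph_+$ along a section of $\nu_\sph$ coming from a vector field on $S^{d-1}$ and sum signed zeros, recovering the Euler characteristic directly.

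The hard part will be the sign bookkeeping: fixing the orientations of $\hyper_\pm$ as descending manifolds of the Morse flow and of $\sph$ as a Milnor vanishing cycle, then tracing them through the real-to-complex shuffle that produces the $(-1)^{d(d-1)/2}$ factor. Once those conventions are pinned down, the intersection geometry in the flat local model is transparent.
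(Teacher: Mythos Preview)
Your proposal is correct and follows essentially the same route as the paper: reduce to the flat quadric via rescaling, observe disjointness for $\langle \hyper_+, \hyper_-\rangle$, compute the single transverse intersection of $\hyper_\pm$ with $\sph$ via the explicit tangent frames $\{\partial_{x_j}\}_{j\geq 2}$ and $\{\partial_{y_j}\}_{j\geq 2}$ and the shuffle permutation against the complex frame, and identify $\langle \sph_+,\sph_-\rangle$ with the Euler characteristic of the normal bundle of the totally real $\sph$ in $V_c$. Your added observation that $\sph$ is the fixed locus of the antiholomorphic involution $(z_1,z_2,\ldots,z_d)\mapsto(\bar z_1,-\bar z_2,\ldots,-\bar z_d)$, hence $\nu_\sph \cong T\sph$ via $J$, is a nice way to make explicit what the paper leaves implicit in invoking the conormal bundle.
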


\begin{unremark}
We pedantically distinguish between $\sph_+$ and $\sph_-$,  
although they are the image of the same absolute class, 
or even chain, in $V_c$. Also, we note that our 
orientations of the spheres and their tangent spaces can be in 
disagreement with the standard orientations induced by the complex 
structure. By changing the orientation of the chain $\sph$, 
one can suppress the annoying sign factor in the second and third 
equalities, but not in the last one.
\end{unremark}

\noindent{\sc Proof:} We can work (after rescaling) in the setup of 
Lemma \ref{lem:grad_quadric}. The cycles representing $\hyper_c^\pm$ are
disjoint, explaining the first line. Each intersect $\sph$ in precisely 
one point.  Denoting $\partial/\partial x_k$ as $\xi_k$ 
and $\partial/\partial y_k$ as $\eta_k$, the tangent spaces 
to $\sph=\sph_\pm$ at $\zz_\pm$ are 
spanned by the vectors
\[
\pm\eta_2,\ldots,\pm\eta_d,
\]
and the tangent spaces to $\hyper_\pm$ at $\zz_\pm$ are spanned by
\[
\pm\xi_2,\ldots,\pm\xi_d,
\]
respectively.

In the standard orientation of the complex hypersurface $\singq$, 
the frame $(\xi_2,\eta_2,\ldots,\xi_d,\eta_d)$ is positive. 
Hence, the intersection index of $\hyper_+$ and $\sph$ is the 
parity of the permutation shuffling 
$$ (\xi_2,\ldots,\xi_d,\eta_2,\ldots,\eta_d) $$
into that standard order, giving the second line. The third line is 
obtained similarly, taking the signs into account.

The last pairing can be observed by noting that the self-intersection 
index of a class represented by a manifold of middle dimension 
in a complex manifold is equal to the Euler characteristics of the 
conormal bundle of the manifold, under the identification of the 
collar neighborhood of the manifold with its conormal bundle. 
This gives $\chi(\sph) = (1+(-1)^{d-1}) (-1)^{d(d-1)/2}$, 
where again the mismatch between the standard orientation 
of the conormal bundle and the ambient complex variety contributes 
the factor $(-1)^{d(d-1)/2}$.
$\Cox$

Importance of the local homology computation lies in the following 
localization result.  Let $\uu_* := \Ll(\zz_*) \in \Real^d$ be a 
point on the boundary of $\amoeba(Q)$ (recall $\Ll$ is the
logarithmic map $\zz \mapsto \log|\zz|$). 
\begin{thm} \label{th:two cp}
Assume that the quadratic critical point $\zz_*$ is the only 
element of $\TT (\uu_*) \cap \sing$, that $\zz_*$ lies on the boundary
of a component of $\amoeba(Q)^c$ 
and that $\rr$ is supporting.  Then for any $\rho > 0$ there exist
$\ee , c_* > 0$ such that for all $c \in (0 , c_*)$ the intersection 
class $\I(\TT) \subseteq \sing_c$ can be represented by a chain 
supported on
$$ B_\rho(\zz_*)\cup \{h\leq h_*-\epsilon\} \, .$$
\end{thm}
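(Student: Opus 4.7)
The plan is to realize $\I(\TT)$ by intersecting the standard cobordism with $\sing_c$ and to use the uniqueness hypothesis $\TT(\uu_*) \cap \sing = \{\zz_*\}$ to show that this intersection automatically lies in the prescribed set. I would choose $\xx \in B$ very close to $\xx_*$ and $\xx'$ deep inside a descending component $B'$ (whose existence is guaranteed by the discussion preceding Definition~\ref{def:descending comp}), so that $h_\rr \approx h_*$ on $\TT(\xx)$ while $h_\rr \ll h_* - \ee$ on $\TT(\xx')$. Set $\bH := \Ll^{-1}([\xx,\xx'])$. Since $h_\rr$ is linear along $[\xx,\xx']$, the subsegment where $h_\rr > h_* - \ee$ shrinks to $\{\xx_*\}$ as $\xx \to \xx_*$ and $\xx'$ recedes further into $B'$.

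Next I would establish a localization lemma: because $\TT(\uu_*) \setminus B_\rho(\zz_*)$ is compact and disjoint from $\sing$, a standard compactness argument produces a neighborhood $V \ni \uu_*$ in $\R^d$ and a constant $\mu > 0$ such that $|Q(\zz)| \geq \mu$ for every $\zz \in \Ll^{-1}(V) \setminus B_\rho(\zz_*)$. Consequently, whenever $|c| < \mu$ the perturbed variety $\sing_c$ meets $\Ll^{-1}(V)$ only inside $B_\rho(\zz_*)$. Now refine the choices above so that the high-height portion $\{\zz \in \bH : h_\rr(\zz) > h_* - \ee\}$ projects under $\Ll$ into $V$; this is possible by the shrinking of that subsegment to $\{\xx_*\}$.

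Combining these ingredients, we get the set-theoretic inclusion $\bH \cap \sing_c \subseteq B_\rho(\zz_*) \cup \{h_\rr \leq h_* - \ee\}$. Since $\sing_c$ is smooth for $0 < c < c_*$ by Proposition~\ref{pr:stable}, a generic arbitrarily small perturbation $\bH'$ of $\bH$ meets $\sing_c$ transversely. By working with a slightly tighter $\ee' > \ee$ in the above argument we can place $\bH \cap \sing_c$ in the interior of the desired set, so that a sufficiently small perturbation keeps $\bH' \cap \sing_c$ inside $B_\rho(\zz_*) \cup \{h_\rr \leq h_* - \ee\}$. By Definition~\ref{def:IC}, $\bH' \cap \sing_c$ is a manifold representative of the intersection class $\I(\TT, \TT(\xx'))$, supported in the required set.

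The main obstacle is the localization lemma: converting the set-theoretic uniqueness $\TT(\uu_*) \cap \sing = \{\zz_*\}$ into the uniform quantitative bound $|Q| \geq \mu$ on a thickened punctured torus, which is what controls where $\sing_c$ can appear as $c$ varies. Everything else --- the existence and geometry of the descending component $B'$, the linearity of $h_\rr$ along straight segments in $\R^d$, and the final transversality perturbation allowed by smoothness of $\sing_c$ for $c \neq 0$ --- is by comparison routine.
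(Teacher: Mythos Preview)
Your proposal is correct and follows essentially the same approach as the paper's proof: both establish a localization statement via compactness (the paper phrases it as $\sing \cap \Ll^{-1}(B(\uu_*,\delta)) \subseteq B_\rho(\zz_*)$, which is equivalent to your lower bound $|Q|\ge\mu$ on the thickened punctured torus), both take the cobordism to be the $\Ll$-preimage of a path in $\R^d$ whose high-height portion lies in that neighborhood, and both finish by perturbing to transversality. The only cosmetic difference is that you use the straight segment $[\xx,\xx']$ and the linearity of $h_\rr$ along it, whereas the paper allows an arbitrary smooth $h$-decreasing path; this is immaterial.
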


\noindent{\sc Proof:}  Choose $\rho$ small enough so that
the conclusions of Corollary~\ref{cor:slab} hold.  As the intersection 
of $\sing$ with the torus $\TT (\uu_*)$ containing $\zz_*$ is a single 
point, standard compactness arguments imply that for sufficiently 
small positive $\delta$ the intersection of $\sing$ with the 
$\Ll$-pre-image of $B(\uu_* , \delta)$ is contained in $B_\rho (\zz_*)$.  
Pick a torus $\TT (\xx)$ where $\xx$ is a point in the intersection 
of $B$ with the component of the complement to the amoeba defining 
our power series expansion.  
Choose $\ee > 0$ such that $\{h \leq h_* - \ee \}$ intersects 
$B_\rho (\zz_*)$.  Let $\yy$ be a point in the component $B'$
defined at the end of Section~\ref{sec:preliminaries}, such
that $h_{\rhat} (\yy) < h_{\rhat} (\zz_*) - \ee$.  Choose any 
smooth path $\{ \alpha (t) : 0 \leq t \leq 1 \}$ from $\xx$ to $\yy$
that passes through $B_\rho (\zz_*)$ and
along which $h_{\rhat}$ decreases.  Then the $\Ll$-preimage 
of that path is a cobordism between $\TT$ and a torus $\TT'$ in 
$\{h \leq h(\zz_*) - \ee \}$.  The transversality conclusion
of Corollary~\ref{cor:slab} means that this cobordism, or a 
small perturbation of it, produces a chain realizing the intersection 
class $\I(\TT)$ and satisfying the desired conclusions.
$\Cox$

We now come to the main result of this section, which
completes Step~\ref{step:1} of the outline at the end of 
Section~\ref{sec:preliminaries}.

\begin{thm} \label{th:rel zero}
For even $d$ the intersection class $\I (\TT)$ is equal
to $[S_c]$ in $H_{d-1} (\sing_c , \sing_c (\leq - \ee))$, up to sign.
\end{thm}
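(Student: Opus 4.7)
The plan is to localize the intersection class near $\zz_*$, express it in the rank-two basis provided by Proposition~\ref{pr:local}, and then determine the coefficients by reducing to the limiting-quadric computations of Section~\ref{sec:quadric}.

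First, I would apply Theorem~\ref{th:two cp} to choose $\rho$ and $\ee$ small enough that the intersection class $\I(\TT)$ is represented by a chain supported in $B_\rho(\zz_*) \cup \{h_{\rhat} \leq h_* - \ee\}$. The part lying in $\{h_{\rhat} \leq h_* - \ee\}$ is zero in $H_{d-1}(\sing_c, \sing_c(\leq -\ee))$, so (via excision) $[\I(\TT)]$ is the image of some class $\alpha \in H_-$. By Proposition~\ref{pr:local} we may write $\alpha = a\, \sph_- + b\, \hyper_-$ for integers $a,b$, and since the natural map $H_- \to H_{d-1}(\sing_c, \sing_c(\leq -\ee))$ carries $\sph_-$ to $[S_c]$, the theorem reduces to showing $b = 0$ and $a = \pm 1$.

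Next, I would extract $a$ and $b$ using the intersection pairing of Proposition~\ref{pr:pairing}. When $d$ is even, $\chi(S^{d-1}) = 1 + (-1)^{d-1} = 0$, so the pairing matrix between $(\sph_-, \hyper_-)$ and $(\hyper_+, \sph_+)$ is antidiagonal:
\[
\langle \hyper_+, \sph_- \rangle = \langle \sph_+, \hyper_- \rangle = (-1)^{d(d-1)/2}, \qquad
\langle \hyper_+, \hyper_- \rangle = \langle \sph_+, \sph_- \rangle = 0.
\]
Consequently $a = (-1)^{d(d-1)/2}\langle \hyper_+, \alpha \rangle$ and $b = (-1)^{d(d-1)/2}\langle \sph_+, \alpha \rangle$, so the problem reduces to computing these two intersection numbers.

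Finally, I would compute the pairings by passing to the limiting quadric. Under the rescaling $\zz = \zz_* + \sqrt{c}\,\uu$ described at the start of Section~\ref{sec:quadric}, the smooth variety $\sing_c \cap B_\rho$ converges to the smooth quadric $\singq$; the chain representing $\I(\TT)$ degenerates to the cycle $\gamma = \hyper \cup \sph$ of Theorem~\ref{th:hyp-sphere}; and the basis elements $\sph_\pm, \hyper_\pm$ converge respectively to the sphere $\sph$ and the real lobes $\hyper^\pm$ from Proposition~\ref{pr:up down}. Because $a$ and $b$ are discrete invariants, they are constant in $c$ for small $|c|>0$ and may be read off in the limit. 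Theorem~\ref{th:null} states exactly that for $d$ even, $\gamma$ is homologous to $\north$ in $H_{d-1}(\singq)$, and $\north$ is the sphere $\sph$ up to a global sign. Hence in the limit $\alpha$ is $\pm \sph_-$, giving $b = 0$ and $a = \pm 1$, so $[\I(\TT)] = \pm [S_c]$ as claimed.

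The main obstacle will be the transfer from the absolute-homology identity of Theorem~\ref{th:null} in the smooth limiting quadric $\singq$ to the relative-homology identity in $H_-$ for the perturbed variety $\sing_c$. This requires verifying that the rescaling respects the basis of Proposition~\ref{pr:local} and that the intersection pairings of Proposition~\ref{pr:pairing} are continuous under the perturbation, so that the discrete invariants $a$ and $b$ are genuinely determined by their values in the limit.
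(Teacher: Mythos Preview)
Your first two paragraphs match the paper's argument exactly: localize via Theorem~\ref{th:two cp}, write the class in the basis $\{\sph_-,\hyper_-\}$ of Proposition~\ref{pr:local}, and use the pairing of Proposition~\ref{pr:pairing} (which becomes antidiagonal when $d$ is even) to reduce everything to computing $\langle\hyper_+,\alpha\rangle$ and $\langle\sph_+,\alpha\rangle$. So far so good.

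The divergence, and the gap, is in your third paragraph. You try to read off both pairings by passing to the rescaled limit and invoking Theorem~\ref{th:null}. But Theorem~\ref{th:null} is a statement in the \emph{absolute} group $H_{d-1}(\singq)$, whereas $a$ and $b$ live in the \emph{relative} group $H_-$. The relative generator $\hyper_-$ is topologically a disk, so it maps to zero in absolute homology; hence knowing that $\gamma\simeq\north$ in $H_{d-1}(\singq)$ pins down $a=\pm1$ but says nothing about $b$. (Concretely, the homotopy $T_1$ in the proof of Theorem~\ref{th:null} pushes the imaginary hyperboloid $\hyper$ onto the \emph{upper} real lobe, i.e.\ into $\{h\geq h_*\}$, so it does not respect the pair $(\singq\cap\slab,\bottom)$ and cannot be used to conclude anything about the $\hyper_-$ coefficient.) You flag exactly this obstacle at the end, but you do not close it.

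The paper computes the two pairings directly in $\sing_c$, without passing to the limit and without using Section~\ref{sec:quadric} at all. For $\langle\hyper_+,\alpha\rangle=\pm1$: the cobordism realizing $\I(\TT)$ crosses the real part of $\sing_c$ at a single point, and one checks this intersection with the upper real lobe $\hyper_+$ is transverse. For $\langle\sph_+,\alpha\rangle=0$: the path in $\R^d$ defining the cobordism can be perturbed to miss $\xx_*$ by a fixed positive distance; since the vanishing sphere $S_c$ has radius of order $\sqrt{c}$, for $c$ small enough the intersection class avoids $S_c$ entirely. This second argument is precisely what does \emph{not} survive the rescaled limit (there the sphere has radius~$1$ and the cobordism passes right through it), which is why your limiting approach stalls at this step.
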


\noindent{\sc Proof:}
Let $e$ denote the class of $\I (\TT)$ in the relative homology group
$H_-$.  Then, by Lemma~\ref{pr:local}, we have $e = a \hyper_- + b S_-$
for some integers $a$ and $b$.  We claim that
\begin{equation} \label{eq:e}
\langle \hyper_+, e\rangle = \pm 1 , \qquad \qquad
   \langle S_+, e \rangle = 0.
\end{equation}

The construction of the chain representing the intersection class
$\I(\TT)$ in Theorem~\ref{th:two cp} implies that it meets the
chain representing $\hyper_+$ at precisely one point $\zz_c'$.
The point $\zz_c'$ is not necessarily the point $\zz_c^+$, but it
is characterized by being the unique point where the homotopy 
intersects the real variety $\sing_{c , \R} \subseteq \sing_c$.

The intersection class is represented by a chain that is smooth 
near $\zz_c'$.  We need to check that its intersection with the 
``upper lobe'' $\hyper_+$ is transverse within $\sing_c$.  
Indeed, one can linearly change coordinates centered at $\zz_c'$ 
so that in the new coordinates $\zz'$ the homotopy segment 
runs along the $x_1'$ axis, and thus the equations defining the 
cobordism are $x'_2 = \ldots = x'_d = 0$.  Then $\sing_c$ is given by
$z_1' = R(z_2' , \ldots , z_d')$ with $dR \vert_{\zz_*^+}=0$. 
By direct computation, the intersection is transversal
and the tangent space to the chain representing the intersection 
class at $\zz_*^+$ is the tangent space to $\sing_c$ at $\zz_*^+$ 
multiplied by $i$.

Because $\hyper_+$ and $e$ intersect transversely at a single point,
the first identity in~\eqref{eq:e} is proved.
For the second identity, we again rely on perturbations of the
cobordism defining the intersection class.  If the path defining the
cobordism avoids $\zz_*$, for $c$ small enough, the chain realizing
$\I(\TT)$ constructed in Theorem~\ref{th:two cp} will completely
avoid the chain representing $\sph$, implying that the intersection
number of $e$ with $\sph_+$ is zero.

To finish, we substitute \eqref{eq:e} into Proposition~\ref{pr:pairing}.
We compute
$$\pm 1 = \langle \hyper_+ , e \rangle = a \cdot 0 + b \cdot \pm 1 \, ,$$
therefore $b = \pm 1$, and
$$0 = \langle \sph_+, e \rangle = \pm a \pm b \chi (S^{d-1}) \, .$$
When $d$ is even the Euler characteristic of the $(d-1)$-dimensional
sphere vanishes together with $a$.
$\Cox$

\section{Proof of the main theorem and Theorem~\protect{\ref{th:actual}}} 
\label{sec:proof}

We are now ready to prove Theorem~\ref{th:actual}, and 
thus obtain our main Theorem~\ref{th:lacuna}.
At each stage it is easiest to prove the result for fixed $\rhat$
and then argue by compactness that the conclusion holds for all
$\rhat \in K$.  We start with a localization result.  Use the
notation $\I_c$ to denote intersection class with respect to 
the perturbed variety $\sing_c$.

\begin{lem}\label{lem:split}
Fix $\rhat \in K$.  Under the hypotheses of Theorem~\ref{th:rel zero}, 
there is an $\ee > 0$ such that the intersection class $\I_c (\TT,\TT')$ 
is 
$$ \I_c = [S_c] + [\gamma_c] \, , $$
where the cycle $\gamma_c (\rhat)$ representing the class $[\gamma_c] 
\in H_{d-1}(\sing_c)$ is supported in $\sing_c(<-\ee)$ with respect to 
$h_{\rhat}$.
\end{lem}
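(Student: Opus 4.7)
The plan is to deduce the decomposition directly from Theorem~\ref{th:rel zero} via the long exact sequence of the pair $(\sing_c, \sing_c(\leq -\ee))$. Both $\I_c(\TT, \TT')$ and $S_c$ are bona fide absolute cycles in $\sing_c$, defining classes in $H_{d-1}(\sing_c)$; by Theorem~\ref{th:rel zero}, their images in $H_{d-1}(\sing_c, \sing_c(\leq -\ee))$ agree up to sign. Consequently, in the long exact sequence
\[
H_{d-1}(\sing_c(\leq -\ee)) \xrightarrow{i_*} H_{d-1}(\sing_c) \xrightarrow{\pi} H_{d-1}(\sing_c, \sing_c(\leq -\ee)),
\]
the difference $[\I_c(\TT,\TT')] \mp [S_c]$ lies in $\ker \pi = \operatorname{im} i_*$. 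Exactness then yields a cycle $\gamma_c$ supported on $\sing_c(\leq -\ee)$ whose image in $H_{d-1}(\sing_c)$ is $[\I_c] \mp [S_c]$, giving the required homological decomposition. To obtain support in the strict sublevel set $\sing_c(-\ee)$ rather than the closed one, I would run the argument with a slightly smaller $\ee' < \ee$ chosen from the allowed range of Corollary~\ref{cor:slab}, so that the resulting $\gamma_c$ lies in $\sing_c(\leq -\ee') \subset \sing_c(-\ee)$.

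A chain-level version of this argument, useful for the integral estimates later, proceeds as follows. Start from the representative $\Xi$ of $\I_c$ supplied by Theorem~\ref{th:two cp}, which is supported on $B_\rho(\zz_*) \cup \{h_{\rhat} \leq h_* - \ee\}$, and split $\Xi = \Xi_{\mathrm{top}} + \Xi_{\mathrm{bot}}$ into its portion in $\sing_c \cap \slab$ and its portion below height $h_* - \ee$. Theorem~\ref{th:rel zero}, applied to the local relative group $H_{d-1}(\sing_c \cap \slab, \bottom)$, produces a $d$-chain in $\sing_c \cap \slab$ whose boundary equals $\Xi_{\mathrm{top}} \mp S_c$ modulo a chain in $\bottom$. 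Adding this correction converts $\Xi_{\mathrm{top}}$ into $\pm S_c$ at the cost of a chain that can be absorbed into $\Xi_{\mathrm{bot}}$, yielding the desired cycle $\gamma_c$ supported below height $h_* - \ee$.

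For uniformity over $\rhat \in K$: the path from $\xx$ to the descending component chosen in Theorem~\ref{th:two cp}, the slab geometry of Corollary~\ref{cor:slab}, and the splitting just described are all stable under small perturbations of $\rhat$, since the relevant transversality and height-decrease conditions are open. For each $\rhat_0 \in K$ one can therefore use a single path and a single value of $\ee$ throughout an open neighborhood of $\rhat_0$, producing a continuously varying family of cycles $\gamma_c(\rhat)$. Covering the compact set $K$ by finitely many such neighborhoods produces a family with volumes bounded uniformly in $\rhat$. The main obstacle, which is more bookkeeping than mathematics, is to ensure that the sign in Theorem~\ref{th:rel zero} is absorbed consistently into the orientation of $S_c$ so that the decomposition in the lemma reads $[\I_c] = [S_c] + [\gamma_c]$ with the stated sign, and that the strict versus non-strict sublevel sets are reconciled by the small margin in $\ee$ noted above.
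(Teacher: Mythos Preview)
Your first paragraph is exactly the paper's proof: apply Theorem~\ref{th:rel zero} so that $\I_c - [S_c]$ dies under the map $H_{d-1}(\sing_c) \to H_{d-1}(\sing_c, \sing_c(<-\ee))$, then use exactness to pull it back to a class supported in the sublevel set. The paper's proof is in fact only those two sentences; your additional chain-level description and the uniformity discussion over $K$ go beyond what this particular lemma asserts (it fixes $\rhat$), though they anticipate arguments used immediately afterward in Corollary~\ref{cor:compact} and Lemma~\ref{lem:lift}.
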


\begin{proof} 
By Theorem~\ref{th:rel zero}, $\I_c - S_c$ is mapped to zero in the
second map of the exact sequence
$$\ldots\to H_{d-1}(\sing_c(<-\ee)) \to H_{d-1}(\sing_c) 
   \to H_{d-1}(\sing_c,\sing_c(<-\ee))\to\ldots$$
Hence $\I_c - [S_c]$ is represented by a class in $H_{d-1}(\sing_c(<-\ee))$.
\qed
\end{proof}

Let $\locus$ denote the singular locus of $\sing$, that is, the
set $\{ \zz \in \sing : \grad Q(\zz) = \zero \}$.  The point 
$\zz_*$ is a quadratic singularity, thus isolated, and 
we may write
$\locus = \{ \zz_* \} \cup \locus'$ where $\locus'$ is separated
from $\zz_*$ by some positive distance.

\begin{cor}\label{cor:compact}
If the real dimension of $\locus$ is at most $d-2$ then, for 
some $\delta > 0$, the cycles $\{ \gamma_c (\rhat) : 0 < |c| < \delta , 
\rhat \in K \}$ may be chosen so as to be simultaneously supported
by some compact $\bpt$ disjoint from $\locus$.
\end{cor}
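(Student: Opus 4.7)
The plan is to read off a compactly supported representative of $\gamma_c(\rhat)$ from the cobordism constructed in Theorem~\ref{th:two cp}, perturb this cobordism slightly off $\locus$ using a dimension count, and then obtain uniformity over $K$ by a finite covering argument. The sphere piece $S_c$ is automatically localized near $\zz_*$, so removing it leaves a candidate for $\gamma_c(\rhat)$ lying in a compact subset of $\{h_\rhat \leq h_* - \ee\}$.

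First, I would promote the pointwise $\ee(\rhat) > 0$ from Lemma~\ref{lem:split} to a uniform constant. The parameters $\rho, \ee_*, c_*$ appearing in Corollary~\ref{cor:slab}, the gradient-like vector field of Corollary~\ref{cor:vv}, and the choice of descending torus $\TT(\xx')$ all depend continuously on $\rhat$, so each $\rhat_0 \in K$ admits an open neighborhood $N(\rhat_0) \subseteq K$ on which a single cobordism $\bH_{\rhat_0}$ (from $\TT$ to a common $\TT(\xx')$) and a single $\ee > 0$ realize the conclusion of Lemma~\ref{lem:split} for every $\rhat \in N(\rhat_0)$. Extracting a finite subcover $K \subseteq N(\rhat_1) \cup \cdots \cup N(\rhat_N)$ yields uniform constants $\ee, \delta > 0$ and a finite family of cobordisms $\bH_1, \ldots, \bH_N$, each a compact real submanifold-with-boundary of $\Comp^d$ of real dimension $d+1$.

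Next, I would perturb each $\bH_i$ to be disjoint from $\locus$. Write $\locus = \{\zz_*\} \cup \locus'$ with $\locus'$ at positive distance from $\zz_*$ and of real dimension at most $d-2$. Since $(d+1) + (d-2) = 2d - 1 < 2d = \dim_\Real \Comp^d$, a generic smooth perturbation of $\bH_i$, supported away from $B_{\rho/2}(\zz_*)$ and away from the boundary tori, makes $\bH_i$ transverse to $\locus'$ and hence disjoint from it; the compactness of $\bH_i$ and closedness of $\locus'$ then yields $d(\bH_i, \locus') > 0$. Because $\zz_*$ is isolated in $\locus$, the portion of $\bH_i \cap \sing_c$ lying inside $B_{\rho/2}(\zz_*)$ (which assembles into $S_c$, not $\gamma_c(\rhat)$) is also at positive distance from $\locus'$ once $\rho$ is taken small enough.

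Finally I would assemble the compact set. By Theorem~\ref{th:two cp}, the chain $\bH_i \cap \sing_c$ is supported in $B_\rho(\zz_*) \cup \{h_\rhat \leq h_* - \ee\}$, with the $S_c$ piece collected inside $B_\rho(\zz_*)$ and $\gamma_c(\rhat)$ identified with the complementary piece in $\{h_\rhat \leq h_* - \ee\}$. Setting $\bpt_i := \bH_i \cap \{h_{\rhat_i} \leq h_* - \ee/2\}$ (after shrinking each $N(\rhat_i)$ so that continuity of $h_\rhat$ in $\rhat$ gives $\{h_\rhat \leq h_* - \ee\} \cap \bH_i \subseteq \bpt_i$ for all $\rhat \in N(\rhat_i)$) and $\bpt := \bigcup_{i=1}^N \bpt_i$ produces a compact subset of $\Comp^d$ disjoint from $\locus$ that simultaneously supports $\gamma_c(\rhat)$ for every $0 < |c| < \delta$ and $\rhat \in K$. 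The main obstacle is carrying out the transversal perturbation off $\locus'$ without disturbing the delicate local structure near $\zz_*$ that Theorem~\ref{th:two cp} relies upon; the resolution is that the perturbation can be made to vanish identically on $B_{\rho/2}(\zz_*)$, which is legitimate precisely because $\{\zz_*\}$ is an isolated point of $\locus$ and so the perturbation need only avoid $\locus'$.
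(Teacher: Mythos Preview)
Your proposal is correct and follows essentially the same approach as the paper: use the single cobordism from Theorem~\ref{th:two cp} (valid for all small $c$) to get compactness, invoke the dimension inequality $(d{+}1)+(d{-}2)<2d$ to generically perturb off $\locus'$ while leaving a neighborhood of $\zz_*$ untouched, and then pass from a fixed $\rhat$ to all of $K$ by a finite-cover/continuity-of-$h_{\rhat}$ argument. The paper organizes the two steps in the opposite order (first fix $\rhat$, then cover $K$), but the content is the same.
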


\begin{unremark}
In the case where $\locus$ is the singleton $\{ \zz_* \}$ or 
when any additional points $\zz \in \locus$ satisfy $h(\zz) 
\leq h(\zz_*) - \ee$ for all $\rhat \in K$, the proof is just
one line.  This is all our applications presently require, 
however the greater generality (although most likely not best 
possible) may be useful in future work. 
\end{unremark}

\begin{proof}
The first step is to prove that for fixed $\rhat$ we may choose
$\{ \gamma_c (\rhat) : 0 < |c| < \delta \}$ satisfying the
conclusion of Lemma~\ref{lem:split}, all supported on
a fixed compact set $\bpt$ avoiding $\locus$.  It suffices
to avoid $\locus'$ because the condition of being supported
on $\sing_{-\ee}$ immediately implies separation from $\zz_*$.
The construction in Theorem~\ref{th:two cp} produces a single
homotopy for all $c$, which is then intersected with each $\sing_c$.
It follows that the union of the intersection cycles is 
contained in a compact set.  By the dimension assumption,
a small generic perturbation avoids $\locus'$ while still
being separated from $\zz_*$.

Having seen that for fixed $\rhat$ the cycles 
$\{ \gamma_c (\rhat) : 0 < |c| < \delta \}$ may be chosen 
to satisfy the conclusions of Lemma~\ref{lem:split} and
to be supported on a compact set $\bpt (\rhat)$ avoiding $\locus$, the
rest is straightforward.  For each $\rhat$ there is a neighborhood
$\nbd (\rhat) \subseteq K$ such that $\shat \in \nbd$ and 
$h_{\rhat} (\zz) \leq h_{\rhat} (\zz_*) - \ee$ imply 
$h_{\shat} (\zz) \leq h(\zz_*) - \ee/2$.  Thus we may choose 
$\gamma_c (\shat) = \gamma_c (\rhat)$ to be independent of $\shat$
over $\nbd(\rhat)$.  Choosing a finite cover of $K$ by these neighborhoods, 
the union of the corresponding sets $\bpt (\rhat)$ supports the 
cycles $\gamma_c (\rhat)$ for all $c$ and $\rhat$.
\qed\end{proof}

Theorem~\ref{th:two cp} is a rather standard result about pushing 
the intersection class below height $h (\zz_*)$ except in a small
ball about $\zz_*$.  Our proof of Theorem~\ref{th:two cp} 
uses an unspecified torus $\TT'$ with 
polyradius in the descending component $B'$ of 
Definition~\ref{def:descending comp}, and is therefore
not an explicit construction of a chain representing $\TT$,
but is sufficient to prove Theorem~\ref{th:rel zero} and 
Lemma~\ref{lem:split} describing the relative homology
of the pair $(\sing_c , \sing_c (\leq - \ee))$.

Equation~\eqref{eq:T} in Lemma~\ref{lem:lift} is all we need to 
complete the proof of Theorem~\ref{th:actual}.  
However, in Section~\ref{sec:GRZ} we study the asymptotic contributions 
of lower critical points, these being the dominant contributions in
the lacuna setting, when $d$ 
is even and greater than $2k$.  For this purpose we need a more explicit
description of a cycle homologous to $\TT$ at height 
below the critical point: the quadric approximation of $\sing_c$ 
is only good in a neighbourhood of the critical point, 
however finding a torus disjoint from $\sing$
may require traveling further down.
The next lemma finds an explicit cycle homologous to $\TT$, having 
height at most $-\ee$ except for an abitrarily small tube around
a piece of $\sing_{\leq 0}$, in two ways: one when a torus $\TT'$ 
at height $-\ee$ can be chosen disjoint from $\sing$ and a different 
way when $\TT'$ intersects $\sing$.

\begin{lem} \label{lem:lift} 
Choose $\xx \in B$, let 
$\yy = -\ee \xx$, let $\alpha : [0,1] \to \R^d$ be the line
segment from $\xx$ to $\yy$ and define $\TT' = \TT (\yy)$.
\begin{enumerate}[$(i)$]
\item Suppose that $\TT'$ is disjoint from $\sing$, 
as in the proof of Theorem~\ref{th:two cp}.  
Then there exist $\ee , c_*, c' > 0$ 
such that 
\begin{equation} \label{eq:T}
[\TT] = [\leray S_{c}] + [\leray \gamma_{c'}] + [\TT'] 
\end{equation}
for all $|c| < c_*$, where $\gamma_{c'}$ is the cycle in the 
conclusion of Lemma~\ref{lem:split} with $c$ replaced by $c'$.
\item Alternatively, if $\TT'$ is not disjoint from $\sing$
then instead of~\eqref{eq:T} one has
\begin{equation} \label{eq:T+}
[\TT] = [\leray S_{c}] + [(\leray \gamma_{c'})_{\geq - \ee} \# \TT'] 
\end{equation}
where $(\leray \gamma_{c'})_{\geq - \ee} \# \TT'$ is the
connected sum of $(\leray \gamma_{c'})_{\geq - \ee}$ and
$\TT'$ along their common boundary
$(\leray \gamma_{c'})_{= -\ee} = \partial (\TT' \setminus 
\disk \gamma_{c'})$.
\end{enumerate}
\end{lem}

\noindent{\sc Proof:} 
For the compact $\bpt$ described in 
Corollary~\ref{cor:compact}, the intersection of $\sing$ with $\bpt$ 
is smooth.  By Proposition~\ref{pr:tube}, there is a neighborhood
of $\bpt$ in $\sing_* \setminus \locus$ that can be parameterized as 
a $2$-dimensional vector bundle over some compact subset 
$\bpt' \subseteq \sing$.  This bundle is naturally coordinatized 
by the values of $Q$ so that for some small $c'_* > 0$ the 
tubular vicinity around $\sing_\bpt$ can be identified with 
$D' \times \sing_\bpt$ for $\D' := \{c \in \Comp : |c| < c'_* \}$. 
We will denote this vicinity as $\sing_\bpt^{D'}$.

Lemma \ref{lem:split} implies that 
$$ [\TT]=[\leray S_c]+[\leray\gamma_c] +[\TT'] $$
for all small enough $|c|$ (which we may assume from now on 
to be smaller than $c_*<c'_*$). The class $\leray\gamma_c$ 
can be represented by a small tube around a cycle $\gamma_c
\in \sing_c$, which is entirely supported by $\sing_\bpt^{D'}$.
Using the product structure $\sing_\bpt^{D'} \cong D' \times 
\sing_\bpt$ we can identify this tube with a product of a 
small circle (of radius $\rho(c) > 0$) around $c \in D'$ and 
$\gamma_*$, a cycle in the smooth part of $\sing$ obtained by 
projection of $\gamma_c$.  When $c_*$ and $\rho$ are 
sufficiently small, the maximum height of $\gamma_*$ is
$h_* - \ee'$ for some $\ee' > 0$.

There exists a homeomorphism of the annulus $D' - D_{\rho(c)}(c)$ 
fixing its outer boundary and sending the small circle 
$\partial D_{\rho(c)}(c)$ around $c$ into the circle 
of radius $c_*$.  Extend this homeomorphism, fiberwise, 
to all of the tubular vicinity $\sing_\bpt^{D'}$.  Furthermore, 
extend it to the complement of $\sing_\bpt^{D'}$ in such a way 
that it is identity outside of a small vicinity of 
$\sing_\bpt^{D'}$ (and thus near $S_c$ and $\TT, \TT'$). 
Choosing $c_*$ smaller if necessary, and taking $\leray \gamma$
to be the $c_*$-tube around $\gamma_*$ for all $c$ with $|c| < c_*$,
this cycle avoids $\sing_c$ for all $c$ with $|c| < c_*$ and 
has maximum height less than $h_* - \ee$ where $\ee$ is positive
once $c_*$ has been chosen sufficiently small with respect to $\ee'$.
This completes the proof of case~$(i)$. 

\begin{figure}[!ht]
\centering
\includegraphics[height=2in]{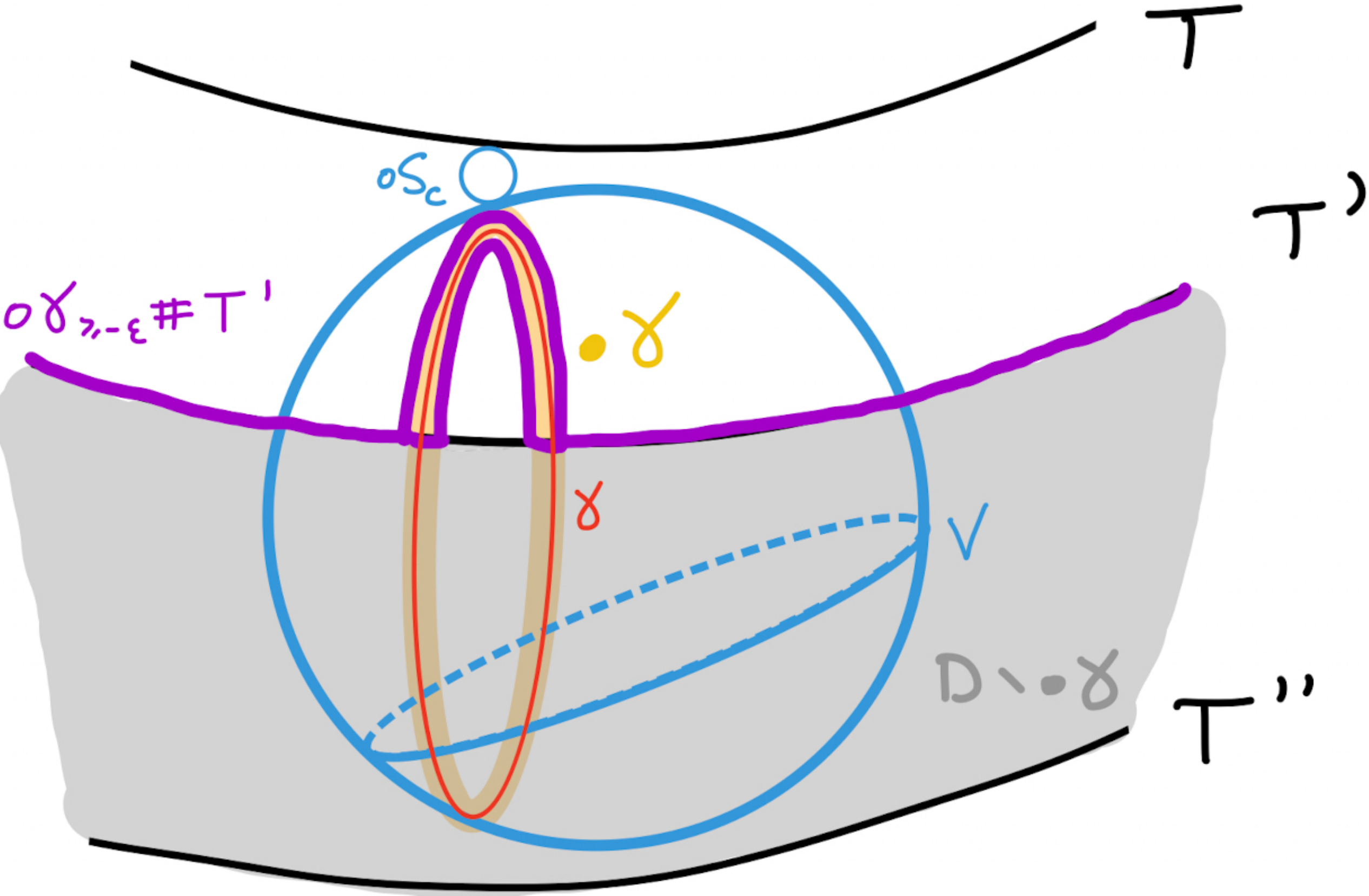}
\caption{Removing a neighborhood of $\sing$ from an expanding torus 
homotopy creates a region in $\M$ whose boundary is a useful cobordism.
The cobordism in~\eqref{eq:T''} is given by
$(\leray \gamma)_{\geq - \ee} \# \TT' + \partial(D\setminus \disk \gamma)
+ \leray S_c$.}
\label{fig:create cycle}
\end{figure}

For case~$(ii)$, let $\alpha: [0,1] \to \R^d$ be as in Theorem~\ref{th:two cp} 
parametrizing the line segment from $\xx$ to some $\yy \in B'$.  Let
$\TT''$ denote the torus with polyradius $\yy$ and let $\TT'$ be the 
torus at height $\ee'$ which is the slice of the homotopy in
Theorem~\ref{th:two cp} for some $t' \in (0,1)$.  Let $\yy'$ be
the corresponding basepoint.

The homotopy swept out by tori with polyradii $\alpha (t)$ intersects $\sing$ 
and defines an intersection cycle $\gamma$.  The homotopy
$\{ \Ll^{-1} \alpha (t) : 0 \leq t \leq 1 \}$ intersects $\sing$, 
yielding an intersection $(d-1)$-chain $\gamma_{\geq \ee}$, 
which is not a cycle.  Its boundary is the $(d-1)$-cycle 
$\gamma_{= -\ee}$ (assuming, without loss of generality, transversal intersections).
These objects are illustrated in Figure~\ref{fig:create cycle}.
Comparing the known expression for $[\TT]$  
\begin{equation} \label{eq:T''}
[\TT] = [\leray S_c] + [\leray \gamma] + [\TT'']
\end{equation}
to the desired expression for $[\TT]$
$$[\TT] = [\leray S_c] + [(\leray \gamma)_{\geq - \ee} \# \TT']$$
using the chain-level identity 
$$(\leray \gamma)_{\geq - \ee} \# \TT' =
   (\leray \gamma)_{\geq - \ee} + (\TT' \setminus \disk (\gamma)) \, ,$$
we see that the difference is represented by the chain
\begin{eqnarray*}
&&   (\leray \gamma)_{\geq - \ee} + (\TT' \setminus \disk (\gamma)) 
   - \leray \gamma - \TT''- \TT'' \\
& = & - (\leray \gamma)_{\leq - \ee} + (\TT' \setminus \disk (\gamma)) 
   - \TT'' \\
& = & \partial (D \setminus \disk (\gamma)) \, ,
\end{eqnarray*}
where $D$ is the $(d+1)$-chain given by the $\Ll$-preimage of
$\{ \alpha (t) : t' \leq t \leq 1 \}$.  Because the difference 
is a boundary, this establishes~\eqref{eq:T+}.	
$\Cox$

\noindent{\sc Proof of Theorem~\ref{th:actual}}
Let $\gamma(\rhat)$ be chosen as in the first conclusion of
Lemma~\ref{lem:lift}, let 
$$ \Gamma(\rhat):=\leray\gamma(\rhat) + \TT(\yy) \, ,$$
which must satisfy condition~$(iii)$ of Theorem~\ref{th:actual},
and choose $\Gamma_c := \leray S_c$.  Conclusion~$(i)$ follows from
the choice of $c_*$ at the end of the proof of Lemma~\ref{lem:lift}.
Conclusion~$(ii)$ is equation~\eqref{eq:T}.  As the compact cycle 
$\Gamma$ is independent of $c$, equation~\eqref{eq:omega_c} follows 
immediately from convergence of $\omega_c$ to $\omega$ on $\Gamma$ 
for each $\rr$.  It remains only to verify~\eqref{eq:gamma_c}.  
 
To prove~\eqref{eq:gamma_c}, choose a local coordinate system in 
which $Q$ is reduced to its quadratic part, and rescale it by $c^{1/2}$ 
(either root will work).  In this coordinate system 
$\uu = \vv + i\ww$, we are integrating over the cycle 
$\leray S_1$, where
$$ S_1=\left\{v_1^2+\sum_{k=2}^d w_k^2=1; w_1=v_2=\cdots=v_d=0\right\} \, . $$
In the new local coordinates $\zz = \zz_* + c^{1/2} \uu \psi(\uu)$ 
with $\psi$ holomorphic and $\psi(0) = 1$, the form 
$\zz^{-\rr} \omega_c$ becomes 
\begin{equation}
\zz^{-\rr} \omega_c = \zz_*^{-\rr-\one} (1 + c^{1/2} \uu / \zz_*)^{-\rr-\one} 
   \frac{P(\zz_* + c^{1/2} \uu \psi(\uu))}{c^k q(\uu)^k} \, c^{d/2} \, d\uu
   = c^{d/2-k} \zz_*^{-\rr-\one} H(\uu,c) \, d\uu \, ,
\end{equation}
where $q$ is the quadric~\eqref{eq:q} and 
$$H(\uu , c ) := (1 + c^{1/2} \uu / \zz_*)^{-\rr-\one} 
   \frac{P(\zz_* + c^{1/2} \uu \psi(\uu))}{q(\uu)^k} \, .$$
The function $H(\uu,c)$ is holomorphic in $\uu$ and bounded on $\leray S_1$ 
uniformly in $c$.  As $c \to 0$, $H(\uu , c) \to P(\zz_*) / q(\uu)^k$ and
the conclusion~\eqref{eq:gamma_c} follows.
$\Cox$

\section{Application to the GRZ function with critical parameter}
\label{sec:GRZ}

Having established the exponential drop, this section extends
Theorem~\ref{th:lacuna} to obtain more precise asymptotics for $a_\rr$.
Most of what follows concentrates on the GRZ example, however 
we first state a result holding more generally in the presence 
of a lacuna. A \emph{critical point at infinity}, formally defined
in~\cite{BMP-morse}, can be viewed as a sequence of singularities going off
to infinity in such a way that the limit of the differential of the 
height function at the points approaches zero. Here, we note only
that there is an effective test for critical points at 
infinity~\cite[Algorithm 1]{BMP-morse} and that 
our GRZ example does not have any.

\begin{thm} \label{th:exp}
Assume the hypotheses of Theorem~\ref{th:lacuna}.  Fix $\rhat$ and 
let $c_1 > c_2$ be the heights of the two highest critical points,
the highest being the quadric singularity.  Suppose, in addition, 
that $Q$ has no critical points at infinity in direction $\rhat$
at any height in $[c_2 , c_1]$.  Then for every $\ee > 0$ there 
is a neighborhood $\nbdhat$ of $\rhat$ such that as $\rr \to \infty$ 
with $\rr / |\rr| \in \nbdhat$, 
$$a_\rr = O \left ( e^{(c_2 + \ee) |\rr|} \right ) \, .$$
\end{thm}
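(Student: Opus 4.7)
The proof is essentially a direct combination of Theorem~\ref{th:lacuna} with a Morse-theoretic deformation. The strategy is: first, use Theorem~\ref{th:lacuna} to represent $a_\rr$ as an integral over a compact cycle lying strictly below the quadric height $c_1$; second, push that cycle further down past the non-critical range $(c_2, c_1)$ to lie below $c_2 + \ee$; third, estimate the resulting integral trivially.

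\textbf{Step 1: Initial drop.} Apply Theorem~\ref{th:lacuna}(i) to obtain $\ee' > 0$ (which can be chosen as small as we like) and a compact cycle $\Gamma(\rhat)$ supported in $\M(-\ee')$ with
\[
a_\rr = \int_{\Gamma(\rhat)} \zz^{-\rr}\,\frac{P}{Q^k}\,\frac{d\zz}{\zz}.
\]
Thus at the outset we already know the contour lies below the height $c_1$ of the cone singularity.

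\textbf{Step 2: Further deformation to just above $c_2$.} The form $\zz^{-\rr}(P/Q^k)(d\zz/\zz)$ is closed on $\M$, so any two cycles homologous in $\M$ give the same integral. Under the hypothesis that $Q$ has no critical points (and no critical points at infinity) at heights in $[c_2, c_1 - \ee']$ in the direction $\rhat$, the gradient flow of $-h_{\rhat}$ on $\M$ has no escape to infinity and no equilibria in this range; the deformation result of~\cite{BMP-morse} (a Palais--Smale-type argument for the filtration $\M(<s)$) then produces a continuous deformation within $\M$ carrying $\Gamma(\rhat)$ to a homologous compact cycle $\Gamma'(\rhat)$ supported in $\M(<c_2+\ee)$, with volume uniformly bounded as $\rhat$ varies in a small neighborhood of the chosen direction. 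This is the step where the no-critical-points-at-infinity condition is essential: without it, sublevel sets could change homotopy type through escape to infinity and the flow would not push $\Gamma(\rhat)$ down monotonically.

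\textbf{Step 3: Trivial estimate and uniformity.} On the compact cycle $\Gamma'(\rhat)$, by definition of $h_{\rhat}$ we have $|\zz^{-\rr}| = \exp(|\rr|\,h_{\rhat}(\zz)) \leq \exp((c_2+\ee)|\rr|)$, while $P/Q^k$ and $d\zz/\zz$ are bounded and the volume of $\Gamma'(\rhat)$ is bounded. Hence $|a_\rr| = O(e^{(c_2+\ee)|\rr|})$. For the uniformity in a neighborhood $\nbdhat$ of $\rhat$, note that any cycle $\Gamma'(\rhat_0)$ constructed as above continues to satisfy all the required properties (lying below height $c_2 + \ee$, avoiding $\sing$, closed form integral unchanged) for every $\rhat$ in a sufficiently small open neighborhood of $\rhat_0$, so compactness gives a finite cover producing a uniform bound. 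The main obstacle is entirely concentrated in Step~2: the existence of the downward deformation across an interval of non-critical heights, on the non-compact variety $\M$, is what requires the invocation of~\cite{BMP-morse}.
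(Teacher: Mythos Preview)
Your proof is correct and follows essentially the same route as the paper: apply Theorem~\ref{th:lacuna} to drop the cycle below $c_1$, invoke the deformation result of~\cite{BMP-morse} (stated in the paper as Proposition~\ref{pr:morse}) to push it down to height $c_2+\ee$, and estimate trivially. The only cosmetic difference is ordering; your treatment of the uniformity in $\rhat$ is in fact slightly more explicit than the paper's.
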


Theorem~\ref{th:exp} is an almost immediate consequence of Theorem~\ref{th:lacuna}
and the following result.

\begin{pr}[\protect{\cite[Theorem~2.4~$(ii)$]{BMP-morse}}] \label{pr:morse}
Let $[a,b]$ be a real interval and suppose that $\sing_*$ has 
no finite or infinite critical points $\zz$ with 
$h_{\rhat} (\zz) \in (a,b]$.  Then for any $\ee > 0$, any chain 
$\Gamma$ of maximum height at most $b$ can be homotopically deformed
into a chain $\Gamma'$ whose maximum height is at most $a + \ee$.
\end{pr}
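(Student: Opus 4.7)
The plan is to compose two results already available in the paper: Theorem~\ref{th:lacuna} produces a representing chain sitting just below the quadric singularity, and Proposition~\ref{pr:morse} then slides that chain down past the ``no critical point'' slab all the way to just above the next critical height. Bounding the integrand termwise on the deformed chain then yields the exponential rate.

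More concretely, I would proceed in three steps. First, fix $\rhat$ and choose some small $\ee_0>0$. Theorem~\ref{th:lacuna}(i) produces a compact cycle $\Gamma_0(\rhat)$ of bounded volume, supported in $\M(-\ee_0)$, such that
$$ a_\rr = \int_{\Gamma_0(\rhat)} \zz^{-\rr} \frac{P(\zz)}{Q(\zz)^k} \frac{d\zz}{\zz}. $$
By construction the $h_{\rhat}$-height on $\Gamma_0(\rhat)$ is at most $c_1-\ee_0$. Second, observe that the hypothesis of the theorem — no critical points (finite or at infinity) with $h_{\rhat}$-value in $[c_2,c_1]$ — in particular gives no critical points with $h_{\rhat}$-value in $(c_2,c_1-\ee_0]$, since the only finite critical value in $[c_2,c_1]$ other than $c_2$ is $c_1$ itself. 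I would apply Proposition~\ref{pr:morse} with $[a,b]=[c_2,c_1-\ee_0]$ and parameter $\ee/2$, homotoping $\Gamma_0(\rhat)$ to a new chain $\Gamma_1(\rhat)$ of maximum $h_{\rhat}$-height at most $c_2+\ee/2$; the homotopy preserves the integral because the integrand is holomorphic on $\M$ and no crossing of $\sing$ occurs during the deformation.

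Third, estimate the integral directly. On $\Gamma_1(\rhat)$,
$$ |\zz^{-\rr}| = \exp\bigl(|\rr|\, h_{\rhat}(\zz)\bigr) \leq \exp\bigl(|\rr|(c_2+\ee/2)\bigr), $$
while $|P/Q^k \cdot d\zz/\zz|$ is bounded by a constant $M$ on the compact set $\Gamma_1(\rhat) \subseteq \M$, and the volume is bounded by some $V$. Thus
$$ |a_\rr| \leq V M \exp\bigl((c_2+\ee/2)|\rr|\bigr). $$
For the uniformity statement, let $\shat$ range over a small compact neighborhood of $\rhat$. The critical heights $c_1(\shat), c_2(\shat)$ vary continuously in $\shat$, and the same deformed chain $\Gamma_1(\rhat)$ (or, more safely, a finite family obtained by a finite cover) can be reused: for $\shat$ sufficiently close to $\rhat$, the $h_{\shat}$-height on $\Gamma_1(\rhat)$ is at most $c_2(\rhat)+\ee$, and $V,M$ depend continuously on $\shat$. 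Taking $\nbdhat$ small enough gives the claimed uniform bound.

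The main obstacle is really only conceptual: one must check that Proposition~\ref{pr:morse} is genuinely applicable to the chain produced by Theorem~\ref{th:lacuna}. This is why the hypothesis on critical points \emph{at infinity} matters — without it the gradient-like flow used to effect the deformation could carry part of $\Gamma_0(\rhat)$ off to infinity in $\C_*^d$, spoiling the volume bound needed in the final estimate. Everything else reduces to compactness and continuity arguments and the direct pointwise bound on $|\zz^{-\rr}|$.
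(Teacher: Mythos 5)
You have proved the wrong statement. The statement under review is Proposition~\ref{pr:morse} itself: the Morse-theoretic deformation lemma asserting that a chain of maximum height at most $b$ can be homotoped in $\M$ down to maximum height $a+\ee$ whenever the band $h_{\rhat}^{-1}((a,b])$ contains no critical points of $\sing_*$, finite or at infinity. Your proposal instead reconstructs the proof of Theorem~\ref{th:exp}: you take the cycle supplied by Theorem~\ref{th:lacuna}, then in your second step you explicitly ``apply Proposition~\ref{pr:morse} with $[a,b]=[c_2,c_1-\ee_0]$'' to slide it down, and finally you bound the integrand. Invoking Proposition~\ref{pr:morse} as a black box cannot constitute a proof of Proposition~\ref{pr:morse}; as an argument for the stated proposition it is circular, and nothing in your write-up supplies the missing deformation. (For what it is worth, the paper itself does not reprove this proposition either --- it is imported verbatim from \cite{BMP-morse}, Theorem~2.4~$(ii)$ --- and your three steps do track the paper's short proof of Theorem~\ref{th:exp} quite closely.)

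A genuine proof of Proposition~\ref{pr:morse} would have to construct the deformation: build a gradient-like vector field for $-h_{\rhat}$ on $\M$ (or on the relevant stratified space), show that the induced downward flow is defined for all time on the band $\{a < h_{\rhat} \leq b\}$, and show that every point of $\Gamma$ reaches height at most $a+\ee$ in finite, uniformly bounded time, so that the flow furnishes the required homotopy $\Gamma \simeq \Gamma'$. The hypothesis on critical points \emph{at infinity} is exactly what rules out loss of compactness --- trajectories escaping to infinity while their height stalls above $a+\ee$ --- and the absence of finite critical points is what guarantees the height strictly decreases along the flow. Your closing paragraph correctly identifies why the at-infinity hypothesis matters, but only as a precondition for citing the proposition, not as part of an argument establishing it. To repair the submission you would either need to carry out this flow construction (essentially reproducing the argument of \cite{BMP-morse}) or acknowledge that the proposition is taken as an external input, as the paper does.
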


\noindent{\sc Proof of Theorem}~\ref{th:exp}: 
Apply Proposition~\ref{pr:morse} with $a = c_2$ and $b = c_1$,
resulting in the chain $\Gamma'$.  Applying Theorem~\ref{th:lacuna}
and the homotopy equivalence of $\Gamma$ and $\Gamma'$ in $\M$, 
$$a_\rr = \int_{\Gamma'} \zz^{-\rr} \frac{P}{Q^k} \frac{d\zz}{\zz} + R$$
where $R$ decreases super-exponentially, and in the polynomial case
is in fact zero for all but finitely many $\rr$.  The height
condition on $\Gamma'$ implies that this integral is bounded
above by the volume of $\Gamma'$, multiplied by the maximum value 
of $|F|$ on $\Gamma'$, multiplied by $e^{(c_2 + \ee) |\rr|}$.  
$\Cox$

In the remainder of this section, as in Example~\ref{eg:GRZ}, we let
\begin{equation} \label{eq:GRZ}
F(\zz) := \frac{1}{1 - z_1 - z_2 - z_3 - z_4 + 27 z_1 z_2 z_3 z_4} \, .
\end{equation}
Fix $\rhat$ to be the diagonal direction.  We will prove 
Theorem~\ref{th:zeta} by first computing an estimate for 
$a_\rr$ up to an unknown integer factor $\nint$.  We then use the
theory of D-finite functions and rigorous numerical bounds to 
find the value of $\nint$.  Lastly, we indicate how the value
of $\nint$ could possibly be determined by topological methods.
In order to discuss the sets $\sing (\-\ee)$ relative to different
critical heights, we extend the notation in~\eqref{eq:filtration} via
$$\sing_{\leq t} := \sing \cap \{\zz : h_{\rhat} (\zz) < t \} \, .$$ 

\begin{pr}[\protect{\cite[Proposition~2.9]{BMP-morse}}] \label{pr:morse decomp}
Let $F = P/Q$, $\sing$, the component $B$ and the coefficients $\{ a_\rr \}$ 
be as in Theorem~\ref{th:lacuna}.  Fix $\rhat$ and suppose the
critical values are $c_1 > c_2 > \cdots > c_m$ with $c_1$ being 
the height of the quadric singularity $\zz_*$.  Suppose there are
no critical points at infinity of finite height.  Then there is
a decomposition $\CC = \sum_{j=1}^m \leray \gamma_i$ in $H_d (\sing_*)$
such that for each $j$, $\gamma_j \in \sing_{\leq c_j}$ and is 
either zero in $H_{d-1} (\sing_{\leq c_j})$ or projects to a nonzero
element of $H_{d-1} (\sing_{\leq c_j} , H_{d-1} (\sing_{c_j - \ee}))$.
The decomposition is not unique, but the least $j$ for which $\gamma_j
\neq 0$ and the projection $\pi \gamma_j$ to 
$H_{d-1} (\sing_{\leq c_j} , H_{d-1} (\sing_{c_j - \ee}))$ 
is well defined.
\end{pr}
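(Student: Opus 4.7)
The plan is to reduce the statement to a filtered Morse-theoretic decomposition on $\sing_*$ and then run that filtration from the top critical height downwards. First, pick $\xx'$ in a descending component of $\amoeba(Q)^c$ (Definition~\ref{def:descending comp}) low enough that $h_{\rhat}(\xx') < c_m$. Proposition~\ref{pr:intersection} gives $[\TT] = \leray[\I(\TT,\TT(\xx'))] + [\TT(\xx')]$ in $H_d(\M)$, and for polynomial numerator the torus contribution vanishes for all but finitely many $\rr$. Absorbing this term, the task reduces to decomposing the intersection class $[\I] := [\I(\TT,\TT(\xx'))]\in H_{d-1}(\sing_*)$ into pieces indexed by the critical heights, after which $\leray$ lifts the decomposition back to $H_d(\M)$.

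I would then run descending stratified Morse theory on $\sing_*$ with respect to $h_{\rhat}$. The hypothesis of no critical points at infinity of finite height allows repeated invocation of Proposition~\ref{pr:morse}: between consecutive critical values the cycle can be pushed down freely, so all obstructions live at the critical heights. Inductively, starting at the top, a representative of $[\I]$ is deformed to height just above $c_1$; its image in $H_{d-1}(\sing_{\leq c_1},\sing_{\leq c_1-\ee})$ is represented by a chain $\gamma_1$ supported in small neighborhoods of the critical points at that level, whose local relative homology at the quadric point $\zz_*$ is the rank-two group of Proposition~\ref{pr:local}. The difference $\I-\gamma_1$ is then homologous to an absolute cycle $\I_1$ in $\sing_{\leq c_1-\ee}$, which is fed back into the construction at level $c_2$, and so on. After $m$ steps, $\I_m$ lies strictly below $c_m$ and is null-homologous by Proposition~\ref{pr:morse}, yielding $[\I] = \sum_{j=1}^m [\gamma_j]$ and hence $[\CC] = \sum_j \leray[\gamma_j]$.

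Well-definedness of the projection $\pi\gamma_j$ in $H_{d-1}(\sing_{\leq c_j},\sing_{\leq c_j-\ee})$ follows from the long exact sequence of that pair: two lifts of the same relative class differ by a cycle in $\sing_{\leq c_j-\ee}$, which lies in the kernel of the projection. Since every piece extracted at heights $c_i$ with $i<j$ is by construction supported strictly above $c_j$, it contributes zero to the relative group at level $c_j$; hence the least $j$ for which $\gamma_j\neq 0$ and the corresponding projected class are intrinsic to $[\CC]$.

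The main obstacle will be the level that contains the non-smooth quadric singularity $\zz_*$, where ordinary Morse theory on a smooth manifold is inapplicable. There I would invoke Goresky--MacPherson stratified Morse theory, whose local Morse data at an isolated conical singularity is captured precisely by the relative homology computed in Section~\ref{sec:relative}: the rank-two group with generators $\hyper_-$ and $\sph_-$. Pinning down the specific local piece $\gamma_j$ extracted at this height requires the intersection-pairing calculation of Proposition~\ref{pr:pairing} together with Theorem~\ref{th:rel zero}, and it is exactly this step that ties the abstract Morse filtration to the explicit exponential drop exploited elsewhere in the paper.
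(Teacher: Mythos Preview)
The paper does not supply its own proof of this proposition: it is quoted verbatim from~\cite[Proposition~2.9]{BMP-morse} and used as a black box, so there is no argument here to compare yours against. Your sketch is in the right spirit for how the cited result is proved --- iterate Proposition~\ref{pr:morse} between consecutive critical values and peel off a relative class at each level --- and the well-definedness of the first nonvanishing projection follows, as you say, from the long exact sequence of the pair.

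Two remarks on your write-up. First, in your well-definedness paragraph you claim that the pieces $\gamma_i$ with $i<j$ are ``supported strictly above $c_j$''; that is not what the construction gives (each $\gamma_i$ lives in $\sing_{\leq c_i}$, not above the next level). The correct argument is simpler: for the \emph{least} $j$ with $\gamma_j\neq 0$, all preceding $\gamma_i$ vanish by hypothesis, so $[\I]$ itself lies in the image of $H_{d-1}(\sing_{\leq c_j})$, and its projection to $H_{d-1}(\sing_{\leq c_j},\sing_{\leq c_j-\ee})$ is intrinsic, independent of any choices made in the decomposition. Second, your final paragraph overreaches: Proposition~\ref{pr:local}, Proposition~\ref{pr:pairing}, and Theorem~\ref{th:rel zero} are not needed for the existence of the Morse decomposition or for the well-definedness claim. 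They enter only later, when one wants to compute the actual integer coefficient attached to the quadric level (and in particular to show it vanishes in the relative group when $d$ is even). For the proposition as stated, stratified Morse theory at the isolated quadric point only needs to supply finite-rank local Morse data, not its explicit generators or pairing.
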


These cycles represent classes in integer homology, thus giving 
a representation of $a_\rr$ as integer combinations of integrals 
over homology generators of the respective relative homology
groups.  Such integrals are generally computable via saddle-point
integration.  However, determining the integer coefficients appearing 
in this representation can be extremely difficult, related to the 
so-called connection problem for solutions of differential equations.  
Solving the system $Q=0$ and $\nabla Q = \lambda \nabla h_{\rhat}$,
where $\lambda$ is an additional parameter, gives the set of 
critical points.

\begin{pr} \label{pr:crit set}
The critical points of $\sing$ are precisely the points 
$\zz_* := (1/3, 1/3, 1/3, 1/3)$, $\ww := (\zeta , \zeta, 
\zeta, \zeta)$ and $\ww' := \overline{\ww}$, where
$\zeta = (-1 + i \sqrt{2}) / 3$.  
There are no critical points at infinity.
The point $\zz_*$ is a quadric singularity.
$\Cox$
\end{pr}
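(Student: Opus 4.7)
The plan is to solve the logarithmic critical point equations directly in the diagonal direction, verify that $\zz_*$ is a real hyperbolic quadric singularity by local expansion, and rule out critical points at infinity via Newton polytope considerations.

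For the first step, with $\rhat = (1,1,1,1)/4$ the smooth critical point condition requires that $z_j \partial_{z_j} Q$ take a common value across $j$. A direct computation yields
\begin{equation*}
z_j \partial_{z_j} Q = -z_j + 27 z_1 z_2 z_3 z_4,
\end{equation*}
so equality across all $j$ forces $z_1 = z_2 = z_3 = z_4 = z$, reducing $Q = 0$ to the quartic $27 z^4 - 4 z + 1 = 0$. I would factor this as
\begin{equation*}
27 z^4 - 4 z + 1 = (3z - 1)^2 (3 z^2 + 2 z + 1),
\end{equation*}
whose double root $z = 1/3$ produces $\zz_*$ (and by multiplicity forces $\nabla Q (\zz_*) = \zero$), while the simple roots $z = (-1 \pm i \sqrt{2})/3$ of the quadratic factor produce $\ww$ and $\ww' = \bar{\ww}$.

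For the second step, I would verify directly that $Q(\zz_*) = 0$ and $\partial_{z_j} Q(\zz_*) = -1 + 27 \cdot (1/3)^3 = 0$. Setting $u_j := z_j - 1/3$ and expanding, the constant and linear terms cancel, and the quadratic part comes out to
\begin{equation*}
q_2(\uu) = 3 \sum_{i < j} u_i u_j = \tfrac{3}{2} \bigl( (\textstyle\sum_j u_j)^2 - \textstyle\sum_j u_j^2 \bigr).
\end{equation*}
Its associated symmetric matrix is $\tfrac{3}{2}(J - I)$, where $J$ is the $4 \times 4$ all-ones matrix; the eigenvalues are $9/2$ (multiplicity one, eigenvector $(1,1,1,1)$) and $-3/2$ (multiplicity three), giving signature $(1,3) = (1, d-1)$. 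A real linear change of coordinates then brings $q_2$ into the Lorentzian normal form required by Definition~\ref{def:quadratic}.

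For the third step, I would show there are no critical points at infinity by examining the Newton polytope $N(Q) = \mathrm{conv}\{\zero, \e_1, \e_2, \e_3, \e_4, \e_1 + \e_2 + \e_3 + \e_4\}$. The ACSV criterion reduces the question to verifying, for each face $F$ of $N(Q)$ on which the functional $\rhat$ is maximized on some facet, that the face polynomial $Q_F$ has no smooth logarithmic critical points on $(\Comp^*)^d$. This face-by-face verification is the main technical obstacle, but the full $S_4$ symmetry of $Q$ under coordinate permutation and the sparseness of $Q$ (only six monomials) restrict the casework substantially, and each face polynomial is either a monomial or a subsystem whose critical equations one can check by the same device as above.
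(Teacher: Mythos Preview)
Your proposal is correct and follows the same approach the paper indicates: the paper's entire argument is the single sentence ``The critical points can be determined by solving the system $Q=0$ and $\nabla Q = \lambda \nabla h_{\rhat}$'' followed by the statement and a $\Box$, so you have in fact supplied considerably more detail than the paper does. Your computations in steps~1 and~2 (the reduction to the diagonal, the factorization $27z^4 - 4z + 1 = (3z-1)^2(3z^2+2z+1)$, and the Hessian eigenvalue calculation yielding signature $(1,3)$) are all correct; step~3 is only sketched, but the paper does not spell out the no-critical-points-at-infinity verification either, treating the whole proposition as a routine computation.
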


Let $c_1 = h_{\rhat} (\zz_*) = \log 81$ and $c_2 = h_{\rhat} (\ww) 
= h_{\rhat} (\ww') = \log 9$.  Generators for the rank-2 homology group
$H_3 (\sing_{\leq c_2} , \sing_{\leq c_2 - \ee})$ are given by the 
unstable manifold for downward gradient flows at $\ww$ and $\ww'$
respectively; denote these chains by $\gamma$ and $\gamma'$.  
The conclusion of Theorem~\ref{th:lacuna} in this case is that
$$a_{\rr} = \int_{\Gamma} \zz^{-\rr} F(\zz) \frac{d\zz}{\zz}$$
where $\Gamma \in \sing_{< c_1}$.  By Proposition~\ref{pr:morse decomp},
$\Gamma = \nint \leray \gamma + \nint' \leray \gamma'$ in 
$H_3 (\sing_*)$ for some integers $\nint$ and $\nint'$, which
must be equal because the coefficients are real.  Explicit formulas
in~\cite[Section~9.5]{PW-book} evaluate $\int_\gamma \zz^{-\rr} F d\zz / \zz$,
which, after adding the complex conjugate, lead to the result in
Theorem~\ref{th:zeta} with~3 replaced by $\nint$.  To prove 
Theorem~\ref{th:zeta}, it remains to determine the integer $\nint$.

\subsubsection*{D-finite Asymptotics and Connection Coefficients}

A univariate complex function $f(z)$ is called \emph{D-finite} 
if it satisfies a linear differential equation with polynomial coefficients,
\begin{equation} \label{eq:Dfin} 
p_r(z)f^{(r)}(z) + p_{r-1}(z)f^{(r-1)}(z) + \cdots + p_0(z) f(z) = 0  \, ,
\end{equation}
where $p_r(z) \not\equiv 0$.  We call such a linear differential equation 
with polynomial coefficients a {\bf D-finite} equation.  Our approach 
to determining $\nint$ relies on the fact that the diagonal of 
a rational function is D-finite~\cite{Christol1984,lipshitz-diagonal}, 
and that asymptotics of D-finite function power series coefficients 
can be determined up to constants which can be rigorously approximated 
to large accuracy.  In general it is not possible to determine 
these constants exactly without additional information (in fact, 
there does not even exist a good characterization of what numbers 
appear as such constants) but knowing asymptotics of $a_{n,n,n,n}$ 
up to an \emph{integer} allows us to immediately determine the value 
of $\nint$.

The process of determining an annihilating D-finite equation of the diagonal of a rational function lies in the domain of \emph{creative telescoping}, a well developed area of computer algebra. In particular, there are popular packages in MAGMA~\cite{Lairez2016} and Mathematica~\cite{Koutschan2010b} which take a multivariate rational function and return an annihilating D-finite equation. For the running example of this section, the diagonal $f(z) = \sum_{n \geq 0}a_{n,n,n,n}z^n$ satisfies the linear differential equation
\begin{equation} 
z^2(81z^2+14z+1)f^{(3)}(z) + 3z(162z^2+21z+1)f^{(2)}(z) + (21z+1)(27z+1)f'(z) + 3(27z+1)f(z)=0.
\label{eq:ODE}
\end{equation}
The following standard results on the analysis of D-finite functions 
can be found in 
Flajolet and Sedgewick~\cite[Section VII. 9]{flajolet-sedgewick-anacomb}.
\begin{itemize}
\item The solutions of a D-finite equation form a $\C$-vector space, 
here equal to dimension three. 
\item A solution of~\eqref{eq:ODE} can only have a singularity when 
the leading polynomial coefficient $z^2(81z^2+14z+1)$ vanishes. 
Here the roots are 0, $\zeta^4$, and its algebraic conjugate 
$\overline{\zeta}^4$, where $\zeta$ is the complex number appearing in 
the coordinates of the critical point $c_2$.
\item Equation~\eqref{eq:ODE} is a \emph{Fuchsian} differential equation, 
meaning its solutions have only regular singular points, and its indicial
equation has rational roots.  Because of this, 
at any point $\omega \in \C$, including potentially singularities, 
any solution of~\eqref{eq:ODE} has an expansion of the form 
\begin{equation} (1-z/\omega)^{\alpha} \sum_{j=0}^d {\Big(}
   g_j(1-z/\omega) \log^j(1-z/\omega){\Big)} \label{eq:singExp} 
\end{equation}
in a disk centered at $\omega$ with a line from $\omega$ to the boundary of 
the disk removed, where $\alpha$ is rational and each $g_j$ are analytic. 
At any algebraic point $z=\omega$ there are effective algorithms to 
determine initial terms of the expansion~\eqref{eq:singExp} for a 
basis of the vector space of solutions of~\eqref{eq:ODE}.
\item If $g(z)=\sum_{n \geq 0} c_nz^n$ is a solution of~\eqref{eq:ODE} 
which has no singularity in some disk $|z| < \rho$ except at a point $z=\omega$,
and $g(z)$ has an expansion~\eqref{eq:singExp} in a slit disk near $\omega$
(a disk centered at $\omega$ minus a ray from the center to account for a branch 
cut) then asymptotics of 
$c_n$ are determined by adding asymptotic contributions of the terms 
in~\eqref{eq:singExp}. In particular, a term of the form 
$C(1-z/\omega)^{\alpha}\log^r(1-z/\omega)$ with $\alpha \notin\N$ 
gives an asymptotic contribution of 
$\omega^{-n}n^{-\alpha-1}\log^r(n)\frac{C}{\Gamma(-\alpha)}$ to $c_n$. 
Furthermore, if $g(z)$ has a finite number of singularities in a disk
and each has the above form, then one can simply add the asymptotic 
contributions coming 
from each point in the disk to determine asymptotics of $c_n$. 
\end{itemize}

These results, combined with rigorous algorithms for numerical analytic continuation of D-finite functions, allow us to rigorously determine asymptotics. For our example, the Sage ore\_algebra package~\cite{KauersJaroschekJohansson2015} computes a basis of solutions to~\eqref{eq:ODE} whose expansions at the origin begin
\begin{align*}
a_1(z) &= \log(z)^2  \left(\frac{1}{2} - \frac{3z}{2} + \frac{9z^2}{2} + \cdots\right) + \log(z){\Big(}-4z + 18z^2+\cdots {\Big)} + {\Big(}8z^2 - 48z^3 + \cdots{\Big)} \\[+2mm]
a_2(z) &= \log(z){\Big(}1 - 3z +9z^2 + \cdots{\Big)} + {\Big(}- 4z + 18z^2 + \cdots {\Big)} \\[+2mm]
a_3(z) &= 1 - 3z + 9z^2 + \cdots 
\end{align*}
and a basis of solutions to~\eqref{eq:ODE} whose expansions at $z=\zeta^4$ begin
\begin{align*}
b_1(z) &= 1 + \left( \frac{13}{2} + \frac{43\sqrt{2}}{4}i \right)(z-\zeta^4)^2 + \left( \frac{8165}{48} + \frac{943\sqrt{2}}{30}i \right)(z-\zeta^4)^3 + \cdots \\[+2mm]
b_2(z) &= \sqrt{z - \zeta^4} + \left( \frac{13}{3} - \frac{365\sqrt{2}}{96}i \right)(z-\zeta^4)^{3/2} - \left( \frac{7071}{1024} - \frac{1041\sqrt{2}}{32}i \right)(z-\zeta^4)^{5/2} + \cdots \\[+2mm]
b_3(z) &= (z-\zeta^4) + \left( \frac{17}{3} - \frac{31\sqrt{2}}{6}i \right)(z-\zeta^4)^2 - \left( \frac{1013}{72} + \frac{1805\sqrt{2}}{36}i \right)(z-\zeta^4)^3  + \cdots.
\end{align*}

Because we can compute the power series coefficients of the diagonal generating function $f(z)$ at the origin, we can represent $f(z)$ in this $a_j(z)$ basis. In fact, because $f(z)$ is analytic at the origin it must be a multiple of $a_3(z)$ and examining constant terms shows that $a_3(z)=f(z)$. Because the coefficients of $f(z)$ grow, it must admit a singularity at $z=\zeta^4$ or $z=\overline{\zeta}^4$ (in fact, we can deduce it will have a singularity at both because we already know its dominant asymptotic behaviour). If we can determine $f(z)$ in terms of the $b_j(z)$ basis then we will know its expansion in a neighbourhood of the origin, and therefore be able to determine asymptotics of its coefficients. Thus, we need to solve a \emph{connection problem}, representing a function given by a basis specified by local information at one point in terms of a basis specified by local information at another point. 

To do this it is sufficient to determine the change of basis matrix converting from the $a_j(z)$ basis into the $b_j(z)$ basis. Using algorithms going back to the Chudnovsky brothers~\cite{ChudnovskyChudnovsky1986,ChudnovskyChudnovsky1987} and van der Hoeven~\cite{Hoeven2001}, and recently improved and implemented by Mezarobba~\cite{Mezzarobba2016,Mezzarobba2019}, we can compute this change of basis matrix numerically to any specified precision. The key is to use numeric analytic continuation to evaluate the $a_j(z)$ and $b_j(z)$ to sufficiently high precision near a fixed value of $z$. Such evaluations can be done using the series expansions around each point (which can be computed efficiently) and rigorous bounds on the error of series truncation~\cite{MezzarobbaSalvy2010}. 

In particular, computing the change of basis matrix in this example using the Sage implementation of Mezzarobba gives
\[ f(z) = a_3(z) = C_1b_1(z) + C_2b_2(z) + C_3b_3(z), \]
where $C_1,C_2,$ and $C_3$ are constants which can be rigorously computed to 1000 decimal places in under 10 seconds on a modern laptop. As $b_2(z)$ is the only element of the $b_j(z)$ basis which is singular at $z=c_2$, the dominant singular term in the expansion of $f(z)$ near $z=c_2$ is
\[ C_2 \sqrt{z-\zeta^4} = -{\Big(}\left(3.5933098558743233\ldots{\Big)} + i{\Big(}0.38132214909311386\ldots{\Big)}\right)\sqrt{z-\zeta^4}. \]
Thus, $f(z)$ has a singularity at $z=\zeta^4$ and the asymptotic contribution of this singularity to $a_{n,n,n,n}$ is
\[ \Psi_1(n) := \frac{\left(4i\sqrt{2}-7 \right)^n}{n^{3/2}} \, \frac{{\Big(}\left(0.543449606382202\ldots{\Big)} + i{\Big(}0.259547320313100\ldots{\Big)}\right)}{\sqrt{\pi}} + O(9^nn^{-5/2}). \]
Repeating the same analysis at the point $z=\overline{\zeta}^4$ gives an asymptotic contribution
\[ \Psi_2(n) := \frac{\left(4i\sqrt{2}+7 \right)^n}{n^{3/2}} \, \frac{{\Big(}\left(0.543449606382202\ldots{\Big)} - i{\Big(}0.259547320313100\ldots{\Big)}\right)}{\sqrt{\pi}} + O(9^nn^{-5/2}), \]
so that $a_{n,n,n,n}$ has the asymptotic expansion $a_{n,n,n,n} = \Psi_1(n) + \Psi_2(n)$.

Comparing this expansion, with numerical coefficients known to 
1000 decimal places, to the expansion in~\eqref{eq:anLambda} 
which has constants that are unknown but restricted to be integers, 
proves that $\lambda_2=\lambda_3$ are integers equal to $2.99\dots$ 
up to almost 1000 decimal places (almost 1000 decimal places more 
than needed to make this conclusion), meaning $\nint = 3$.
This finishes the proof of Theorem~\ref{th:zeta}. 
$\Cox$

\section{Concluding remarks}

\subsection*{Explaining the multiplicity}

We have seen that the integral over $\CC(c_2)$ and $\CC(c_3)$ appear 
in the Cauchy integral representation of $a_{n,n,n,n}$ with a multiplicity 
of~3.  Expanding a torus past a smooth critical point leads to a 
coefficient of~1 when the critical point is a height maximum along
the imaginary fiber and zero when it is a height minimum along this 
fiber.  Evidently, when deforming the Cauchy domain of integration 
past the highest critical point $c_1=(1/3,1/3,1/3,1/3)$, the 
resulting chain $\Gamma$ lying just below this height is not
like a simple torus and instead, under gradient flow, has 
multiplicity~3 in the local homology basis at the diagonal
points $\zeta$ and $\overline{\zeta}$.  

\begin{problem}
Give a direct demonstration of these coefficients being~3.
\end{problem}

Our best explanation at present is this.  If $W$ is a smooth
algebraic hypersurface, Morse theory gives us a basis for 
$H_{d-1} (W)$ consisting of the unsable manifolds for downward
gradient flow at each critical point.  The stable manifolds
at each critical point are an upper tringular dual to this
via the intersection pairing.  The original torus of integration
is a tube over a torus $T_0$ in $\sing$.  If $\sing$ were smooth, 
we would be trying to show that the stable manifold at $\ww$ in 
$\sing_*$ intersects $T_0$ with signed multiplicity~$\pm 3$, 
where $\ww = (\zeta, \zeta, \zeta, \zeta)$.  
This is probably not true in the smooth varieties $\sing_c$. 
However, as $c \to 0$, part of the stable manifold at $\ww$
gets drawn toward $\zz_* = (1/3, 1/3, 1/3, 1/3)$.  Therefore,
in the limit, we need to check how many total signed paths
in the gradient field ascend from $\ww$ to $\zz_*$.

By the symmetry, we expect to find these paths along the three partial 
diagonals: $\{ x=y, z=w \}$, $\{ x=z, y=w \}$ and $\{ x=w, y=z \}$. 
Solving for gradient ascents on any one of these yields three that
go to $\zz$ rather than to the coordinate planes.  If these all had 
the same sign, the multiplicity would be~9 rather than~3, therefore, 
in any one partial diagonal, the three paths are two of one sign and
one of the other.  It remains to show that the signs are as predicted,
that these are the only paths going from $\ww$ to $\zz_*$, and to
rigorize passage from the smooth case to the limit as $c \to 0$.

\subsection*{Computational Morse theory}

One of the central problems in ACSV is effective computation of
coefficients in integer homology.  Specifically, the class
$[T] \in H_d (\M)$ must be resolved as an integer combination
of classes $\leray \sigma$ where $\sigma \in H_{d-1} (\sing_*)$
projects to a homology generator for one of the attachment
pairs $H_{d-1} (\sing_{\leq c} , \sing_{\leq c - \ee})$ near
a critical point with critical value $c$.  What is known
is nonconstructive.  There is a highest critical value $c$ where
$[T]$ has nonzero homology in the attachment pair.  The
projection of $[T]$ to $H_{d-1} (\sing_{\leq c} , \sing_{\leq c - \ee})$ 
is well defined.  If this relative homology element is the projection
of an absolute homology element $\sigma \in H_{d-1} (\sing_{\leq c} 
\setminus \sing_{\leq c - \ee})$ then there is no {\em Stokes phenomenon},
meaning one can replace $[T]$ by $[T] - \sigma$ and continue to the
next lower attachment pair where $[T] - \sigma$ projects to a nonzero
homology element.

The data for this problem is algebraic.  Therefore, one might hope for 
an algebraic solution, which can be found via computer algebra without
resorting to numerical methods, rigorous or otherwise.  At present,
however, we have only heuristic geometric arguments.

\begin{problem}
Given an integer polynomial and rational $\rhat$, algebraically 
compute the highest critical points $\zz$ for which the projection of $[T]$
to the attachment pair is nonzero.  Then compute these integer 
coefficients.  Also determine whether $T$ is homologous to a local cycle, 
and in the case that it is, find a way to continue the computation 
to the next lower critical point.
\end{problem}

\subsection*{Combining computation and topology}

One of the main achievements of the present paper is the preceding
chain of reasoning that combines topological methods with computer
algebra.  Computer algebra methods give asymptotic formulae for
the diagonal coefficients which includes an unknown constant, computable
up to an arbitrarily small (rigorous) error term.  These methods 
say nothing about the behavior of coefficients in a neighborhood of
the diagonal.  Topological methods show that in a neighborhood of
the diagonal, coefficients are given by an asymptotic formula which
is the sum of algebraic quantities up to unknown integer factors.
This method on its own cannot identify the correct asymptotics
without further geometric methods that have, thus far, eluded us.
Combining the two analyses determines the integer factors, leading
to rigorous asymptotics throughout an open cone containing the 
diagonal direction.

\bibliographystyle{alpha} 
\bibliography{bibl}

\end{document}